\providecommand{\noopsort}[1]{}
\numberwithin{equation}{subsection}
\theoremstyle{definition} 
 \newtheorem{definition}{Definition}[section]
 \newtheorem{remark}[definition]{Remark}
\newtheorem{question}[definition]{Question}
\theoremstyle{plain}      
 \newtheorem{proposition}[definition]{Proposition}
 \newtheorem{theorem}[definition]{Theorem}
 \newtheorem{corollary}[definition]{Corollary}
 \newtheorem{lemma}[definition]{Lemma}
\newcommand*{\house}[1]{
  \mathord{
    \mathpalette\@house{#1}
  }
}
\newcommand*{\@house}[2]{
  \dimen@=\fontdimen8 %
      \ifx#1\scriptscriptstyle\scriptscriptfont
      \else\ifx#1\scriptstyle\scriptfont
      \else\textfont\fi\fi
      3 %
  \sbox0{%
    $#1%
      \vrule width\dimen@\relax
      \overline{%
        \kern2\dimen@
        \begingroup % to keep changes of \dimen@ in #2 local
          #2%
        \endgroup
        \kern2\dimen@
      }
      \vrule width\dimen@\relax
      \mathsurround=1.5\dimen@ % outside margin
    $
  }
  \ht0=\dimexpr\ht0-\dimen@\relax
  \dp0=\dimexpr\dp0+2\dimen@\relax
  \vbox{
    \kern\dimen@ 
    \copy0 
  }
}
\def\11{{\mathbf 1}}
\def\l{\lambda}
\def\m{\mu}
\def\a{\alpha}
\def\b{\beta}
\def\g{\gamma}
\def\d{\delta}
\def\e{\epsilon}
\def\O{\Omega}
\def\t{\theta}
\theoremstyle{remark}
\def \a{{\alpha}}
\def \b{{\beta}}
\def \D{{\Delta}}
\def \d{{\delta}}
\def \e{{\varepsilon}}
\def \g{{\gamma}}
\def \k{{\kappa}}
\def \l{{\lambda}}
\def \O{{\Omega}}
\def \p{{\varphi}}
\def \t{{\vartheta}}
\def \m{{\mu}}
\def \s{{\sigma}}
\def \E{{\bf E}\, }
\def \P{{\bf P}}
\def \qq{{\qquad}}
\def \R{{\bf R}}
\def \Z{{\bf Z}}
\def \dd{{\rm d}}
\def \noi{{\noindent}}
\def\E{{\mathbb E \,}}
\def\P{{\mathbb P}}
\def\R{{\mathbb R}}
\def\Z{{\mathbb Z}}
\numberwithin{equation}{subsection}
\author{Michel Weber$\mbox{}^{\dag}$}
\thanks{}
\address{$\mbox{}^{\dag}$
IRMA, 10  Rue du G\'en\'eral-Zimmer, 67084 Strasbourg,  Cedex, France. 
}
\email{michel.weber@math.unistra.fr}
\urladdr{}
\title[$\textit{Critical probabilistic and arithmetical characteristics  of the  Cram\'er  model for primes}$]
{Critical probabilistic characteristics of the  Cram\'er    model for primes
 and     arithmetical   properties}
\begin{document}

%\title{$\textit{Critical probabilistic and arithmetical characteristics  of the  Cram\'er  model for primes}$}

\begin{abstract}  
This work is a probabilistic study of the \lq primes\rq~ of the Cram\'er model, which  consists with     
sums $S_n =\sum_{i=3}^n \xi_i$, $n\ge 3$, where $\xi_i$ are     independent   random variables   such that $\P\{\xi_i= 1\}= 1-\P\{\xi_i= 1\}=1/{\log i}$, $i\ge3$.  %We  first  show that  even on assuming Riemann Hypothesis (RH), the probability  $\P \{S_n\  \text{prime}  \}$
%   cannot be estimated.
    We   prove 
    %without assuming the RH, 
    that there exists a set    of integers $\mathcal S $  of density 1  such that 
\begin{equation}\label{abs.3}   \liminf_{  \mathcal S\ni n\to\infty} (\log n)\P \{S_n\ \hbox{prime}  \}
\ge  \frac{1}{\sqrt{2\pi e}\, } ,
  \end{equation}
and that for $b>\frac12$,  the formula 
\begin{equation}\label{abs.4}  
 \P \{S_n\ \text{prime}\, \}
\, =\, \frac{ (1+ o( 1) )}{ \sqrt{2\pi  B_n  } } 
\int_{m_n-\sqrt{ 2bB_n\log n}}^{m_n+\sqrt{ 2bB_n\log n}}
  \,  e^{- \frac{(t- m_n)^2}{    2  B_n   } }\, \dd \pi(t),  \end{equation} 
in which        $m_n=\E S_n,B_n={\rm Var }\,S_n$,
  holds true  for all $n\in \mathcal S$, $n\to \infty$.
\vskip 1 pt
  Further we prove that for any $0<\eta<1$, and all $n$ large enough  and      $ \zeta_0\le  \zeta\le \exp\big\{ \frac{c\log n}{\log\log n}\big\}$,   letting $S'_n= \sum_{j= 8}^n \xi_j$, 
\begin{eqnarray*}  \P\big\{  S'_n\hbox{\ $\zeta$-quasiprime}\big\}   \,\ge  \, (1-\eta)   \frac{ e^{-\gamma} }{  \log
\zeta }, \end{eqnarray*}
according to Pintz's terminology, where  $c>0$  and   $\g$ is  Euler's constant.  We also  test which   infinite sequences of primes    are ultimately avoided  by   the  \lq primes\rq~ of the Cram\'er model, with probability 1.

\vskip 2 pt Moreover we  show that the Cram\'er model has incidences on     the Prime Number Theorem, since 
it  predicts that the error term is sensitive to   subsequences.
We 
%investigate the small oscillations problem   and 
obtain sharp results on   the length
  and  the number of occurences
  of  intervals $I$ such as  for some $z>0$,
\begin{equation}\label{abs.2} \sup_{n\in I} \frac{|S_n-m_n|}{ \sqrt{B_n}}\le z, 
\end{equation}
which are tied with   the spectrum of the Sturm-Liouville equation.

    \end{abstract}

\maketitle

\vspace{0.7cm}

\noindent
Keywords:
\keywords{Cram\'er's model, Riemann Hypothesis, gap between primes, primes, divisors, quasi-prime, subsequences, 
  probabilistic models.}
\vspace{0.5cm}

\noindent
2020 Mathematics Subject Classification:
Primary 11A25, 11N05;     Secondary 11B83.

%Primary 28D 30D35 (30D15) ; Secondary  11M06, 37A45.

%Primary 28D05, 11K41, Secondary 11K55,
% 11L20, 12D99, 
%28D99, 47A35.

\tableofcontents  

%%%%%%%%%%%%
\section{Introduction.}\label{s1}
\numberwithin{equation}{section} \numberwithin{figure}{section}
 
 Let $\mathcal P=\{p_i,i\ge 1\}$ denote   the sequence of consecutive prime numbers. Cram\'er's probabilistic model basically consists with  a sequence of independent random variables $\xi_i$, defined for 
$i\ge 3$ by
\begin{equation}\label{C.xin}\P\{\xi_i= 1\}= \frac1{\log i}, \qq \quad \P\{\xi_i= 0\}= 1-\frac1{\log i}.
\end{equation}
  
 \vskip 3 pt This
 %e present
  work 
  is   a probabilistic   study 
 of the \lq primes\rq\, of the Cram\'er model, most of the results   obtained have an easy arithmetical interpretation. 
 We show 
   that the  Cram\'er  \lq primes\rq\,   are 
    contained in the set of  \lq primes\rq\, of  the Bernoulli model. 
   This is applied
  to test which infinite sequences of primes   are with probability 1, ultimately avoided  by the \lq primes\rq\, of the model.
 We further thoroughly study the probability $ \P \{S_n\, \text{prime}\, \}$ and $\P \{  S_n\hbox{\ $\zeta$-quasiprime} \}$ (sections \ref{7}, \ref{5}).      
  \vskip 3 pt 
 We also  describe    new results of different type. Some preliminary facts,  at first   it is  easy to check that for this model, the standard limit theorems from probability theory are fulfilled:  the strong law of large numbers (SLLN), the  central limit theorem (CLT), the law of the iterated logarithm (LIL), the local limit theorem (LLT),   and also an invariance principle (IP) hold (Proposition \ref{cramer.IP}). This is true in a wider setting. These points are briefly detailed and completed in Appendix \ref{appendix-1}, which also contains  a sharp estimate of the characteristic function of $S_n =\sum_{i=3}^n \xi_i$  and the  value-distribution description of the divisors of $S_n$. 
Let $\mathcal C=\{ j\ge 3\!:\,\xi_j=1\}$. Note that obviously $S_x=\#\big\{ \nu\in \mathcal C \!:\nu\le x\big\}$  for all  reals $x\ge 3$. In particular, the LIL implies that 
\begin{equation}\label{cramer.pnt}
 \#\big\{ \nu\in \mathcal C : \nu\le x\big\} = \int_2^x \frac{\dd t}{\log t} + \mathcal O\big( \sqrt{x\log\log x}\big) ,
\end{equation}
with probability one. 

\vskip 3 pt 
Thus by \eqref{cramer.pnt},
\begin{equation}\label{cramer.pnt.1}
 \#\big\{ \nu\in \mathcal C : \nu\le x\big\}  = \int_2^x \frac{\dd t}{\log t} + \mathcal O_\e\big(  x^{\frac12 +\e}\big),
\end{equation}
with probability one.
The same result for the prime sequence $\mathcal P$ is equivalent to the Riemann Hypothesis (RH). 
\vskip 2 pt
The LIL 
is   a consequence of Kolmogorov's LIL, and yields the more precise result
\begin{equation}\label{cramer.pnt.LIL}
\limsup_{x\to \infty}
\frac{ \#\big\{ \nu\in \mathcal C : \nu\le x\big\} -\int_2^x \frac{\dd t}{\log t}}{ \sqrt{2 \,\big(\frac{ x}{\log x}\big)  \log\log x }} \,=\, 1,
\end{equation} 
with probability one.
\vskip 10 pt 
  We 
  question  the analogy made with \eqref{cramer.pnt.1} and prove   that   this model  possesses    finer tied properties, enlighting the above analogy  somehow differently.
 We notably prove that if $x$ runs along any increasing subsequence of integers $\mathcal N$, 
  \begin{equation}\label{cramer.pnt1a} 
\#\big\{ \nu\in \mathcal C : \nu\le x\big\} = \int_2^x \frac{\dd t}{\log t} + \mathcal O\big( \sqrt{x }\,\p_{\mathcal N}(x)\big),
\end{equation}
with probability one.  And we may have that $\p_{\mathcal N}(x)= o(\sqrt{\log\log x})$, in fact $\p_{\mathcal N}(x)$ can be as slow as desired, along  a  suitable subsequence $\mathcal N$.
 \vskip 3pt 

  \vskip 3pt 
Thus \eqref{cramer.pnt1a} implies that \lq the prime number theorem\rq\, in Cram\'er's model is sensitive to the subsequence on which $x$ is running, which seems not corroborated with any existing result concerning the counting function $\pi(x)$.  

\vskip 3pt 

  \vskip 3pt

       In the limiting  case $\p_{\mathcal N}(x)\!\equiv{\rm Const} $, we      study      the number of occurences  and the length
   of     intervals $I$   for which
\begin{equation}\label{cramer.small.ampl.} \sup_{n\in I} \frac{|S_n-\E S_n|}{ \sqrt{{\rm Var} (S_n)}}\le z,
\end{equation}
  $z$ being some positive real.   Such a property, namely the maximal duration of small amplitudes of $S_n$ around $\E S_n$,   is quite sensitive to the value taken by $z$, and   turns up to be tied with   the spectrum of the Sturm-Liouville equation. We obtain  in Theorems \ref{cramer.small.ampl.t1}, \ref{cramer.frequencies.t1} quite sharp results.  The proofs combine the IP with   small local oscillation   results   of  the Ornstein-Uhlenbeck process.   
      
\vskip 10 pt   Write  
$\mathcal C=\{P_j,j\ge 1\}$, where $P_j$ are  the instants  of jumps  
of the random walk  $\{S_n , n\ge 1\}$,  
which are  recursively defined as follows, 
  \begin{equation}\label{C.Pn} P_1= \inf\{ n\ge 3: S_n=1\},\qq   P_{\nu +1} =\inf\{ n>P_\nu: S_n=1\}  \qq \quad \nu\ge 1  .
\end{equation}
  The main characteristic of Cram\'er's model is that heuristically 
   $\mathcal C $
  should imitate  well the sequence $\mathcal P$. He proved that with probability one, one has
\begin{equation}\label{C.Pngap} \limsup_{\nu \to \infty}\, \frac{P_{\nu +1}-P_{\nu}}{\log^2 P_{\nu }} =1 .\end{equation} 
   On the basis of this result, he wrote in \cite{C1} p.\,28, \lq\lq Obviously we may take this as a suggestion that, for the particular sequence of ordinary prime numbers $p_n$, some similar relation may hold.\rq\rq
\, 
He conjectured (Cram\'er's conjecture) that for some positive constant $c$,
\begin{equation}\label{C.pngap} \limsup_{\nu \to \infty}\, \frac{p_{\nu +1}-p_{\nu}}{\log^2 p_{\nu }} =c .\end{equation}
    
    The almost sure limit  result \eqref{C.Pngap} has  no arithmetical content, as  it is 
     purely probabilistic.
    Further the sequence of differences $\{P_{\nu +1}-P_{\nu},\nu\ge 1\}$ is a sequence of independent random variables, which is a very strong property. 
Impressive numerical evidences (up to $10^{18}$) of \eqref{C.pngap} are given  \cite{N}, see also \cite{G}, but depending on the scale of the observed phenomenon, $10^{18}$ might be a very little number (at least in the fast-growing hierarchy of   numbers), and paraphrasing Odlyzko's note \cite{O},  that conjecture, if true, can be just barely true. On the other hand, if the conjecture were true, no real singularity should  appear, in other words  the observed phenomenon is being from the beginning locally \lq similar\rq. If so, one may wonder what could be a reason. %This is   simply %rather
% a probabilist reflexion\ldots on a  conjecture made from a probabilistic system.
 We refer the reader to \cite{C2}, \cite{C1}, \cite{G}, \cite{P}, \cite{Se} notably, for results conforting or contrary to
the Cram\'er conjecture,  which nowadays  still 
 appears as a   mathematical  \lq spell\rq.

     \vskip 7 pt Cram\'er's model   does not assert that there are any primes in the sequence $\mathcal C$, and variants of this model either. An important question, apparently  overlooked in the related literature, thus concerns the possible primality of $S_n$, namely the study of the probability $\P \{S_n\  \text{prime}  \}$, prior to   the one of probability $\P \{P_{\nu}\  \text{prime}  \}$. The LIL (for instance) shows that   such a property is tightly related to the distribution of primes in small intervals,    making  thereby vain the hope of obtaining definitive results, 
        even on assuming RH.  Let   $m_n=\E S_n=\sum_{j=3}^n \frac1{\log j}$, $B_n={\rm Var }S_n= \sum_{j=3}^n \frac1{\log j}(1-\frac1{\log j})$. Let $b>1$.  The intervals 
$$ I_n=[m_n-\sqrt{2bB_n\log\log  n},m_n+\sqrt{2bB_n\log\log  n}],$$
are no longer overlapping as soon as $n$ runs along very moderated growing subsequences. Along such a subsequence $S_n$ can be prime, $n$ large, only if   $I_n$ contains a prime number. These intervals are of type
$$ [x-c\,x^{\frac{1}{2}}(\log\log  x)^{\frac{1}{2}}, x], $$
 for some $c>0$. It is at present quite out of reach, even on assuming the validity of the RH, 
to  decide for which $x$, 
 such an interval contains a prime number. One can also makes the similar observation from the sharpened version of the local limit theorem given in Proposition \ref{lltsharp[cramer]}.
 
  \vskip 3pt Thus we are in a case where we have a model predicting largest size of gaps between primes, whereas, even on  RH, we could not know whether  the \lq primes\rq\, of the model   are prime.  Recall some results on primes in small intervals. Assuming the RH, the best result known to us states as follows,
\begin{equation}\label{RH.pi}\pi(x)-\pi(x-y)\, =\, \int_{x-y}^x \frac{\dd t}{\log t}+ 
\mathcal O\Big( x^{\frac{1}{2}}\log \Big\{ \frac{y}{x^{\frac{1}{2}}\log x}\Big\}\Big)
\end{equation}
for $y$ in the range 
$ 2x^{\frac{1}{2}}\log x \le y\le x$. 
Thus for $M\ge 2$ fixed,
\begin{equation}\pi(x)-\pi(x-Mx^{\frac{1}{2}}\log x)
\sim\,  x^{\frac{1}{2}} \big\{M+ 
\mathcal O\big(  \log M\big)\big\}.\end{equation}
See Heath-Brown \cite{HB},  see also the recent paper \cite{HB1} and the references therein. Without assuming RH, Heath-Brown proved in \cite{HB} that if $\e(x)>0$, $\e(x)\to 0$ as $x\to\infty$, then 
\begin{equation}\pi(x)-\pi(x-y)\, =\,  \frac{y}{\log x}\Big(1+ \mathcal O( \e^4(x)) +\mathcal O\Big(\frac{\log\log x}{\log x}\Big)^4\Big)\end{equation}
for $y$ in the range 
 $  x^{\frac{7}{12}-\e(x)}  \le y\le \frac{x}{(\log x)^4}$. 
This slightly improves Huxley's earlier result in \cite{H} corresponding to $\e(x)=0$. Huxley's   result  shows that the PNT extends to intervals of the type $[x,x+x^\t]$,   $x^{\frac{7}{12} }  \le \t\le \frac{x}{(\log x)^4}$, namely that, 
\begin{equation}\label{huxley} \#\{[x,x+x^\t]\cap \mathcal P\} \sim \frac{x^\t}{ \log x }.
\end{equation} 

\vskip 7pt     We however obtain in Theorem \ref{Snprime.M},  without assuming RH, a  sharp estimate  of $\P \{S_n\ \text{prime}\, \}$, for almost all $n$, namely for all $n$, $n\to\infty$ through a set $\mathcal S$  of natural density $1$. The lower bound,  
\begin{equation}\label{abel.base,intro}  
\liminf_{  \mathcal S\ni n\to\infty}\ (\log n)\P \{S_n\ \hbox{prime}  \}
>0,
   \end{equation}
is also proved. \vskip 7pt
   Further the property for $S_n$ to be $z$-quasiprime
   is investigated.  
We   obtain in  Theorem \ref{cramer.quasi.prime.P} 
  a sharp estimate  of the probability that $S_k$  be $\zeta$-quasiprime,   $k$ large and  for the range of values $ \zeta_0\le  \zeta\le \exp\big\{ c {\log( k)}/{\log\log( k)}\big\}$, $c>0$.

%%%
\section{Main Results.} \label{1a}
%%%

%We now state the main results of the paper. 
It is well-known that the LIL (at least for centered square integrable i.i.d. sums) has slower amplitude than the one given by the classical normalizing factor $\sqrt{2n\log\log n}$, when $n$ is restricted to subsequences. For instance, if  $n$ runs along the subsequence
$\mathcal N=\{2^{2^k}, k\ge 1\}$, then the LIL restricted to $\mathcal N$ holds with normalizing factor
$ \sqrt{ 2n\log \log \log n}$. See \cite{W2} for a  characterization of the LIL for subsequences.
The  same phenomenon holds in fact - with no additional requirement - for the Cram\'er model. 

\begin{theorem}\label{cramer.LIL.sub} Let $\mathcal N$ be any increasing sequence of integers. Then, 
\begin{eqnarray*} \limsup_{\mathcal N\ni j\to \infty}\frac{|S_j- m_j|}{ \sqrt{ B_j}
\,\p_\mathcal N(j)}&=&1, \end{eqnarray*}
almost surely, where function $\p_{\mathcal N}(n)$ is defined in \eqref{cramer.phi.N}.
\end{theorem}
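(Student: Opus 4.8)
The proof follows the classical two–sided pattern of the law of the iterated logarithm; the only genuinely new point is the calibration of two successive extractions against the normalizer $\varphi_{\mathcal N}$ of \eqref{cramer.phi.N}. Recall that $\varphi_{\mathcal N}$ is governed by a \emph{variance–geometric} subsequence $\mathcal M=(m_i)_{i\ge1}\subseteq\mathcal N$, obtained greedily so that $B_{m_{i+1}}\ge\rho\,B_{m_i}$ for a fixed $\rho>1$ (the resulting function being unchanged up to a factor $1+o(1)$ when $\rho$ is varied), $\varphi_{\mathcal N}(n)$ being, up to this equivalence, $\sqrt{2\log i}$ on the $i$-th block $[m_i,m_{i+1})$; in particular $B_{m_i}\ge\rho^{\,i-1}B_{m_1}\to\infty$ geometrically, which is what places all the deviation estimates below in the Gaussian (moderate–deviation) regime. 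The sharp estimate of the characteristic function of $S_n$ proved in Appendix \ref{appendix-1} yields the two–sided tail asymptotics $\P\{\pm(S_N-m_N)\ge t\sqrt{B_N}\}=(1+o(1))\,\overline{\Phi}(t)$ uniformly for $1\le t=o(B_N^{1/6})$, which is all that is needed; for $\mathcal N=\mathbf N$ the construction reproduces \eqref{cramer.pnt.LIL}, which is a useful sanity check.

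\emph{Upper bound $\limsup\le1$.} Fix $\varepsilon>0$ and run the extraction with $\rho=1+\varepsilon$. Since $\{S_k-m_k\}$ is a martingale with uniformly bounded increments and $\E[e^{\theta(S_N-m_N)}]\le\exp\{\tfrac{\theta^2}{2}B_N(1+o(1))\}$ as $\theta\to0$, Doob's maximal inequality applied to the submartingales $e^{\pm\theta(S_k-m_k)}$ and optimized in $\theta$ gives
\[
\P\Big\{\max_{k\le m_{i+1}}|S_k-m_k|\ge (1+\varepsilon)\sqrt{2B_{m_i}\log i}\Big\}\le 2\exp\Big\{-\tfrac{(1+\varepsilon)^2B_{m_i}}{B_{m_{i+1}}}(\log i)(1-o(1))\Big\}\le i^{-(1+\varepsilon/2)}
\]
for $i$ large, using $B_{m_{i+1}}\le(1+\varepsilon)B_{m_i}$. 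These probabilities are summable, so by the first Borel--Cantelli lemma, almost surely $\max_{k\le m_{i+1}}|S_k-m_k|<(1+\varepsilon)\sqrt{2B_{m_i}\log i}$ for all large $i$. Every $j\in\mathcal N$ lies in some block $[m_i,m_{i+1})$, where $B_j\ge B_{m_i}$ and $\varphi_{\mathcal N}(j)\ge(1-o(1))\sqrt{2\log i}$ (there being at least $i$ extracted points $\le j$), whence $|S_j-m_j|\le(1+2\varepsilon)\sqrt{B_j}\,\varphi_{\mathcal N}(j)$ for all large $j\in\mathcal N$, a.s. Letting $\varepsilon\downarrow0$ gives $\limsup\le1$.

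\emph{Lower bound $\limsup\ge1$.} From $\mathcal M$ extract $\mathcal M'=(n_\ell)_{\ell\ge1}=(m_{i_\ell})_\ell$ greedily so that $B_{n_{\ell+1}}\ge \ell\,B_{n_\ell}$. Because $B$ is multiplied by at least $\rho$ at each $\mathcal M$-step, a factor $\ell$ is reached in at most $\lceil\log_\rho\ell\rceil$ steps, so $i_\ell=O(\ell\log\ell)$ and therefore $\varphi_{\mathcal N}(n_\ell)=(1+o(1))\sqrt{2\log\ell}$. The increments $E_\ell:=(S_{n_{\ell+1}}-S_{n_\ell})-(m_{n_{\ell+1}}-m_{n_\ell})$ are independent, centered, with variance $\gamma_\ell:=B_{n_{\ell+1}}-B_{n_\ell}\in[(1-\ell^{-1})B_{n_{\ell+1}},\,B_{n_{\ell+1}})$ and $\gamma_\ell\to\infty$; the moderate–deviation lower bound applied with $t=(1-\varepsilon)\sqrt{2\log i_{\ell+1}}=o(\gamma_\ell^{1/6})$ gives
\[
\P\big\{E_\ell\ge(1-\varepsilon)\sqrt{2\gamma_\ell\log i_{\ell+1}}\big\}\ \ge\ \frac{c\,(1+o(1))}{\sqrt{\log i_{\ell+1}}}\,i_{\ell+1}^{-(1-\varepsilon)^2}\ \gtrsim\ \ell^{-(1-\varepsilon)^2}(\log\ell)^{-(1-\varepsilon)^2-\frac12},
\]
whose series diverges since $(1-\varepsilon)^2<1$. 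As the $E_\ell$ are independent, the second Borel--Cantelli lemma gives $E_\ell\ge(1-\varepsilon)\sqrt{2\gamma_\ell\log i_{\ell+1}}$ for infinitely many $\ell$, a.s. On that (a.s. infinite) set, $S_{n_{\ell+1}}-m_{n_{\ell+1}}=E_\ell+(S_{n_\ell}-m_{n_\ell})$, and by the already-proven a.s. upper bound together with $B_{n_\ell}\le\gamma_\ell/(\ell-1)$ one has $|S_{n_\ell}-m_{n_\ell}|=O(\sqrt{B_{n_\ell}\log i_\ell})=o(\sqrt{\gamma_\ell\log i_{\ell+1}})$; hence $S_{n_{\ell+1}}-m_{n_{\ell+1}}\ge(1-O(\varepsilon))\sqrt{2B_{n_{\ell+1}}\log i_{\ell+1}}=(1-O(\varepsilon))(1+o(1))\sqrt{B_{n_{\ell+1}}}\,\varphi_{\mathcal N}(n_{\ell+1})$ along that subsequence. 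Thus $\limsup\ge1-O(\varepsilon)$ for every $\varepsilon>0$, i.e. $\limsup\ge1$, and the theorem follows.

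\emph{Main obstacle.} The delicate point is the lower bound, precisely the simultaneous control of two competing constraints on $\mathcal M'$: the variance ratios $B_{n_{\ell+1}}/B_{n_\ell}$ must tend to infinity (so that the boundary term $S_{n_\ell}-m_{n_\ell}$ is negligible against the fresh increment $E_\ell$ and $\gamma_\ell\sim B_{n_{\ell+1}}$), yet the $\mathcal M$-indices $i_\ell$ must not be inflated beyond a polylogarithmic factor, for otherwise $\varphi_{\mathcal N}(n_{\ell+1})=\sqrt{2\log i_{\ell+1}}$ would be too large and the Borel--Cantelli series would converge, losing the sharp constant $1$. The rule $B_{n_{\ell+1}}\ge\ell B_{n_\ell}$ combined with the built–in geometric growth $B_{m_{i+1}}\ge\rho B_{m_i}$ of the first extraction reconciles them, forcing $i_\ell=O(\ell\log\ell)$ — exactly tight enough to keep the exponent $(1-\varepsilon)^2$ below $1$. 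A secondary technical item is the validity of the Gaussian moderate–deviation asymptotics for the lattice, non-identically distributed, vanishing–variance summands $\xi_j-\E\xi_j$; this is supplied by the characteristic–function estimate of Appendix \ref{appendix-1}.
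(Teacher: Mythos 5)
You take a genuinely different route from the paper: where the paper invokes the invariance principle of Proposition \ref{cramer.IP} to transfer the statement to a Brownian-motion analogue and then cites estimates (3.22)--(3.24) of \cite{W2}, you attempt a self-contained Kolmogorov-style argument (variance-geometric blocking, exponential maximal inequality, inner extraction plus second Borel--Cantelli). The strategy is sound, but the upper-bound step as written fails. Your greedy extraction guarantees only $B_{m_{i+1}}\ge\rho B_{m_i}$; when $\mathcal N$ has gaps (take $\mathcal N=\{2^{2^k}\}$, where every point is extracted) the ratio $B_{m_{i+1}}/B_{m_i}$ is unbounded, so the inequality $B_{m_{i+1}}\le(1+\varepsilon)B_{m_i}$ that you invoke to pass from $2\exp\{-(1+\varepsilon)^2(B_{m_i}/B_{m_{i+1}})\log i\,(1-o(1))\}$ to $i^{-(1+\varepsilon/2)}$ is simply false, and the Borel--Cantelli series need not converge. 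The repair is standard but has to be made: take the maximum not over $k\le m_{i+1}$ but over $\{k:B_k<\rho B_{m_i}\}$, which by the greediness of the extraction is exactly the set containing every $j\in\mathcal N\cap[m_i,m_{i+1})$; on that set the variance in Doob's bound is $<\rho B_{m_i}$, the exponent becomes $(1+\varepsilon)^2/\rho=1+\varepsilon$, and the series converges.

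A secondary gap is the opening ``Recall'': the paper's $\varphi_{\mathcal N}$ in \eqref{cramer.phi.N} is built from the fixed $n$-geometric intervals $I_k=]M^k,M^{k+1}]$ that meet $\mathcal N$, not from a variance-geometric extraction. The two indexings $p$ (paper) and $i$ (yours) are indeed commensurable -- since $B_n\sim n/\log n$, each $I_k$ spans a bounded number of $B$-ratio-$\rho$ steps and conversely, so $p\asymp i$ and hence $\log p\sim\log i$ -- but this identification is precisely what makes $\varphi_{\mathcal N}(j)=(1+o(1))\sqrt{2\log i}$, and it is asserted without argument even though the entire calibration against $\sqrt{2\log(p+2)}$ rests on it. With these two points repaired your proof goes through and constitutes a valid elementary alternative to the paper's invariance-principle route: you gain self-containedness, at the cost of having to supply directly the Gaussian moderate-deviation lower bound for the independent block increments, which the paper obtains for free once on the Wiener side.
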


Roughly speaking, given $M>1$,  $I_k=]M^k,M^{k+1}]$,
$\p_{\mathcal N}(n)$ is defined as being equal to  $\sqrt{2\log (p+2)}$ if 
$n\in {\mathcal N}\cap I_{\k_p} $,   $I_{\k_p}$ being  the $p$-th   interval    intersecting  $\mathcal N$.

\vskip 10 pt 

In the next Theorems we obtain  very sharp results on        the length, 
   and also the  frequencies of the   intervals $I$ for which \eqref{cramer.small.ampl.} holds, namely 
\begin{equation*} \sup_{n\in I} \frac{|S_n-m_n|}{ \sqrt{B_n}}\le z,
\end{equation*}
$z$ a positive real  (corresponding to  $\p_\mathcal N\equiv{\rm Const}$).

\begin{theorem}\label{cramer.small.ampl.t1}
Let $f:[1,\infty)\to\R^+$ be     a non-decreasing function such that $f(t)\uparrow \infty$   with
$t$  and   $f(t)= o_\rho(t^\rho)$.  There exists a Brownian motion $W$ such that  
$$ \liminf_{k\to \infty}\sup_{e^k\le B_j\le e^kf(e^k)} \frac{|S_j-m_j|}{ 
\sqrt{B_j}} \,=\,\liminf_{k\to \infty}\sup_{e^k\le
B_j\le e^kf(e^k)}\frac{|W(B_j)|}{ \sqrt{B_j}}, 
$$
with probability $1$. 
Let $f_c(t)=  \log^c t $, $c>0$.   Then
$$\liminf_{k\to \infty}\sup_{e^k\le B_j\le e^kf_c(e^k)}\frac{|S_j-m_j|}{ 
\sqrt{B_j}}\le z, 
 $$
with probability $1$, if and only if $c\le 1/\l(z)$. And $\l(z)$ is   the smallest eigenvalue in the Sturm-Liouville
equation 
\begin{equation} \label{sl/e} \psi''(x)-x\psi'(x)= -\l \psi(x), \qq \psi(-z)=\psi(z)=0.
\end{equation}
This is   a positive strictly
decreasing continuous function of $z$ on $]0,\infty[$. Further, 
  \begin{equation} \label{sl/smei}\l(z)\sim \frac{\pi^2}{ 4z^2}\,, \qq\qq {\rm  as}\   z\to 0 .
\end{equation}

\end{theorem}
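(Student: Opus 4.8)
The plan is to transfer the problem from the Cramér sums $S_j$ to a Brownian motion via the invariance principle (Proposition~\ref{cramer.IP}), and then invoke known small-deviation results for the Ornstein--Uhlenbeck process. First I would use the strong approximation to produce a Brownian motion $W$ with $|S_j - m_j - W(B_j)| = o(\sqrt{B_j/\log\log B_j})$ almost surely; since the supremum in the statement is over ranges $e^k \le B_j \le e^k f(e^k)$ with $f$ subpolynomial, the error from the approximation is negligible against $\sqrt{B_j}$ uniformly on each such block, which gives the first displayed identity essentially for free once one checks that the discrete set $\{B_j\}$ is dense enough that replacing $\sup_j$ over $S_j$ by $\sup_j$ over $W(B_j)$ loses nothing (the increments $B_{j+1}-B_j = \frac{1}{\log j}(1 - \frac1{\log j}) \to 0$, so the gaps in the $B_j$-grid are $o(1)$ while the relevant scale of $B_j$ is exponential).

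Next, after the change of variables $B_j = e^u$, the process $u \mapsto e^{-u/2} W(e^u)$ is a stationary Ornstein--Uhlenbeck process $U(u)$, and the event
$$\sup_{e^k \le B_j \le e^k f_c(e^k)} \frac{|W(B_j)|}{\sqrt{B_j}} \le z$$
becomes, up to the negligible discretization, $\sup_{k \le u \le k + c\log k} |U(u)| \le z$. So the question reduces to: for which $c$ does $\liminf_{k\to\infty} \sup_{u\in[k, k+c\log k]} |U(u)| \le z$ hold a.s.? This is a classical small-deviation / Erdős--Rényi-type problem for the OU process. The probability that $U$ stays in $(-z,z)$ for a time-window of length $L$ decays like $e^{-\lambda(z) L}$, where $\lambda(z)$ is the principal eigenvalue of the generator $\frac12\psi'' - \frac{x}{2}\psi'$ on $(-z,z)$ with Dirichlet boundary conditions — equivalently, after rescaling, of \eqref{sl/e}. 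Then a Borel--Cantelli argument along the blocks: with $L_k = c\log k$, the probability of staying in $(-z,z)$ on the $k$-th block is $\asymp k^{-c\lambda(z)}$, and these are (after passing to a sparse subsequence of nearly independent blocks) summable iff $c\lambda(z) > 1$ and non-summable iff $c\lambda(z) \le 1$; the non-summable case together with independence gives infinitely many successes, hence $\liminf \le z$, while the summable case gives $\liminf > z$. This yields the criterion $c \le 1/\lambda(z)$.

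It then remains to record the qualitative properties of $\lambda(z)$. Monotonicity and continuity: $\lambda(z)$ is the bottom of the Dirichlet spectrum of a fixed operator on the interval $(-z,z)$, so by domain monotonicity of eigenvalues it is strictly decreasing in $z$, and continuity follows from standard perturbation theory for the quadratic form (or from the variational characterization $\lambda(z) = \inf \{ \int (\psi')^2 e^{-x^2/2} / \int \psi^2 e^{-x^2/2} : \psi \in H_0^1(-z,z)\}$ up to the constant factor matching \eqref{sl/e}). Positivity is immediate since the interval is bounded. For the asymptotics \eqref{sl/smei} as $z \to 0$: on a tiny interval the drift term $x\psi'$ is lower order compared to $\psi''$, so $\lambda(z)$ is asymptotic to the principal Dirichlet eigenvalue of $-\psi''$ on $(-z,z)$, which is $\pi^2/(2z)^2 = \pi^2/(4z^2)$; this can be made rigorous by sandwiching $\lambda(z)$ between the eigenvalues of $\psi'' \pm z\psi'$-type comparison operators, or directly from the Rayleigh quotient by plugging in the test function $\cos(\pi x/2z)$ and bounding the drift contribution by $O(1)$.

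The main obstacle I expect is the precise small-deviation estimate for the OU process with the correct exponential constant $\lambda(z)$ and, more delicately, making the Borel--Cantelli argument sharp at the boundary $c\lambda(z) = 1$ — one needs not just the exponential rate but enough control on the subexponential prefactor (and on the dependence between overlapping or adjacent time-windows) to decide the borderline case. The cleanest route is to extract a subsequence of disjoint, widely separated blocks to get genuine independence for the lower bound (non-summable direction) and to use the crude union bound over a grid for the upper bound (summable direction), absorbing polynomial-in-$k$ prefactors into an arbitrarily small perturbation of the exponent; the role of the hypothesis $f(t) = o_\rho(t^\rho)$ is exactly to guarantee that $\log f_c(e^k) = c k$ grows only linearly in $k$, so that these prefactors are harmless and the discretization error stays $o(1)$ on the OU time-scale.
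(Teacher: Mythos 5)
Your overall plan — invariance principle via Sakhanenko, pass to the Ornstein--Uhlenbeck process $U(s)=e^{-s/2}W(e^s)$, then reduce to a small-deviation/eigenvalue problem for $U$ on time-windows of length $c\log k$ — is the same as the paper's. The difference is that the paper does not re-derive the Brownian-side facts: both the $0$--$1$ dichotomy and the identity of the two liminfs are quoted directly from \cite{W} (stated here as Theorem~\ref{fr1}, and the transfer via Theorem~1.5 of \cite{W} after \eqref{alpha}), whereas you propose to prove them from scratch. Your route is correct and more self-contained. The identity follows exactly as you say, from $|S_j-m_j-W(B_j)|\le \Upsilon B_j^{\beta}=o(\sqrt{B_j})$ with $\beta<1/2$ — this is precisely the comparison \eqref{control.approx} that the paper itself displays in the proof of Theorem~\ref{cramer.frequencies.t1}. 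The Borel--Cantelli plan for the OU dichotomy also works, and two of the concerns you raise can be dispatched: the exit-time probability $\P\{\sup_{[0,L]}|U|\le z\}$ has a purely exponential tail $\sim C(z)e^{-\mu_1(z)L}$ with a bounded prefactor, so there is no subexponential bookkeeping to worry about at the boundary $c\lambda(z)=1$; and taking a sparse subsequence of disjoint blocks with spacing $\gg\log k$, combined with the exponential decay of correlations of $U$, gives the divergent direction cleanly. The one thing you should not leave behind the phrase ``after rescaling'' is the exact normalization: the generator of $U$ is $\tfrac12\partial_x^2-\tfrac12 x\partial_x$, and its principal Dirichlet eigenvalue $\mu_1(z)$ on $(-z,z)$ is related to the $\lambda(z)$ in \eqref{sl/e} by $\lambda(z)=2\mu_1(z)$. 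Since the threshold in the theorem is $c\le 1/\lambda(z)$, an unresolved factor of $2$ at this step would move the cutoff; you should compute it explicitly. Your treatment of the qualitative properties of $\lambda(z)$ — variational characterization in $L^2(e^{-x^2/2}\,\dd x)$, domain monotonicity, and the test function $\cos(\pi x/2z)$ for the small-$z$ asymptotic \eqref{sl/smei} — is standard and correct.
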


Towards this aim, we prove that Cram\'er's model   satisfies an invariance principle: 
\begin{proposition}[IP]\label{cramer.IP}
 Let $1/\a<\b<1/2$.  There exists   a Brownian motion $W$ such that if 
\begin{equation*}   \Upsilon=\sup_{n}\frac{1}{ B^\b_n} \, \sup_{j\le
n}{|S_j- m_j-W(B_j)| }    
\end{equation*}  
then,  
$\E  \Upsilon^{\a'}\!<\infty$, $0\le \a'<\a$. 
\end{proposition}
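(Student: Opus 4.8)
The plan is to derive the statement from a Hungarian‑type strong approximation for sums of independent, \emph{uniformly bounded} random variables, and then to upgrade the resulting ``fixed‑$n$'' estimate to the global functional $\Upsilon$ by a dyadic decomposition along the geometric scale of the variances.

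First I would centre the walk: with $\n_i=\xi_i-\E\xi_i$ one has $S_j-m_j=\sum_{i=3}^{j}\n_i$, a sum of independent centred variables with $|\n_i|\le 1$ and ${\rm Var}(S_j-m_j)=B_j$, the sequence $B_j$ being strictly increasing, positive for $j\ge 3$, and satisfying $B_j\sim j/\log j\to\infty$. Since the $\n_i$ are bounded, their absolute moments — indeed their exponential moments — are controlled by an absolute constant, so the strong approximation theorem for independent, not necessarily identically distributed, summands (Koml\'os--Major--Tusn\'ady, Sakhanenko; see also Philipp--Stout) applies \emph{for every value of the moment parameter}. It produces, on a suitable probability space carrying $(\xi_i)_{i\ge 3}$, a standard Brownian motion $W$ and absolute constants $C_1,C_2,C_3>0$ with
\begin{equation}\label{IP.tail}
\P\Big(\max_{3\le j\le n}\big|S_j-m_j-W(B_j)\big|>C_1\log(2+B_n)+x\Big)\le C_2\,e^{-C_3 x}\qquad (n\ge 3,\ x>0),
\end{equation}
hence, integrating the tail, $\E\big(\max_{3\le j\le n}|S_j-m_j-W(B_j)|\big)^{q}\le C_q\big(\log(2+B_n)\big)^{q}$ for every $q\ge 1$ (equivalently, Sakhanenko's $L^p$ bound $\|\max_{j\le n}|S_j-m_j-W(B_j)|\|_p\le C\,p\,(\sum_{i\le n}\E|\n_i|^p)^{1/p}\le C\,p\,B_n^{1/p}$ used with $p\asymp\log B_n$ gives the same conclusion).

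Next I would run the blocking argument. For $k\ge 0$ put $N_k=\max\{n:B_n\le e^{k+1}\}$, so that $\log(2+B_{N_k})\le k+2$, and set $D_k=\max_{3\le j\le N_k}|S_j-m_j-W(B_j)|$. Every $n$ with $B_n\ge 1$ lies in a unique band $e^{k}\le B_n<e^{k+1}$, on which $B_n^{-\b}\le e^{-k\b}$ and $\sup_{j\le n}|S_j-m_j-W(B_j)|\le D_k$, while the finitely many $n$ with $B_n<1$ contribute only a random variable $R$ with all moments finite. Thus $\Upsilon^{\a'}\le C'R^{\a'}+C''\sum_{k\ge 0}e^{-k\b\a'}D_k^{\a'}$, and since $\E D_k^{\a'}\le C_{\a'}(k+2)^{\a'}$ by the moment bound above,
\begin{equation}\label{IP.series}
\E\,\Upsilon^{\a'}\ \le\ C'\,\E R^{\a'}+C''C_{\a'}\sum_{k\ge 0}e^{-k\b\a'}(k+2)^{\a'}\ <\ \infty ,
\end{equation}
the series converging because $\b\a'>0$. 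As $\a'<\a$ was arbitrary, this is the Proposition; in fact a single $W$ serves for every $\b\in(0,1/2)$ and every finite $\a'$, the hypothesis $1/\a<\b$ being a conservative one inherited from the general statement of the approximation theorem rather than a constraint used in \eqref{IP.series}.

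The hard part is \eqref{IP.tail}: one needs an approximation error smaller than every fixed positive power of $B_n$, \emph{uniformly in $n$}, with tail constants independent of $n$ — precisely what the Hungarian construction provides and what keeps $\E D_k^{\a'}$ polynomial in $k$. Should one wish to bypass that machinery, the elementary substitute is the term‑by‑term Skorokhod embedding $S_j-m_j=W(\tau_j)$ with $\tau_j=\sum_{i\le j}T_i$, the $T_i$ \emph{independent} and $T_i$ distributed as the exit time of $W$ from the interval $(-a,b)$ with $a=\tfrac1{\log i}$, $b=1-\tfrac1{\log i}$. Since the $q$‑th moment of such an exit time is a polynomial in $(a,b)$ vanishing at $a=0$, hence $\le C_q\,a$ for bounded $a,b$, one gets $\E T_i^{q}\le C_q/\log i\le C_q'\,\s_i^2$ for each $q$, so $\E\big(\sum_{i\le j}(T_i-\s_i^2)\big)^{2r}\le C_r B_j^{\,r}$ by independence, i.e.\ $|\tau_j-B_j|=O(B_j^{1/2}\log B_j)$ a.s. (with matching $L^{2r}$ bounds); L\'evy's modulus of continuity for $W$ then converts this into $|S_j-m_j-W(B_j)|=O(B_j^{1/4}\log B_j)$, which, fed into \eqref{IP.series}, proves the statement for $\b\in(1/4,1/2)$ — enough for its use in Theorems \ref{cramer.small.ampl.t1} and \ref{cramer.frequencies.t1}. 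The only non‑routine points of this alternative are the exit‑time moment computation and the modulus‑of‑continuity estimate, both of which exploit that the embedding intervals, while of width $\asymp 1$, are extremely lopsided.
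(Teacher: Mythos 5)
Your proof is correct, but it takes a genuinely different route from the paper's. The paper applies Sakhanenko's truncated--power--moment strong invariance principle (Lemma \ref{sha1}, from \cite{S}) directly to the weighted global supremum: taking the weight $r_j=B_j^{\beta}$ and a \emph{fixed} exponent $\alpha$ with $\alpha\beta>1$ makes the Lyapunov-type series $\sum_j \E|X_j|^{\alpha}\,B_j^{-\alpha\beta}$ converge, and the lemma yields $\P\{\Upsilon\ge C\alpha x\}\le C_{\alpha}x^{-\alpha}$ in one stroke; the moment claim is then Chebyshev. You instead invoke a sharper, exponential-tail strong approximation (the exponential-moment variant due to Sakhanenko, 1984 --- a \emph{different} theorem from the truncated-moment result \cite{S} that the paper cites), obtaining a fixed-$n$ error of size $\log(2+B_n)$ with Gaussian-type tails, and you then globalize by a dyadic blocking along the geometric scale of $B_n$. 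This buys a strictly stronger conclusion: your argument gives $\E\Upsilon^{q}<\infty$ for \emph{every} finite $q$ and every $\beta\in(0,1/2)$, not merely $q<\alpha$ with $\beta>1/\alpha$. Your closing remark is right in spirit but slightly misplaced: the hypothesis $1/\alpha<\beta$ is not a generic artefact of ``the approximation theorem'' but is precisely the convergence condition that the paper's polynomial-tail Sakhanenko--Chebyshev route requires; your route simply dispenses with it. Two cautions: the citation for your exponential tail bound should be the exponential-moment invariance principle and not \cite{S}, which only produces polynomial tails of order $x^{-\alpha}$; and the Skorokhod alternative you sketch, while sound in outline for $\beta>1/4$, supplies an almost-sure rate for the approximation error that must still be upgraded to $L^{q}$ bounds on the blocked maxima (via Rosenthal for $\tau_j-B_j$ and a quantified modulus of continuity of $W$) before the blocking series can be summed, so it is a bit less ``routine'' than suggested.
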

 Thanks to the IP above, the question studied in \eqref{cramer.small.ampl.} can be transferred into  a similar one concerning Brownian motion. This is done in section \ref{s3}, where    Theorem \ref{cramer.small.ampl.t1} is   proved.

 \vskip 3 pt
  We also obtain a sharp estimate on   the number of occurences of the sets
\begin{equation} \label{bkw} B_{ k}(f,z)=\Big\{\sup_{j\in J_{e^k}}
\frac{|S_j-m_j|}{\sqrt {B_j}}\le z\Big\} , \qq k=1,2,\ldots
\end{equation}
where $J_N= \{j:N\le B_j<Nf(N)\}$.
Let  $$A_k(f,z)=\Big\{\sup_{e^k\le t\le e^kf(e^k) } \frac{|W(t)|}{ \sqrt t}\le z\Big\},$$
  and let also $\nu _n(f,z)= \sum_{k=1}^n\P\{{A_k(f,z)}\}$.
   \begin{theorem}\label{cramer.frequencies.t1}  Let   $0<z'<z<z''$. Let also  $0<c \le 1/\l(z') $. Then for $a>3/2$,
 \begin{equation*}  \P\Big\{ \sum_{k=1}^n \chi_{B_k(f_c,z)}\le    \nu _n(f_c,z'')+ \mathcal O_a \Big(  \nu^{1/2} _n(f_c,z'')  
\log^a \nu _n(f_c,z'') \Big),   \quad \hbox{$n$ ultimately}\Big\}=1,
 \end{equation*}
  \begin{equation*}  \P\Big\{    \nu _n(f_c,z') \le   \sum_{k=1}^n \chi_{B_k(f_c,z)}+\mathcal O_a \Big(\, \nu^{1/2} _n(f_c,z')  
\log^a \nu _n(f_c,z')\Big),   \quad \hbox{$n$ ultimately}\Big\}=1.
 \end{equation*}
  Further  for all $n$,  
  $$K_1(z)\,\sum_{k=1}^n k^{-c\l(z)}  \le   \nu_n(f_c,z)  \le  K_2(z)\,\sum_{k=1}^n 
  k^{-c\l(z)}.$$
  \end{theorem}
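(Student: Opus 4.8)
The plan is to transport the whole question to the Brownian side through the invariance principle of Proposition~\ref{cramer.IP}, then to a stationary Ornstein--Uhlenbeck process via the classical logarithmic time change, so that $\sum_{k\le n}\chi_{B_k(f_c,z)}$ is squeezed between two sums $\sum_{k\le n}\chi_{A_k(f_c,z^{\pm})}$; the expectation of each such sum is then governed by the principal Dirichlet eigenvalue $\lambda(\cdot)$ of \eqref{sl/e}, and its fluctuation by a maximal inequality for weakly dependent indicators. Throughout, $W$ denotes the Brownian motion of Proposition~\ref{cramer.IP}, so that $\Upsilon:=\sup_n B_n^{-\beta}\sup_{j\le n}|S_j-m_j-W(B_j)|<\infty$ almost surely for some $\beta<1/2$, and I write $A_k(f_c,z)=\{\sup_{e^k\le t\le e^kk^{c}}|W(t)|/\sqrt t\le z\}$ and $J_{e^k}=\{j:\,e^k\le B_j<e^kk^{c}\}$.

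First I would establish the sandwich: almost surely, $A_k(f_c,z')\subseteq B_k(f_c,z)\subseteq A_k(f_c,z'')$ for all large $k$. For $j\in J_{e^k}$ one has $B_j<e^kk^{c}$, hence $|S_j-m_j-W(B_j)|\le\Upsilon(e^kk^{c})^{\beta}$, so $|S_j-m_j|/\sqrt{B_j}$ differs from $|W(B_j)|/\sqrt{B_j}$ by at most $\Upsilon\,e^{k(\beta-1/2)}k^{c\beta}\to 0$. Then one must pass from the discrete supremum over $\{B_j:j\in J_{e^k}\}$ to the continuous supremum over $[e^k,e^kk^{c}]$: since $B_{j+1}-B_j=\tfrac1{\log(j+1)}\bigl(1-\tfrac1{\log(j+1)}\bigr)$ and $B_j\sim j/\log j$, that set is a net of mesh $\ll1/k$ of the interval, and by L\'evy's uniform modulus of continuity of $W$, the crude bound $\sup_{t\le e^kk^{c}}|W(t)|\ll(e^kk^{c})^{1/2+o(1)}$, and a Borel--Cantelli argument, the oscillation of $t\mapsto|W(t)|/\sqrt t$ over one mesh, for $t\ge e^k$, is $o(1)$ uniformly, almost surely. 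Since $z'<z<z''$, the two estimates give $\chi_{A_k(f_c,z')}\le\chi_{B_k(f_c,z)}\le\chi_{A_k(f_c,z'')}$ for $k\ge k_0(\omega)$; summing over $k\le n$ and absorbing the $\mathcal O(1)$ coming from $k<k_0$ reduces the two displayed inequalities of the theorem to the concentration of $\sum_{k\le n}\chi_{A_k(f_c,z'')}$ (for the upper one) and of $\sum_{k\le n}\chi_{A_k(f_c,z')}$ (for the lower one) around their means, carried out below. The absorption is legitimate because $c\lambda(z'')<c\lambda(z)<c\lambda(z')\le1$ (as $z''>z>z'$ and $c\le1/\lambda(z')$), so each of $\nu_n(f_c,z'),\nu_n(f_c,z),\nu_n(f_c,z'')$ tends to $\infty$.

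Next I would compute $\nu_n(f_c,z)$, which is the last assertion of the theorem and also names the normalising quantities. Under the time change $U(u)=e^{-u/2}W(e^{u})$ --- a stationary Ornstein--Uhlenbeck process --- the event $A_k(f_c,z)$ asserts that $U$ stays inside $(-z,z)$ throughout a time interval of length $c\log k$, up to the time normalisation fixed in the proof of Theorem~\ref{cramer.small.ampl.t1}. By the spectral representation of the semigroup of $U$ killed at the exit of $(-z,z)$, whose exponential rate is the smallest Dirichlet eigenvalue $\lambda(z)$ of \eqref{sl/e}, this probability is $\asymp_z k^{-c\lambda(z)}$ for $k$ large, the finitely many smaller $k$ affecting only the constants; summing over $k\le n$ gives $K_1(z)\sum_{k\le n}k^{-c\lambda(z)}\le\nu_n(f_c,z)\le K_2(z)\sum_{k\le n}k^{-c\lambda(z)}$.

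The crux is the concentration of $\sum_{k\le n}\chi_{A_k(f_c,\zeta)}$ around $N_n:=\nu_n(f_c,\zeta)$ for $\zeta\in\{z',z''\}$. These events are not independent --- $A_k$ and $A_l$ interact precisely when the corresponding $U$-intervals overlap, i.e.\ $|k-l|\ll\log(k\wedge l)$, and decouple exponentially otherwise --- so I would estimate the covariances through the Markov property of $U$. For $k<l$ with overlapping intervals, $A_k\cap A_l$ forces $U\in(-\zeta,\zeta)$ on one interval of length $\asymp(l-k)+\log l$, giving $\P(A_k\cap A_l)\ll_\zeta e^{-c_1\lambda(\zeta)(l-k)}\P(A_k)$ for some $c_1>0$; when the intervals are separated by a gap $g\asymp(l-k)-\log k$, conditioning on $U$ at the right end of the $k$-interval and combining the transition-kernel contraction $\|p_g(x,\cdot)-\mu\|_{{\rm TV}}\ll(1+|x|)e^{-g/2}$ with $\sup_y\P_y\bigl(U\text{ stays in }(-\zeta,\zeta)\text{ for time }\asymp\log l\bigr)\asymp_\zeta\P(A_l)$ gives $|{\rm Cov}(\chi_{A_k},\chi_{A_l})|\ll_\zeta e^{-g/2}\P(A_k)$. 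In either regime $\sum_{l>k}|{\rm Cov}(\chi_{A_k},\chi_{A_l})|\ll_\zeta\P(A_k)$, hence ${\rm Var}\bigl(\sum_{m<k\le n}\chi_{A_k(f_c,\zeta)}\bigr)\le C_\zeta(N_n-N_m)$. With $\Phi_n:=C_\zeta N_n$ nondecreasing, the G\'al--Koksma strong law (equivalently, a Menshov--Rademacher maximal inequality together with a dyadic Borel--Cantelli argument) gives
\[ \sum_{k\le n}\bigl(\chi_{A_k(f_c,\zeta)}-\P(A_k(f_c,\zeta))\bigr)=o\bigl(N_n^{1/2}(\log N_n)^{3/2+\varepsilon}\bigr)\quad\text{a.s., for every }\varepsilon>0, \]
and a fortiori $\mathcal O_a\bigl(N_n^{1/2}\log^a N_n\bigr)$ a.s.\ for every $a>3/2$; taking $\zeta=z''$ and $\zeta=z'$ and feeding this into the reductions above yields the two asserted inequalities. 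The hardest part is making the covariance bound uniform in $k$ --- the delicate point being the boundary behaviour near $\pm\zeta$, where the principal eigenfunction degenerates --- and selecting a maximal inequality whose logarithmic loss is no worse than $\log^{3/2+\varepsilon}$, which is precisely what produces the exponent $a>3/2$; the discretisation above is routine but not quite free, since the mesh of $\{B_j:j\in J_{e^k}\}$ is only of order $1/\log j$.
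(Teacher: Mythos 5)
Your proposal takes the same sandwich reduction as the paper --- use Proposition~\ref{cramer.IP} to control $\sup_{j\in J_{e^k}}|S_j-m_j-W(B_j)|/\sqrt{B_j}$, infer $A_k(f_c,z')\subset B_k(f_c,z)\subset A_k(f_c,z'')$ almost surely for $k$ large, and then sum --- but then diverges: where the paper simply invokes Theorem~\ref{fr1} and Corollary~\ref{J} (both imported from reference~\cite{W}) to dispose of the events $A_k$, you reconstruct that machinery from scratch. Concretely, you re-derive (a) the eigenvalue estimate $\P(A_k(f_c,\zeta))\asymp_\zeta k^{-c\lambda(\zeta)}$ via the spectral representation of the killed Ornstein--Uhlenbeck semigroup, and (b) the almost-sure fluctuation bound $\sum_{k\le n}(\chi_{A_k}-\P(A_k))=\mathcal O_a(\nu_n^{1/2}\log^a\nu_n)$, $a>3/2$, by establishing a quasi-orthogonality estimate $\sum_{l>k}|\mathrm{Cov}(\chi_{A_k},\chi_{A_l})|\ll_\zeta\P(A_k)$ from the exponential TV-mixing of the OU transition kernel and then applying G\'al--Koksma. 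This is substantially more work than the paper does, but it is internally consistent and the covariance argument is essentially the one underlying \cite[Th.\,1.1]{W}, so your logarithmic exponent $3/2+\varepsilon$ matches the paper's $a>3/2$. You also gain something the paper glosses over: the paper's \eqref{control.approx} only compares the discrete suprema $\sup_{j\in J_{e^k}}|S_j-m_j|/\sqrt{B_j}$ and $\sup_{j\in J_{e^k}}|W(B_j)|/\sqrt{B_j}$, whereas the events $A_k$ involve the continuous supremum over $t\in[e^k,e^kf(e^k)]$; your mesh/modulus-of-continuity step (mesh $\asymp 1/\log j\asymp 1/k$, oscillation of $|W(t)|/\sqrt t$ per mesh step $=o(1)$) is exactly the bridge the paper leaves implicit, and is needed for the inclusion $B_k(f_c,z)\subset A_k(f_c,z'')$. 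Two small cautions: your exponential-contraction bound $\|p_g(x,\cdot)-\mu\|_{TV}\ll(1+|x|)e^{-g/2}$ is stated for the specific OU normalisation $U(t)=e^{-t/2}W(e^t)$ and must be kept consistent with the normalisation in \eqref{sl/e} (there is a factor-of-two ambiguity between $\psi''-x\psi'=-\lambda\psi$ and the generator of the stationary OU process); and the constant $C_\zeta$ in the covariance bound must indeed be uniform in $k$, which requires $\sup_y q_l(y)\asymp_\zeta\P(A_l)$ --- this holds by the principal-eigenfunction expansion but should be stated, since it is where the dependence on the boundary behaviour near $\pm\zeta$ enters.
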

Estimates of the sums $\sum_{k=1}^n k^{-c\l(z)}$ are given  in \eqref{re.zetasum}.
The question  
 arises whether   the refinements obtained (Theorems \ref{cramer.LIL.sub}, \ref{cramer.small.ampl.t1},  \ref{cramer.frequencies.t1})   may also have an interpretation  on the function $\pi(x)$.  
   \vskip 5  pt The Cram\'er  model is used to \lq predict\rq\, several, sometimes  quite elaborated results on the distribution of primes.
  The example  given in  \eqref{cramer.pnt1a} is 
  %particularly
very    striking, as the subsequence-LIL   is a well-known 
   companion result of the standard LIL, and cannot be dissociated from it. Thus if one uses the Cram\'er  model to make  such a prediction concerning the PNT (see after \eqref{cramer.pnt} and \eqref{cramer.pnt.LIL}) from the  standard LIL, probably its most simple prediction, one should also consider the prediction which arises with \eqref{cramer.pnt1a}, and argue whether this is another deficiency of the model or not.
    \vskip 2 pt 
%\noi {\it  Frequency of  large gaps between \lq primes\rq \,.}   
The same sort of considerations is in order
   concerning the 
   frequency of  large gaps between \lq primes\rq .
%   , although we couldn't refer 
 %  this does not appear
  %  in the related literature.
 %  The Cram\'er model also   predicts (\eqref{cramer.Counting.Em},  Appendix \ref{s2.3}) that    with probability one, for any $J$ large enough, the number of  large gaps between \lq consecutive primes\rq \, in the interval $[1, cJ  (\log  J )^2]$, namely such that $ P_{\nu +1}-P_{\nu}\ge c\log^2 P_{\nu }$, is $\O_c(J^{1-c}(\log  J)^{-2c})$, for any $0<c<1$. 
   See   \eqref{cramer.Counting.Em} and after in Appendix \ref{s2.3}.

 \vskip 17 pt 
Concerning the probability that $S_n$  be prime or quasi-prime, and the primality of ${P_n}$, we prove the following results.  
\subsection{Primality of ${S_n}$.}
 
\begin{theorem}\label{Snprime.M} 
{\rm (i)} For any constant $b>1/2$, 
\begin{equation}\label{Snprime.i}   \P \{S_n\ \text{prime}\, \}
\, =\, \frac{ 1 }{ \sqrt{2\pi  B_n  } }\,\int_{m_n-\sqrt{ 2bB_n\log n}}^{m_n+\sqrt{ 2bB_n\log n}}
 e^{- \frac{(t- m_n)^2}{    2  B_n   } } \, \dd \pi(t)  
    + \mathcal O\Big( \frac{(\log n)^{3/2 }}{\sqrt n} \Big),
  \end{equation}
as $n\to\infty$.
  \vskip 3 pt {\rm (ii)} There exists a set of integers $\mathcal S$ of density 1, such that  \begin{equation}\label{abel.base,}  
 \P \{S_n\ \text{prime}\, \}
\, =\, \frac{ (1+ o( 1) )}{ \sqrt{2\pi  B_n  } } 
\int_{m_n-\sqrt{ 2bB_n\log n}}^{m_n+\sqrt{ 2bB_n\log n}}
  \,  e^{- \frac{(t- m_n)^2}{    2  B_n   } }\, \dd \pi(t),  \end{equation} 
as $n\to \infty$,  $n\in \mathcal S$. Further,
   \begin{equation}\label{abel.base,,}  
\liminf_{  \mathcal S\ni n\to\infty}\ (\log n)\P \{S_n\ \hbox{prime}  \}
\ge  \frac{1  }{  \sqrt{2\pi e}\, } .
   \end{equation}
 \end{theorem}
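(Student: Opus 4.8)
I would start from the exact identity $\P\{S_n\ \text{prime}\}=\sum_{p\le n}\P\{S_n=p\}$, the sum over primes, and insert the sharpened local limit theorem (Proposition~\ref{lltsharp[cramer]}), $\P\{S_n=k\}=\frac{1}{\sqrt{2\pi B_n}}e^{-(k-m_n)^2/(2B_n)}+\varepsilon_n(k)$ with $\varepsilon_n(k)$ uniformly small, then split the prime sum at $y:=\sqrt{2bB_n\log n}$. For $|p-m_n|\le y$ the Gaussian main terms reassemble by Stieltjes summation into $\frac{1}{\sqrt{2\pi B_n}}\int_{m_n-y}^{m_n+y}e^{-(t-m_n)^2/(2B_n)}\,\dd\pi(t)$ --- here $y\sim\sqrt{2bn}=o(m_n)$, so both endpoints lie in the range $[2,n]$ of $S_n$ --- while the accumulated errors are at most $\big(\#\{p\in[m_n-y,m_n+y]\}\big)\sup_k|\varepsilon_n(k)|$, the prime count being bounded via Brun--Titchmarsh; recalling $B_n\sim m_n\sim n/\log n$ this contributes $\mathcal O\big((\log n)^{3/2}/\sqrt n\big)$. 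For $|p-m_n|>y$ the total is at most $\P\{|S_n-m_n|>y\}$, which a Chernoff bound (optimizing $e^{-\lambda(m_n+y)}\exp\{(e^\lambda-1)m_n\}$ at $\lambda=\log(1+y/m_n)$) estimates by $n^{-b+o(1)}$, hence $o\big((\log n)^{3/2}/\sqrt n\big)$ since $b>\tfrac12$. This gives \eqref{Snprime.i}.

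Part (ii) then amounts to showing that on a density-$1$ set the integral term dominates the error in \eqref{Snprime.i}, i.e. that $\frac{1}{\sqrt{2\pi B_n}}\int_{m_n-y}^{m_n+y}e^{-(t-m_n)^2/(2B_n)}\dd\pi(t)\gg(\log n)^{3/2}/\sqrt n$; since $\sqrt{B_n}\sim\sqrt n/\sqrt{\log n}$ this is equivalent to $\sum_{|p-m_n|\le y}e^{-(p-m_n)^2/(2B_n)}\gg\log n$, and is implied by $\pi(m_n+\sqrt{B_n})-\pi(m_n)\gg\sqrt{B_n}/\log m_n$. Because $\sqrt{B_n}\asymp\sqrt{m_n}$, the interval $[m_n,m_n+\sqrt{B_n}]$ has length of order $\sqrt{x}$ about $x=m_n$, which is comfortably inside the range where it is known \emph{unconditionally} that $\pi(x+h)-\pi(x)\sim h/\log x$ for all $x\le X$ outside an exceptional set of measure $o(X)$ (a standard consequence of zero-density estimates). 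It remains to pass from ``almost all $x$'' to ``almost all $n$'': grouping $n$ into short blocks on which $m_n$ and $B_n$ vary by a factor $1+o(1)$, and using that consecutive $m_n$ differ by $1/\log n$, an exceptional $x$-set of measure $o(X)$ pulls back to $o(\#\{n\})$ integers; summing over blocks produces a density-$0$ exceptional set. Letting $\mathcal S$ be its complement, \eqref{abel.base,} holds on $\mathcal S$.

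For the lower bound \eqref{abel.base,,} I would restrict the integral to the one-sided window $[m_n,m_n+\sqrt{B_n}]\subseteq[m_n-y,m_n+y]$, on which $(t-m_n)^2/(2B_n)\le\tfrac12$:
\[\int_{m_n-y}^{m_n+y}e^{-(t-m_n)^2/(2B_n)}\,\dd\pi(t)\ \ge\ e^{-1/2}\big(\pi(m_n+\sqrt{B_n})-\pi(m_n)\big)\ \ge\ (1-o(1))\,e^{-1/2}\,\frac{\sqrt{B_n}}{\log m_n}\]
for $n\in\mathcal S$, by the defining property of $\mathcal S$. Substituting into \eqref{abel.base,} and using $\log m_n\sim\log n$ (from $m_n=(1+o(1))n/\log n$) gives
\[(\log n)\,\P\{S_n\ \text{prime}\}\ \ge\ (1-o(1))\,\frac{e^{-1/2}}{\sqrt{2\pi}}\cdot\frac{\log n}{\log m_n}\ \longrightarrow\ \frac{e^{-1/2}}{\sqrt{2\pi}}=\frac{1}{\sqrt{2\pi e}}\]
as $\mathcal S\ni n\to\infty$; the length $\sqrt{B_n}$ of the window is chosen precisely because $c=1/2$ maximizes $\sqrt{2c}\,e^{-c}$.

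The main obstacle is the arithmetic input: one needs a genuinely \emph{unconditional} equidistribution statement for primes in intervals of length $\asymp\sqrt x$ valid for almost all $x$, together with a careful --- though routine --- blocking argument transferring it to almost all integers $n$ (the centre $m_n$, the variance $B_n$ and the radius $y$ all vary with $n$). The other delicate point is the bookkeeping in (i): summing the uniform LLT error over \emph{all} $\pi(n)$ primes would be far too large, so one must play the Gaussian decay outside the bulk against the Brun--Titchmarsh bound for the number of primes inside it to keep the error at the stated level $(\log n)^{3/2}/\sqrt n$.
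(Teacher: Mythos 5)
Your proposal follows essentially the same route as the paper: part~(i) is the sharpened local limit theorem plus a concentration bound for the tail (the paper quotes Petrov's exponential inequality; your Chernoff computation gives the same $n^{-b+o(1)}$), and part~(ii) rests on the unconditional almost-all-$x$ version of the prime number theorem in short intervals of length $\asymp\sqrt x$ --- the paper cites Selberg's Theorem~1 explicitly, you invoke the equivalent zero-density consequence --- followed by the same restriction to a window of half-width $\sqrt{B_n}$ to extract the constant $1/\sqrt{2\pi e}$. Your optimization in $c$ is in fact stated more cleanly than the paper's, which fixes one $b>1/2$ throughout and lets the constant drop out implicitly.

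The one genuine gap is the transfer from ``almost all reals $x$'' to ``almost all integers $n$''. Your key claim --- ``an exceptional $x$-set of measure $o(X)$ pulls back to $o(\#\{n\})$ integers'' --- is false for an arbitrary measurable set: a null set can contain every one of the discrete points $m_n$ (e.g.\ the set $\{m_n:n\ge 3\}$ itself). You cannot go from a density statement about $x$ to a density statement about $n$ by measure-counting alone, because the $m_n$ are sparse points, not a positive-measure set. What saves the argument, and what the paper actually uses, is a stability observation: $\Delta(x)=\pi(x+\Phi(x))-\pi(x)$ changes by only $O(1)$ when $x$ is perturbed by $O(1)$, while the main term $\Phi(x)/\log x\asymp\sqrt{x}/\log x\to\infty$; hence if one point $x\in\mathcal S_\delta$ lies in a unit interval $]\mu-1,\mu]$, then \emph{every} $y$ within bounded distance of $x$ --- in particular all the $\sim\log\nu$ values $m_\nu$ in that unit interval --- satisfies the estimate with $\delta$ replaced by $2\delta$. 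Since the number of unit intervals $]\mu-1,\mu]$, $\mu\le X$, disjoint from $\mathcal S_\delta$ is at most $|\mathcal S_\delta^c\cap[0,X]|=o(X)$, this does give a density-$1$ set of $\nu$. That stability step is the missing idea; without it the blocking argument you describe as ``routine'' does not close. (The paper also inserts a small diagonalization across $\delta=1/n$ to turn the fixed $\delta$ into an $\varepsilon(\nu)\downarrow 0$, which your ``$1-o(1)$'' tacitly presupposes, but that part is indeed routine.)
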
 
The proof uses a result of  Selberg  \cite{Se}.
       \vskip 8 pt
 \subsection{Quasi-primality of ${S_n}$.}    \noi     Let $\Pi_z=\prod_{p\le z}p$. According to Pintz \cite{P}, an integer $m$ is   $z$-quasiprime, if $(m, \Pi_z)=1$. Let $S'_n= \sum_{j= 8}^n \xi_j$, $n\ge 8$. Note that the introduction of $S'_n$ in place of $S_n$ is not affecting Cram\'er's conjecture, see Remark \eqref{sna}. In the next Theorem we study for all $n$ large enough, the probability that $S'_n$   be $z$-quasiprime.

 \begin{theorem}\label{cramer.quasi.prime.P}
 We have for any $0<\eta<1$, and all $n$ large enough  and      $ \zeta_0\le  \zeta\le \exp\big\{ \frac{c\log n}{\log\log n}\big\}$, 
\begin{eqnarray*}  \P\big\{  S'_n\hbox{\ $\zeta$-quasiprime}\big\}   \,\ge  \, (1-\eta)   \frac{ e^{-\gamma} }{  \log
\zeta }. \end{eqnarray*}
 where $\gamma$ is  Euler's constant and $c$ is a positive constant.\end{theorem}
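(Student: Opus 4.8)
## Proof proposal for Theorem \ref{cramer.quasi.prime.P}

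The plan is to estimate the probability that $S'_n$ is coprime to $\Pi_\zeta=\prod_{p\le\zeta}p$ by comparing $S'_n$ to the "frozen" random variable obtained by replacing the success probabilities $1/\log j$ on a long tail of the sum by a constant close to $1$, so that $S'_n$ becomes, modulo each prime $p\le\zeta$, close to uniformly distributed. First I would fix $\zeta$ in the stated range and, for each prime $p\le\zeta$, analyze the law of $S'_n\bmod p$. Writing $S'_n=\sum_{j=8}^n\xi_j$, the characteristic function on $\ZZ/p\ZZ$ is $\EE e^{2\pi i k S'_n/p}=\prod_{j=8}^n\big(1-\tfrac1{\log j}+\tfrac1{\log j}e^{2\pi i k/p}\big)$ for $k=1,\dots,p-1$; since $|1-\tfrac1{\log j}+\tfrac1{\log j}e^{2\pi i k/p}|\le 1-\tfrac{c_p}{\log j}$ with $c_p=c_p(k)>0$ bounded below in terms of $\sin^2(\pi/p)\gtrsim p^{-2}$, and since $\sum_{j=8}^n\tfrac1{\log j}\sim n/\log n$, the nonzero Fourier coefficients decay like $\exp(-c\,n/(p^2\log n))$. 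Hence $\P\{S'_n\equiv a\ (p)\}=\tfrac1p+O(\exp(-cn/(p^2\log n)))$ uniformly in $a$, and this is genuinely small provided $p^2=o(n/(\log n\cdot\log\log n))$, which is implied by $\zeta\le\exp\{c\log n/\log\log n\}$ only after being careful — actually $\zeta$ itself is subexponential, so $p\le\zeta$ can be as large as $\exp\{c\log n/\log\log n\}$ and $p^2$ can dwarf $n$. So the naive single-prime bound is not enough for the largest primes, and this is the main obstacle (see below).

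The second step is to pass from individual congruences to coprimality. By inclusion–exclusion over squarefree $d\mid\Pi_\zeta$,
\begin{equation*}
\P\{(S'_n,\Pi_\zeta)=1\}=\sum_{d\mid\Pi_\zeta}\mu(d)\,\P\{d\mid S'_n\}
=\sum_{d\mid\Pi_\zeta}\mu(d)\Big(\tfrac1d+\varepsilon_d\Big),
\end{equation*}
where $\varepsilon_d$ is controlled by the Fourier analysis modulo $d$ (CRT reduces it to the prime factors). The main term is $\sum_{d\mid\Pi_\zeta}\mu(d)/d=\prod_{p\le\zeta}(1-\tfrac1p)$, which by Mertens' theorem equals $(1+o(1))\,e^{-\gamma}/\log\zeta$ — exactly the target expression. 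So the theorem reduces to showing that the total error $\sum_{d\mid\Pi_\zeta}|\varepsilon_d|$ is $o(e^{-\gamma}/\log\zeta)$, or at least $\le\eta\cdot e^{-\gamma}/\log\zeta$ for $n$ large. For this one splits the primes $p\le\zeta$ into "small" primes, say $p\le y$ with $y$ a slowly growing threshold, and "large" primes $y<p\le\zeta$. For $d$ supported on small primes the Fourier bound gives $|\varepsilon_d|\le\exp(-cn/(y^2\log n))$ times the number of divisors, which is $\le 2^{\pi(y)}$ and is crushed by the exponential as long as $y^2\pi(y)\log 2=o(n/\log n)$; choosing e.g. $y=n^{1/3}$ (or any power less than $1/2$) kills all those terms with room to spare.

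The remaining, and genuinely delicate, contribution comes from divisors $d$ having at least one large prime factor $p>y$. Here I expect to follow Pintz \cite{P} and use a sieve argument rather than direct Fourier estimates: the quantity $\P\{S'_n\ \zeta\text{-quasiprime}\}$ is, for each realization, counting whether an integer of size $\asymp n/\log n$ avoids all primes up to $\zeta$, and one can run the fundamental lemma of sieve theory (Brun or Selberg) on the residue classes, using only the weak equidistribution modulo small primes that we have already established, to get the one-sided bound $\P\{(S'_n,\Pi_\zeta)=1\}\ge (1-\eta)\prod_{p\le\zeta}(1-\tfrac1p)$ directly, without needing control modulo the large primes individually. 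The key input is that $S'_n$ is supported on $[0,n]$ with a density $\ge c/\log n$ everywhere on its range (from the local limit theorem, Proposition \ref{lltsharp[cramer]}), so that the sieve dimension is $1$ and the fundamental lemma applies with level of distribution up to $n^{1/3}$; the condition $\zeta\le\exp\{c\log n/\log\log n\}$ is exactly what makes $\log\zeta$ small enough (of order $\log n/\log\log n$) that $\prod_{y<p\le\zeta}(1-1/p)$ — the "tail" Mertens factor — stays bounded away from $0$, so that losing a factor $1-\eta$ in the sieve is affordable. I would therefore organize the write-up as: (1) Fourier/equidistribution modulo small moduli; (2) reduce to a one-dimensional sieve problem via the local limit theorem; (3) invoke the fundamental lemma with level $n^{1/3}$ and sift by primes $\le\zeta$; (4) read off the constant $e^{-\gamma}/\log\zeta$ from Mertens, absorbing the error into $(1-\eta)$ for $n$ large and $\zeta$ in the stated range. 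The hardest part is step (3)–(4): making the sieve's error term genuinely smaller than $\eta\,e^{-\gamma}/\log\zeta$ uniformly over the whole range $\zeta_0\le\zeta\le\exp\{c\log n/\log\log n\}$, which forces the precise shape of the upper cutoff on $\zeta$ and is where Selberg's/Pintz's techniques enter.
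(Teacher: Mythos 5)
Your proposal — characteristic-function equidistribution of $S'_n$ modulo small moduli, inclusion--exclusion over $d\mid\Pi_\zeta$, and the fundamental lemma of sieve theory for the large primes — is a genuinely different route from what the paper does. The paper never touches a sieve. It exploits the multiplicative factorization $\xi_j\stackrel{\text{a.s.}}{=}\check\xi_j\,\e_j$, with $\e_j$ symmetric Bernoulli and $\check\xi_j$ an independent binomial of parameter $2/\log j$ (this is why the sum starts at $j=8$, so $2/\log j<1$), so that conditionally on the $\check\xi_j$ the Cram\'er sum $S'_n$ is literally a Bernoulli sum $B_K$ of random length $K=\sum_{j\le n}\check\xi_j$. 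It then plugs in the already-known sharp estimate for $\P\{P^-(B_k)>\zeta\}$ (Theorem \ref{Bernoulli.quasi.p}, from \cite{W1}) and uses a one-line McDiarmid concentration bound (Lemma \ref{md}) to guarantee $K$ is at least a constant multiple of its mean, which places $\zeta$ inside the admissible range $\zeta\le k^{c/\log\log k}$ of that theorem. This reduction costs a paragraph, handles the full stated range of $\zeta$ directly, and avoids both Fourier analysis and sieve machinery.

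Your plan is viable, but two points need tightening. First, your step (2), ``reduce to a one-dimensional sieve problem via the local limit theorem,'' is not the right input: what the fundamental lemma needs is the level-of-distribution estimate $\P\{d\mid S'_n\}=\tfrac1d+r_d$ with $\sum_{d\le D}|r_d|$ small, and that comes from your characteristic-function bound in step (1), not from Proposition \ref{lltsharp[cramer]}; in fact the paper's Theorem \ref{div.Sn1} supplies precisely such a uniform divisor estimate. Second, you cannot run full inclusion--exclusion over all $d\mid\Pi_\zeta$ as written — there are exponentially many terms and $\P\{d\mid S'_n\}$ has no useful bound once $d$ exceeds $\sqrt{B_n}$. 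The fundamental lemma is exactly the device that truncates this to sieve weights $\lambda_d^\pm$ supported on $d\le D$; with $|r_d|\ll B_n^{-1/2}$ one may take $D\asymp\sqrt{B_n}/\!\log n$, and then $s=\log D/\log\zeta\to\infty$ uniformly over $\zeta\le\exp\{c\log n/\log\log n\}$, which is what lets the lower-bound sieve deliver the factor $1-\eta$ against the Mertens main term $e^{-\gamma}/\log\zeta$. Written out carefully this gives the theorem, but it is considerably heavier than the paper's Bernoulli-reduction argument, and the attribution to Pintz is not quite right: the paper cites Pintz only for the definition of $\zeta$-quasiprime, while the mechanism it actually uses is \cite{W1}.
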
 

The   approaches used to prove the above Theorems   not  apply to the study  of    the primality of    $P_\nu$. 

\vskip 3 pt  
 \subsection{Primality of ${P_n}$.} 
 %In the   next Theorem, we test which infinite sequences of primes  are ultimately avoided  by   the \lq primes\rq~  $ P_\nu$, with probability 1. 
We show that when the \lq primes\rq~  $ P_\nu$   are observed   along    moderately growing subsequences,  then with probability 1, they  ultimately avoid       any given infinite set of primes satisfying a reasonable  tail's  condition.  We also test which infinite sequences of primes  are ultimately avoided  by   the \lq primes\rq~  $ P_\nu$, with probability 1. More precisely  
we     answer  the following   question: 
\begin{question}  {\it Given  an increasing sequence of naturals  $\mathcal K$ and increasing sequence of primes $\mathfrak P$,  under which conditions   is $\mathfrak P$ avoided  by all $P_\nu$,   $\nu$ large enough, $\nu\in\mathcal K$,  with probability 1}? 
\end{question}

\begin{theorem}\label{mathfrak}  Let $\mathcal K$ be an increasing sequence of naturals such that  
the series  $\sum_{k\in \mathcal K}  k^{-\b} $ converges for some   $\b\in ]0, \frac12[$.
Let $\mathfrak P$  be an increasing sequence of primes such that for some  $b>1$,
 $$\sup_{k\in\mathcal K} \frac{\#\{\mathfrak P\cap [k,  b k] \}}{  k^{\frac12-\b}}<\infty. $$
Then \begin{eqnarray*}
   \P\big\{   \D_k\notin\mathfrak P, \quad k\in \mathcal K\ \hbox{ultimately}\big\}=1.
    \end{eqnarray*}
Further, 
\begin{eqnarray*}
   \P\big\{   P_\nu\notin\mathfrak P, \quad \nu\in \mathcal K\ \hbox{ultimately}\big\}=1.
    \end{eqnarray*}
Moreover (case $\b=1/2$), let  $\mathfrak P$ be such that 
$\sum_{p\in \mathfrak P , \, p>y} p^{-1/2} = \mathcal O \big(y^{-1/2}\big)$, and $\mathcal K$ be   such that
  $\sum_{k\in \mathcal K}  k^{-1/2}<\infty$.
Then 
\begin{eqnarray*}
   \P\big\{   P_\nu\notin\mathfrak P, \quad \nu\in \mathcal K\ \hbox{ultimately}\big\}=1.
    \end{eqnarray*}
    \end{theorem}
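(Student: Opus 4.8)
The plan is to reduce everything to a Borel–Cantelli argument governing when a jump instant $P_\nu$, or equivalently a value $\Delta_k := S_k$ observed at the running index $k$, can land on an element of $\mathfrak P$. The key point is that by the LIL (or more precisely by the invariance principle, Proposition \ref{cramer.IP}, and Theorem \ref{cramer.LIL.sub}), the random walk $S_k$ stays within $\mathcal O(\sqrt{B_k \log\log k})$ of its mean $m_k = \sum_{j=3}^k 1/\log j \sim k/\log k$; in fact along a sequence $\mathcal K$ with $\sum_{k\in\mathcal K} k^{-\beta}<\infty$ the relevant deviations are $\mathcal O(\sqrt{B_k}\,\varphi_{\mathcal K}(k))$ with $\varphi_{\mathcal K}(k)$ much smaller. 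So the first step is to fix, for each $k\in\mathcal K$, a window $W_k$ of integers of length $\asymp \sqrt{k}\cdot\varphi_{\mathcal K}(k)$ centred at (an integer near) $m_k$, chosen so that $\P\{S_k\notin W_k\}$ is summable over $k\in\mathcal K$; Theorem \ref{cramer.LIL.sub} is exactly what licenses this. On the complementary (full-probability, up to a Borel–Cantelli tail) event, $S_k\in W_k$ for all large $k\in\mathcal K$, so $\Delta_k\in\mathfrak P$ forces $W_k\cap\mathfrak P\neq\emptyset$.

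The second step is to bound $\P\{S_k\in\mathfrak P\}$ by $\P\{S_k\in W_k\cap\mathfrak P\} + \P\{S_k\notin W_k\}$ and to control the first term by the local limit theorem (the sharpened LLT, Proposition \ref{lltsharp[cramer]}): for each integer $t$ in the bulk, $\P\{S_k = t\}\le C/\sqrt{B_k}\asymp C\sqrt{\log k}/\sqrt{k}$. Summing over $t\in W_k\cap\mathfrak P$ gives
\begin{equation*}
\P\{S_k\in\mathfrak P\} \ \le\ \frac{C\sqrt{\log k}}{\sqrt k}\,\#\{\mathfrak P\cap W_k\}\ +\ \P\{S_k\notin W_k\}.
\end{equation*}
Now $W_k\subseteq [k/\log k - c\sqrt k\,\varphi_{\mathcal K}(k),\ k/\log k + c\sqrt k\,\varphi_{\mathcal K}(k)]$, and the hypothesis $\sup_{k\in\mathcal K}\#\{\mathfrak P\cap[k,bk]\}/k^{1/2-\beta}<\infty$ (rescaled, since $W_k$ sits near $k/\log k$ and has length a small power of $k$, so it is covered by $\mathcal O(1)$ intervals of the form $[m,bm]$) gives $\#\{\mathfrak P\cap W_k\}\ll k^{1/2-\beta}$, perhaps up to a logarithmic factor absorbed into $\varphi_{\mathcal K}$. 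Hence $\P\{S_k\in\mathfrak P\}\ll k^{1/2-\beta}\sqrt{\log k}/\sqrt k\cdot(\text{log factor}) = \mathcal O(k^{-\beta}\log^{A}k)$, which, together with the summability of $\P\{S_k\notin W_k\}$, is summable over $k\in\mathcal K$ once we note $\sum_{k\in\mathcal K}k^{-\beta}<\infty$ with room to spare for the log (or one simply shrinks $\beta$ slightly). Borel–Cantelli then yields $\P\{\Delta_k\notin\mathfrak P,\ k\in\mathcal K\ \text{ultimately}\}=1$.

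The third step passes from $\Delta_k=S_k$ to the jump instants $P_\nu$. Here the observation is that $P_\nu\in\mathfrak P$ for some $\nu$ means: at the integer $n=P_\nu$ we have $\xi_n=1$, $n$ is prime, and $n = P_\nu$ where $\nu = S_n$. Indexing the jumps by $\nu\in\mathcal K$ is, via the near-linear growth $P_\nu\sim\nu\log\nu$ (the inverse relation of $m_k\sim k/\log k$), comparable to indexing the underlying integer $n$ by a sequence $\widetilde{\mathcal K}$ that is again sparse enough that $\sum_{n\in\widetilde{\mathcal K}}n^{-\beta'}<\infty$ for some $\beta'<1/2$; one then repeats the window/LLT/counting estimate with $\P\{P_\nu = n\}\le \P\{\xi_n=1\}\,\P\{S_{n-1}=\nu-1\}\le (1/\log n)\cdot C/\sqrt{B_n}$, the extra factor $1/\log n$ only helping, and the primality constraint $n\in\mathfrak P$ again contributing the factor $\#\{\mathfrak P\cap W\}\ll n^{1/2-\beta'}$ (after rescaling the hypothesis from the $k$-scale to the $n$-scale, which is legitimate because $n\asymp \nu\log\nu$). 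Summability over $\nu\in\mathcal K$ and Borel–Cantelli finish it. The borderline case $\beta=1/2$ is handled the same way: the hypothesis $\sum_{p\in\mathfrak P,\,p>y}p^{-1/2}=\mathcal O(y^{-1/2})$ replaces the crude counting bound and, combined with $\sum_{k\in\mathcal K}k^{-1/2}<\infty$, still gives a convergent series $\sum_{\nu\in\mathcal K}\P\{P_\nu\in\mathfrak P\}$, because $\sum_{p\in\mathfrak P\cap W_\nu}p^{-1/2}\asymp (\text{length of }W_\nu/\sqrt{\nu})\cdot(\text{tail sum at }\nu) + \ldots$ telescopes against the $k^{-1/2}$ weight.

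The main obstacle, and the place requiring care rather than routine computation, is the \emph{rescaling of the hypothesis on $\mathfrak P$ from the dyadic/geometric scale $[k,bk]$ to the scale of the window $W_k$}, which is centred at $m_k\sim k/\log k$ (not $k$) and has length a genuine power of $k$ (not a constant multiple of $k$). One must check that a window of length $\asymp k^{1/2+\epsilon}$ near $k/\log k$ is covered by a bounded number of intervals $[m,bm]$ — which is true since $b>1$ is fixed and $k^{1/2+\epsilon}/(k/\log k)\to 0$, so actually a single such interval (with $m\asymp k/\log k$) suffices — and then that $\#\{\mathfrak P\cap[m,bm]\}\ll m^{1/2-\beta}$ is the bound we need with $m\asymp k/\log k$, giving $\ll (k/\log k)^{1/2-\beta}$, which is even a little smaller than $k^{1/2-\beta}$. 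The only real subtlety is keeping the logarithmic factors from the LLT ($\sqrt{\log k}$) and from the window length ($\varphi_{\mathcal K}$, which can be taken $\asymp\sqrt{\log\log k}$ or slower) under control so that the final series is genuinely summable; this is harmless because $\sum_{k\in\mathcal K}k^{-\beta}<\infty$ leaves polynomial-in-$\log$ room, or one replaces $\beta$ by $\beta-\delta$ for small $\delta>0$ at the outset.
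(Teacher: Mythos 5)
Your proposal rests on a misidentification that undermines the whole first half of the argument: you set $\Delta_k := S_k$, but in the paper $\Delta_k=\delta_1+\dots+\delta_k$ is the $k$-th jump instant of the \emph{Bernoulli} sequence $\{\epsilon_j\}$ of Subsection 4.2, not the Cram\'er partial sum. The $\delta_\ell$ are i.i.d.\ geometric with mean $2$ (Theorem \ref{deltak.prop}), so $\Delta_k$ is concentrated around $2k$ with fluctuations $\mathcal O(\sqrt k)$ — not around $m_k\sim k/\log k$ with fluctuations $\mathcal O(\sqrt{B_k})$. Consequently, the window $W_k$ you build near $k/\log k$, the appeal to Proposition \ref{cramer.IP}/Theorem \ref{cramer.LIL.sub} and the Cram\'er LLT (Proposition \ref{lltsharp[cramer]}), and the rescaling of the hypothesis to $m\asymp k/\log k$ are all set up at the wrong scale. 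The last point is also self-defeating on its own terms: the hypothesis reads $\sup_{k\in\mathcal K}\#\{\mathfrak P\cap[k,bk]\}/k^{1/2-\beta}<\infty$, so it can only be invoked at points of $\mathcal K$, and nothing forces $k/\log k$ (or $\nu\log\nu$ in your third step) to lie in $\mathcal K$. The paper avoids all of this by applying the hypothesis directly at $k\in\mathcal K$, because the window around $\Delta_k\approx 2k$ does sit inside $[k,bk]$ up to a negligible tail.

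The paper's actual route is the exact renewal identity $\P\{\Delta_k=m\}=\tfrac12\,\P\{B_{m-1}=k-1\}$ from Theorem \ref{deltak.prop}(iii), combined with the local limit theorem \eqref{lltber} for the symmetric Bernoulli sum $B_n$. This gives
$\P\{\Delta_k\in\mathfrak P\}=\sum_{\nu\ge k,\,\nu\in\mathfrak P}\frac{1}{\sqrt{2\pi(\nu-1)}}e^{-(2k-\nu-1)^2/2(\nu-1)}+o(k^{-1/2})$,
the Gaussian factor cuts off the sum to $\nu\in[k,bk]$ up to an exponentially small tail, the counting hypothesis yields $\P\{\Delta_k\in\mathfrak P\}\le C_b k^{-\beta}$ for $k\in\mathcal K$, and Borel--Cantelli concludes. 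Your third paragraph does contain essentially this renewal--LLT idea — you write $\P\{P_\nu=n\}\le\P\{\xi_n=1\}\P\{S_{n-1}=\nu-1\}$, which is the correct analogue for the Cram\'er jumps — but you never apply the same idea to $\Delta_k$, which is where it is needed. For the passage to $P_\nu$, the paper instead invokes the coupling $\xi_j=\check\xi_j\epsilon_j$ and the resulting inclusion $\mathcal C\subset\mathcal B$ (Proposition \ref{C1B1}), piggybacking on the $\Delta_k$ result rather than redoing the window computation at the $\nu\log\nu$ scale, so here too your plan diverges from, and is more fragile than, the paper's. As written, the proposal would not produce a correct proof of the first two displayed conclusions.
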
 
%Theorem \ref{mathfrak} shows the unexpected fact that  the sequence $\D_k$  ultimately avoid with probability one, sets of primes of type $\mathfrak P$, as soon as $k\to \infty$ along very moderately growing sequences. The  proof is based on the jump properties of the Bernoulli random walk.

\vskip 3 pt

  \vskip 9 pt 
The paper is organized as follows.   The study of the   quasi-primality of $S_k$ is made in section \ref{7}, the one of the primality of $S_n$ occupies the whole section \ref{5}, and the one of the primality of   $P_n$ is made in section \ref{4}. These sections are a forming the main body of the  paper.   Theorem \ref{cramer.LIL.sub} is proved in section \ref{s4}.
In section \ref {s3},  we prove the IP,  as well as Theorem \ref{cramer.small.ampl.t1}, after some preliminary background, and Theorem 
\ref{cramer.frequencies.t1}.    The standard limit theorems for the Cram\'er model, statements and proofs,    and various results (sharp estimate of the characteristic function of $S_n$, divisors of Bernoulli sums) used in the course of the proofs  are moved  to the Appendix \ref{appendix-1}.  Some remarks concerning Cram\'er's proof are also included.

%%%%
%%%%

\section{Primality of $  {S_n}$: Proof of Theorem \ref{Snprime.M}.}\label{5} 
We need a sharper form of the local limit theorem for $S_n$ than the one given in     Lemma \ref{l1cramer}. 
\begin{proposition}\label{lltsharp[cramer]} We have the following estimate
\begin{eqnarray*}   \Big| \P \{S_n =\kappa \} -\frac{ e^{- \frac{(\k- m_n)^2}{    2B_n    } } }{ \sqrt{2\pi B_n    }}     \Big|         & \le &    C\,
 \frac{(\log n)^{3/2}}{n}  ,
   \end{eqnarray*}
for all $\k\in\Z$ such that
$$ |\k- m_n|  \le C \,\frac{n^{3/4}}{\log n} .$$ 
\end{proposition}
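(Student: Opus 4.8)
The plan is to prove this by Fourier inversion, comparing the characteristic function of $S_n$ with that of the approximating Gaussian. Put $\varphi_n(t)=\E e^{itS_n}=\prod_{j=3}^{n}\big(1+\tfrac{e^{it}-1}{\log j}\big)$. Since $S_n$ is $\Z$-valued,
$$\P\{S_n=\kappa\}=\frac1{2\pi}\int_{-\pi}^{\pi}e^{-i\kappa t}\varphi_n(t)\,\dd t,\qquad \frac{e^{-(\kappa-m_n)^2/(2B_n)}}{\sqrt{2\pi B_n}}=\frac1{2\pi}\int_{\R}e^{-i\kappa t}e^{i m_n t-\frac12 B_n t^2}\,\dd t,$$
so the difference to be estimated equals $\frac1{2\pi}\int_{-\pi}^{\pi}e^{-i\kappa t}\big(\varphi_n(t)-e^{i m_n t-\frac12 B_n t^{2}}\big)\dd t$ minus the Gaussian tail $\frac1{2\pi}\int_{|t|>\pi}e^{-i\kappa t}e^{i m_n t-\frac12 B_n t^{2}}\dd t$. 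As $B_n=\sum_{j=3}^{n}\tfrac1{\log j}(1-\tfrac1{\log j})\asymp n/\log n$, the latter is $\mathcal{O}(e^{-cB_n})$, negligible against $(\log n)^{3/2}/n$, so only the first integral matters, and I would cut it at a threshold $T_n\downarrow 0$.

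For the peripheral range $T_n\le|t|\le\pi$ I would use brute size. A direct computation gives $\big|1+\tfrac{e^{it}-1}{\log j}\big|^{2}=1-\tfrac{2(1-\cos t)}{\log j}\big(1-\tfrac1{\log j}\big)$; taking the product over $3\le j\le n$ and using $1-u\le e^{-u}$ yields $|\varphi_n(t)|\le e^{-(1-\cos t)B_n}$, and $1-\cos t\ge 2t^{2}/\pi^{2}$ on $[-\pi,\pi]$ gives $|\varphi_n(t)|\le e^{-\frac{2}{\pi^{2}}B_n t^{2}}$. Choosing $T_n$ with $B_nT_n^{2}\to\infty$ faster than every power of $\log n$ and $m_nT_n^{3}\to0$ — for instance $T_n=n^{-1/3}$, for which $B_nT_n^{2}\asymp n^{1/3}/\log n$ and $m_nT_n^{3}\asymp 1/\log n$ — the peripheral contribution is at most $\int_{T_n\le|t|\le\pi}|\varphi_n(t)|\,\dd t\le C e^{-cB_nT_n^{2}}$, again negligible.

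The central range $|t|\le T_n$ carries the main term, and there I would invoke the sharp estimate of $\varphi_n$ proved in Appendix \ref{appendix-1}: on $|t|\le T_n$ one may write $\varphi_n(t)=\exp\big(i m_n t-\tfrac12 B_n t^{2}+\rho_n(t)\big)$ with $|\rho_n(t)|\le C\,m_n|t|^{3}$, the bound coming from the third-order Taylor remainder of $\log\big(1+\tfrac1{\log j}(e^{it}-1)\big)$ summed against $\sum_{j}\tfrac1{\log j}=m_n$, the smallness of $T_n$ keeping every factor uniformly off the branch cut. Since $m_nT_n^{3}\to0$, $|\rho_n(t)|\le\tfrac12$ on this range, so $|e^{\rho_n(t)}-1|\le 2|\rho_n(t)|\le 2C m_n|t|^{3}$, and
$$\Big|\frac1{2\pi}\int_{|t|\le T_n}e^{-i\kappa t}\big(\varphi_n(t)-e^{i m_n t-\frac12 B_n t^{2}}\big)\dd t\Big|\;\le\;\frac{C}{\pi}\,m_n\int_{\R}|t|^{3}e^{-\frac12 B_n t^{2}}\dd t\;=\;\frac{C'm_n}{B_n^{2}}.$$
Because $m_n\asymp B_n\asymp n/\log n$ one has $m_n/B_n^{2}\asymp(\log n)/n$, so the three pieces together give a bound $\mathcal{O}\big((\log n)/n\big)$, a fortiori $\le C(\log n)^{3/2}/n$, and in fact uniformly in $\kappa$; the hypothesis $|\kappa-m_n|\le C n^{3/4}/\log n$ lies safely inside this regime and is just the range that is used afterwards in Theorem \ref{Snprime.M}, where $\dd\pi$ is integrated over an interval of half-length $\sqrt{2bB_n\log n}=\mathcal{O}(\sqrt n)=o(n^{3/4}/\log n)$.

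The only real difficulty is the characteristic-function estimate itself: producing the clean cubic control $|\rho_n(t)|\le C m_n|t|^{3}$ over a band of $t$ wide enough that $B_nT_n^{2}$ outgrows all powers of $\log n$, while keeping track of the branch of the logarithm and of the slowly varying, non-i.i.d. weights $1/\log j$ (so that $m_n$, $B_n$ and $\sum_{j}(\log j)^{-2}$ must be compared precisely). With that estimate in hand, the sharpened local limit theorem is just the three-region Fourier splitting above together with routine bookkeeping.
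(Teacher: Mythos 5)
Your argument is correct but follows a genuinely different route. The paper treats the statement as an immediate specialisation of Corollary 1.11 of \cite{GW} (Lemma \ref{ger1}): the range $|\kappa-m_n|\le C\,n^{3/4}/\log n$ is inherited from that result's hypothesis $(\kappa-\E S_n)^2/\mathrm{Var}(S_n)\le\sqrt{\Theta_n/(14\log\Theta_n)}$, and the rate $(\log n)^{3/2}/n$ from the term $D\big(\log\Theta_n/(\mathrm{Var}(S_n)\,\Theta_n)\big)^{1/2}$ after plugging in $\Theta_n\sim B_n\sim n/\log n$. You instead carry out the Fourier inversion directly, using the brute bound $|\varphi_n(t)|\le e^{-B_n(1-\cos t)}$ on the periphery, the cubic remainder of Proposition \ref{Cramer.Phin} on the centre, and an exponentially small Gaussian tail. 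That route is self-contained within the paper's own Appendix, gives a bound uniform in $\kappa$ with no range restriction, and in fact yields the slightly sharper rate $m_n/B_n^2\asymp(\log n)/n$; the extra half power of $\log n$ in the paper's statement is an artefact of the generality of the cited \cite{GW} result. The one point to state carefully --- and you do flag it --- is that Proposition \ref{Cramer.Phin}'s cubic estimate rests on the hypothesis $q|\sin\pi t|\le 1/3$ of Lemma \ref{lfp1}, which confines $t$ to a fixed small neighbourhood of the origin; your choice $T_n=n^{-1/3}\to 0$ keeps the central range inside that neighbourhood for all large $n$, so the argument goes through.
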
  
The remainder term is of order $\mathcal O(\frac{(\log n)^{3/2}}{n}) $, which is much better than $o(   (\frac{\log n}{n})^{1/2})  $ in Lemma \ref{l1cramer}. This is a consequence of Corollary 1.11 in \cite{GW}. For the reader convenience we recall it. Introduce first the necessary notation. Let
$v_{0} $ and $D >0$  be   real numbers. We denote by   $\mathcal L(v_{ 0},D )$ the lattice  defined by the
sequence $v_{ k}=v_{ 0}+D k$, $k\in \Z$.  We associate to any    random variable $X$ taking values in  ${\mathcal L}(v_0,D)$ with probability one,   the following characteristic,
\begin{eqnarray}\label{vartheta}  \t_X =\sum_{k\in \Z}\P\{X=v_k\}\wedge\P\{X=v_{k+1}\} ,
\end{eqnarray}
 where $a\wedge b=\min(a,b)$. Note   that
     $
\t_{X }<1$. 

\begin{lemma}[\cite{GW}, Cor.\,1.11]
\label{ger1} Let  $ X_1,\ldots ,X_n$  be  independent   random variables taking almost surely values in a common lattice   $\mathcal L(v_{
0},D )=\{v_k,k\in \Z\}$, where $v_{ k}=v_{ 0}+D k$, $k\in \Z$,   $v_{0} $ and $D >0$ are   real numbers. We assume that 
\begin{equation}\label{basber}  \t_{X_j}>0, \qq \quad  j=1,\ldots, n.
\end{equation}
Let $S_n=X_1+\ldots +X_n$.
Let $\psi:\R\to \R^+$ be even, convex and such that   $\frac
{\psi(x)}{x^2}$  and $\frac{x^3}{\psi(x)}$  are non-decreasing on $\R^+$. We  assume that
  \begin{equation}\label{did}  \E \psi( X_j )<\infty  , \qq \quad  j=1,\ldots, n  .
 \end{equation} 
 
 Put $$L_n=\frac{  \sum_{j=1}^n\E \psi (X_j)  }
{   \psi (\sqrt
{ {\rm Var}(S_n )})}  .$$  \vskip 2 pt
  
   Let  $0<\t_j\le \t_{X_j}$ and denote $\Theta_n=\sum_{j=1}^n \t_j$. Further assume that $\frac{  \log \Theta_n }{\Theta_n}\le  {1}/{14} $. Then, for all $\k\in \mathcal L( v_{
0}n,D )$ such that
$$\frac{(\k- \E
S_n)^2}{    {\rm Var}(S_n)  } \le \sqrt{\frac{ \Theta_n}{14 \log \Theta_n} },$$  we have
\begin{eqnarray*}   \Big| \P \{S_n =\kappa \} -\frac{ D e^{- \frac{(\k- \E
S_n)^2}{    2 {\rm Var}(S_n)    } } }{ \sqrt{2\pi {\rm Var}(S_n)     }}     \Big|         & \le &    C_3\Big\{
D\Big(\frac{    {   \log \Theta_n }     }{
 {    {\rm Var}(S_n)   \Theta_n} } \Big)^{1/2}  +    \frac{    L_n
 +  \Theta_n^{-1}
}{ \sqrt{   \Theta_n} } \Big\} .
   \end{eqnarray*}
And $C$ is an absolute constant.
\end{lemma}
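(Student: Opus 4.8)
This is Corollary~1.11 of \cite{GW}; I sketch the proof, which is the classical Fourier--inversion (local limit) method for lattice distributions, carried out so as to produce an \emph{effective} remainder. Put $\mu=\E S_n$ and $\sigma^2=\mathrm{Var}(S_n)$, and let $g_n(t)=\E e^{itS_n}=\prod_{j=1}^n\E e^{itX_j}$. Since the $X_j$ live on the common lattice $\mathcal L(v_0,D)$, $S_n$ lives on $\mathcal L(v_0 n,D)$, and for $\k\in\mathcal L(v_0 n,D)$ the inversion formula gives
\begin{equation*}
\P\{S_n=\k\}=\frac{D}{2\pi}\int_{-\pi/D}^{\pi/D}g_n(t)\,e^{-it\k}\,\dd t .
\end{equation*}
Since moreover $\frac{D\,e^{-(\k-\mu)^2/(2\sigma^2)}}{\sqrt{2\pi\sigma^2}}=\frac{D}{2\pi}\int_{\R}e^{it\mu-\sigma^2t^2/2}e^{-it\k}\,\dd t$, and the frequencies $|t|>\pi/D$ in the last integral contribute only $\mathcal O(e^{-c\sigma^2/D^2})$ — negligible because $\sigma^2$ is large relative to $D^2$, a consequence of $\Theta_n$ being large — everything reduces to bounding $\frac{D}{2\pi}\int_{-\pi/D}^{\pi/D}\bigl(g_n(t)-e^{it\mu-\sigma^2t^2/2}\bigr)e^{-it\k}\,\dd t$.

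The plan is to split $[-\pi/D,\pi/D]$ at a scale $\delta\asymp\sqrt{(\log\Theta_n)/\Theta_n}$; it is here that the hypothesis $(\log\Theta_n)/\Theta_n\le 1/14$ makes the choice consistent. On the outer range $\delta\le|t|\le\pi/D$ the Gaussian piece is super-exponentially small, while for $g_n$ the decisive input is the quantitative non-degeneracy estimate, valid for every $j$ and every $|t|\le\pi/D$,
\begin{equation*}
|\E e^{itX_j}|\le e^{-c\,\t_{X_j}(1-\cos Dt)} ,
\end{equation*}
in which the characteristic $\t_{X_j}$ — the total mass of the overlapping minima of consecutive atom weights — is exactly what gets used. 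Multiplying over $j$ gives $|g_n(t)|\le e^{-c\,\Theta_n(1-\cos Dt)}$, hence $|g_n(t)|\le\Theta_n^{-c'}$ on $\delta\le|t|\le\pi/D$; after integration this contribution is absorbed into the term $C_3(L_n+\Theta_n^{-1})/\sqrt{\Theta_n}$, together with the leftover $C_3\,D\bigl(\log\Theta_n/(\sigma^2\Theta_n)\bigr)^{1/2}$ coming from matching the truncated Gaussian integral at scale $\delta$ with the full Gaussian density.

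On the central range $|t|\le\delta$ the plan is to Taylor--expand each $\E e^{itX_j}$ to second order and sum after taking logarithms, writing $\log g_n(t)=it\mu-\tfrac12\sigma^2t^2+R_n(t)$. The remainder is then controlled by a \emph{sub-cubic} moment: the two monotonicity hypotheses on $\psi$ (that $\psi(x)/x^2$ and $x^3/\psi(x)$ are non-decreasing) give, for each $j$, $\bigl|\E e^{itX_j}-1-it\E X_j+\tfrac12 t^2\E X_j^2\bigr|\ll \E\psi(|X_j|)/\psi(1/|t|)$, whence $|R_n(t)|\ll L_n\,\psi(\sigma)/\psi(1/|t|)$, which is $\le\tfrac12$ on a suitable central range, so that $g_n(t)-e^{it\mu-\sigma^2t^2/2}=e^{it\mu-\sigma^2t^2/2}\bigl(e^{R_n(t)}-1\bigr)$ with $|e^{R_n(t)}-1|\ll |R_n(t)|$. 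Taking absolute values, substituting $u=\sigma t$, and using $\int_{\R}u^2e^{-u^2/2}\,\dd u<\infty$ bounds the central contribution by $\ll L_n/\sqrt{\Theta_n}$ after the normalization, which accounts for the last term in the stated estimate. Finally, the restriction $(\k-\mu)^2/\sigma^2\le\sqrt{\Theta_n/(14\log\Theta_n)}$ is precisely the range of $\k$ in which this Gaussian-matching costs only a negligible amount and the Edgeworth-type corrections stay below the claimed error; for $\k$ outside a bounded multiple of $\sigma$ around $\mu$ the Gaussian main term is itself smaller than the error bound, so the statement degenerates to a correct but uninformative upper bound for $\P\{S_n=\k\}$.

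I expect the two genuinely delicate points to be: $(a)$ passing from the concentration characteristic $\t_{X_j}$ to an honest exponential decay of $|g_n|$ away from $t=0$ \emph{without} any smoothness or density minorization for the laws of the $X_j$ — this is the reason $\t_{X_j}$, rather than a local density bound, is the right quantity; and $(b)$ the sub-cubic refinement of the central expansion, which replaces the usual third absolute moment by the weaker $\sum_j\E\psi(X_j)$ through the interpolating role of $\psi$. Balancing these two contributions via the truncation scale $\delta$ is what fixes the absolute constants, in particular the threshold $1/14$ and $C_3$; for the application to Cram\'er's model in Proposition~\ref{lltsharp[cramer]} one then uses only the regime $D=1$, $\Theta_n\asymp\mathrm{Var}(S_n)\asymp n/\log n$, where the first error term $D(\log\Theta_n/(\sigma^2\Theta_n))^{1/2}$ already dominates and produces the $\mathcal O\bigl((\log n)^{3/2}/n\bigr)$ remainder, the range condition reducing to $|\k-m_n|\ll n^{3/4}/\log n$.
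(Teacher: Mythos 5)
The paper itself does not prove this lemma: it reproduces Corollary~1.11 of \cite{GW} verbatim ``for the reader's convenience'' and then only verifies its hypotheses in the proof of Proposition~\ref{lltsharp[cramer]}, so there is no in-paper proof to compare with. Your Fourier--inversion sketch is the standard effective local-limit argument and, so far as I can tell, it is the route taken in \cite{GW}. The structural claims check out. The Mineka-type mixture decomposition $p_k=q_{k-1}/2+q_k/2+r_k$ with $q_k=\P\{X_j=v_k\}\wedge\P\{X_j=v_{k+1}\}$ (note $r_k\ge 0$ since $p_k\ge\max(q_{k-1},q_k)$) gives both the non-degeneracy bound $|\E e^{itX_j}|\le 1-\tfrac14\t_{X_j}(1-\cos Dt)\le e^{-\t_{X_j}(1-\cos Dt)/4}$ on $|t|\le\pi/D$ \emph{and}, by the law of total variance, ${\rm Var}(X_j)\ge \t_{X_j}D^2/4$, hence $\sigma^2\ge \Theta_nD^2/4$ — which is the elementary fact your remark that ``$\sigma^2$ is large relative to $D^2$ because $\Theta_n$ is large'' implicitly relies on and should be stated. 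The sub-cubic remainder bound $|\E e^{itX_j}-1-it\E X_j+\tfrac12 t^2\E X_j^2|\le \E\psi(X_j)/\psi(1/|t|)$ follows exactly as you say from the two monotonicity hypotheses ($x^3/\psi(x)\nearrow$ when $|tx|\le 1$, $\psi(x)/x^2\nearrow$ when $|tx|>1$), and after substituting $u=\sigma t$ and integrating against $e^{-u^2/2}$ this yields the $L_n/\sqrt{\Theta_n}$ contribution. Your specialization to Cram\'er's model ($D=1$, $\Theta_n\asymp\sigma^2\asymp n/\log n$, dominant error $(\log n)^{3/2}/n$, admissible range $|\k-m_n|\lesssim n^{3/4}/\log n$) is also correct. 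One notational slip: the truncation scale must satisfy $D\delta\asymp\sqrt{(\log\Theta_n)/\Theta_n}$, not $\delta$ alone at that scale; otherwise the exponent $\Theta_n(1-\cos D\delta)$ on the outer range is off by a factor $D^2$ (harmless in the application since $D=1$, but not in the general statement).
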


   \begin{proof}[Proof  of Proposition 
  \ref{lltsharp[cramer]}] In  our case $D=1$. Further for $j=3,\ldots , n$, $\P\{\xi_j=k\}\wedge\P\{\xi_j=k+1 \}=\frac{1}{\log j+2}$, if $k=0$,  and equals $0$ for $k\in \Z_*$. Thus $  \t_{\xi_j}=\frac{1}{\log j}$.  We choose $ \t_j= \t_{X_j}$,  $\psi(x)=|x|^3$. Then  $\Theta_n=\E S_n\sim \frac{n}{\log n}$, ${\rm Var}(S_n)=B_n \sim 
  \frac{n}{\log n} $, $L_n\sim( \frac{\log n }{   n})^{1/2}$.

Thus ($m_n=\sum_{k=1}^n    \frac{1}{\log k}$, $B_n=\sum_{k=1}^n  (1-\frac{1}{\log k})(\frac{1}{\log k})$)
\begin{eqnarray*}   \Big| \P \{S_n =\kappa \} -\frac{ e^{- \frac{(\k- m_n)^2}{    2  B_n    } } }{ \sqrt{2\pi  B_n  }}     \Big|         & \le &    C\,
 \frac{(\log n)^{3/2}}{n}  ,
   \end{eqnarray*}
for all $\k\in\Z$ such that
$ |\k- m_n|  \le C \,\frac{n^{3/4}}{\log n}$. 
\end{proof}

\vskip 20 pt 
 
\begin{proof}[Proof of Theorem \ref{Snprime.M}.]
(i) By Lemma 7.1 p.\,240 in \cite{P}, for $0\le x\le B_n$
\begin{eqnarray*} \P \{|S_n-m_n|\ge x \}&=& \P \{S_n-m_n\ge x \}
+\P \{-(S_n-m_n)\ge x \}
\cr &\le &2\,\exp\Big\{-\frac{x^2}{2B_n}\Big(1-\frac{x}{2B_n}\Big)\Big\},
\end{eqnarray*}
noticing that   $\{-\xi_j\}_j$ also satisfies the conditions of Kolmogorov's Theorem. Let $b>b'>1/2$.
Then for all sufficiently large $n$, since $\log  B_n\sim \log n$,
\begin{equation} \P \{|S_n-m_n|\ge  \sqrt{2bB_n\log   n} \}\le
2\,  n^{-b'}
.
\end{equation}

 We have \begin{align*}   \Big|  \P \{S_n \in\mathcal P \}&  -  \P \{S_n \in \mathcal P\cap[m_n- \sqrt{ 2bB_n\log n},m_n+\sqrt{ 2bB_n\log n}] \} \Big| 
\cr &\le    \P \{|S_n-m_n|\ge \sqrt{ 2bB_n\log n} \}
\cr &\le   n^{-b'}.
   \end{align*}
Further,
\begin{eqnarray*}& & \Big|  \P \{S_n \in \mathcal P\cap[m_n-\sqrt{ 2bB_n\log n},m_n+\sqrt{ 2bB_n\log n}] \}
\cr & &-\sum_{\k\in\mathcal P\cap[m_n-\sqrt{ 2bB_n\log n},m_n+\sqrt{ 2bB_n\log n}]} \frac{ e^{- \frac{(\k- m_n)^2}{    2  B_n   } } }{ \sqrt{2\pi  B_n  } } 
\Big|
 \cr &\le & \sum_{\k\in\mathcal P\cap[m_n-\sqrt{ 2bB_n\log n},m_n+\sqrt{ 2bB_n\log n}]}\Big|\P \{S_n =\kappa \} -\frac{ e^{- \frac{(\k- m_n)^2}{    2  B_n   } } }{ \sqrt{2\pi B_n   } }     \Big|   
  \cr     & \le &    C\,\#\big\{\mathcal P\cap[m_n-\sqrt{ 2bB_n\log n},m_n+\sqrt{ 2bB_n\log n}] \big\}\cdot
 \frac{(\log n)^{3/2}}{n} 
  \cr     & \le & 
  C\, \sqrt b\ \frac{(\log n)^{3/2 }}{\sqrt n} 
.
  \end{eqnarray*}
Therefore
\begin{equation} \label{3} \Big|  \P \{S_n \in \mathcal P  \}
 -\sum_{\k\in\mathcal P\cap[m_n-\sqrt{ 2bB_n\log n},m_n+\sqrt{ 2bB_n\log n}]} \frac{ e^{- \frac{(\k- m_n)^2}{    2  B_n   } } }{ \sqrt{2\pi  B_n  } } 
\Big|
     \, \le \, 
   C\,\sqrt b\  \frac{(\log n)^{3/2 }}{\sqrt n} 
.
  \end{equation}
 By    expressing the inner sum as a Riemann-Stieltjes integral  \cite[p.\,77]{A}, we get

\begin{equation}\label{abel.base}   \P \{S_n \in \mathcal P  \}
\, =\, \int_{m_n-\sqrt{ 2bB_n\log n}}^{m_n+\sqrt{ 2bB_n\log n}}
  \,\frac{ e^{- \frac{(t- m_n)^2}{    2  B_n   } } }{ \sqrt{2\pi  B_n  } }\, \dd \pi(t)
    + \mathcal O\Big( \frac{(\log n)^{3/2 }}{\sqrt n} \Big)
.
  \end{equation}

\vskip 5 pt (ii) We note that 
  \begin{eqnarray} \label{int.min}
  \int_{m_n-\sqrt{ 2bB_n }}^{m_n+\sqrt{ 2bB_n }}
  \,\frac{ e^{- \frac{(t- m_n)^2}{    2  B_n   } } }{ \sqrt{2\pi  B_n  } } \dd \pi(t)&\ge & L \  \frac{ \pi(m_n+\sqrt{ 2bB_n })-\pi(m_n-\sqrt{ 2bB_n })}{ \sqrt{   B_n  }},
\end{eqnarray}
with $L=\frac{e^{-b }}{  \sqrt{2\pi }}$. 
\vskip 3 pt
 We use a well-known result of Selberg     \cite[Th.\,1]{Se}. Let    $\Phi(x)$ be positive and increasing and such that $\frac{\Phi(x)}{x} $  decreasing for $x>0$. Further assume that 
 \begin{eqnarray} \label{phi.ab}
{\rm (a)}\quad \lim_{x\to\infty}  \frac{\Phi(x)}{x}= 0 \qq\qq {\rm (b)}\quad\liminf_{x\to\infty}\frac{\log \Phi(x)}{\log x}>\frac{19}{77}.
\end{eqnarray}
Then there exists a (Borel measurable) set $\mathcal S$   of positive reals of density one such that
 \begin{equation}  \label{phi.ab.enonce}\lim_{\mathcal S\ni x\to \infty}\frac{\pi(x+\Phi (x))-\pi(x)}{({\Phi(x)}/{\log x})}  =1.
\end{equation}
 Let $\Phi(x) = \sqrt{2b  x}$. Then the   requirements in \eqref{phi.ab} are fulfilled, and so \eqref{phi.ab.enonce} holds true. 
  Now let $C$ be some possibly large but fixed positive number, as well as some positive real $\d<1/2$.    
    \vskip 5 pt   By \eqref{phi.ab.enonce}, the  set of $x>0$, call it $\mathcal S_\d$, such   that 
    \begin{equation}\label{pi.phi.d}
\pi(x+\Phi(x))-\pi(x) \, \ge  \,(1-\d)\, \frac{ \Phi(x)}{\log x}.
 \end{equation}
  has    density 1. Note that if $\d'<\d''$ then $\mathcal S_{\d'}\subseteq \mathcal S_{\d''}$.   
 \vskip 3 pt Pick  $x\in\mathcal S_\d$ and let $\D(x)= \pi(x+\Phi(x))-\pi(x)$. Note that if $|y-x|\le C$,   $\big|\Phi(y)-\Phi(x)\big|=o(1)$ for $x$   large.
 Thus  for every $y\in [x-C,x+C]$, $\big|\D(y)-\D(x)\big|\, \le C'$, and so 
$$\D(y)   \, \ge  \,(1-\d)\, \frac{ \Phi(x)}{\log x}-C'.$$
 the constant $C'$ depending on $C$ only. 
 As $\big|\frac{\Phi(x)}{\log x}- \frac{\Phi(y)}{\log y}\big|\le 
\frac{C}{2\sqrt x \log x}$, we have 
\begin{equation*}
\D(y)   \, \ge  \,(1-\d)\, \frac{ \Phi(y)}{\log y}-C' -\frac{C}{2\sqrt x \log x}.
 \end{equation*} 
  
Thus every $y\in [x-C,x+C]$ also satisfies \begin{equation}\label{pi.phi.d.a}
\D(y)   \, \ge  \,(1-2\d)\, \frac{ \Phi(y)}{\log y},
 \end{equation} 
 if  $x $ is large enough.
 
\vskip 3pt   
Let $\nu=\nu(x)$ be the  unique integer   such that $m_{\nu-1}< x\le m_{\nu}$. As $m_{\nu}-m_{\nu-1}=o(1)$,  $\nu \to \infty$,  it follows that $m_{\nu}\in [x-C,x+C]$ provided that   $x$ is large enough, in which case we have by   \eqref{pi.phi.d.a},
  \begin{equation}\label{pi.phi.d.b}
 \frac{\pi(m_{\nu}+\Phi(m_{\nu}))-\pi(m_{\nu})}{\Phi(m_{\nu})}  \ge  \frac{1-2\d}{\log m_{\nu}}.
 \end{equation} 
   Let $X\ge 1$ be a large positive integer and $\e$  a small positive real. The number $N(X)$ of intervals $]\m-1,\m]$, $\m\le X$ such that $\mathcal S_\d \cap ]\m-1,\m]\neq \emptyset$ verifies $N(X)/X\sim 1$, $X\to \infty$, since $\mathcal S_\d$ has density 1. 
 
 Given such an $\m\le X$, pick $x\in \mathcal S_\d \cap ]\m-1,\m]$. 
 We know (recalling that $m_{\nu}-m_{\nu-1}=o(1)$,  $\nu \to \infty$) that some $m_{\nu}$, $\nu=\nu(x)$ belongs to   $ ]\m-1-\e,\m+\e]$, and that \eqref{pi.phi.d.b} is satisfied.   The union of these intervals $[\m-1-\e,\m+\e ]$ is contained in $[1-\e , X+\e ]$. It follows that the number of $\nu$ such that \eqref{pi.phi.d.b} is satisfied, forms a set of density 1. 
  
 \vskip 4 pt  We now use an induction argument in order to replace $2\d$ in  \eqref{pi.phi.d.b} by a quantity $\e(\nu)$ which  tends to 0 as $\nu$ tends to infinity  along some other set of density 1, which we shall build explicitly.    Let $\mathcal T_n$ be the set   of $\nu$'s of density 1, corresponding to $\d=\frac1n$, $n\ge 3$. Let $X_3$ be large enough so   that $\#\{\mathcal T_3\cap [1, X ]\}\ge X (1-1/3)$ for all $X\ge X_3$. Next let  $X_4>X_3$ be sufficiently large  so   that $\#\{\mathcal T_4\cap [X_3, X ]\}\ge X (1-1/4)$ for all $X\ge X_4$. Like this we manufacture an increasing sequence $X_j$,   verifying for all $j\ge 3$,
  $$\#\{\mathcal T_j\cap [X_{j-1}, X ]\}\ge X (1-1/j), \qq \qq {\rm for\ all}\  X\ge X_j .$$
  The resulting set 
  $$ \mathcal T= \bigcup_{j=3}^\infty \mathcal T_j\cap [X_{j-1}, X_j ]$$
  has density 1 and further we have the inclusions 
  $$\mathcal T\cap [X_{l-1}, \infty )=\bigcup_{j=l}^\infty \mathcal T_j\cap [X_{j-1}, X_j ]\subset \mathcal T_l\cap\bigcup_{j=l}^\infty   [X_{j-1}, \infty )=\mathcal T_l\cap [X_{l-1},\infty),\qq\quad l\ge 4,$$
as   the sets $\mathcal T_j$ are decreasing with $j$ by definition.   

\vskip 3 pt We finally have  by \eqref{int.min},
  \begin{equation}\label{pi.phi.d.b.1}
 \frac{\pi(m_{\nu}+\Phi(m_{\nu}))-\pi(m_{\nu})}{\Phi(m_{\nu})}  \ge  \frac{1-\e(\nu)}{\log m_{\nu}},
 \end{equation}
along $\mathcal T$, for some sequence of reals $\e(\nu)\downarrow 0$ as $\nu\to \infty$.
 
\vskip 3 pt Therefore 
  \begin{eqnarray}\label{base} 
  \int_{m_\nu-\sqrt{ 2bB_\nu }}^{m_\nu+\sqrt{ 2bB_\nu }}
  \,\frac{ e^{- \frac{(t- m_\nu)^2}{    2  B_\nu   } } }{ \sqrt{2\pi  B_\nu  } } \dd \pi(t)&\ge & L \  \frac{ \pi(m_\nu+\sqrt{ 2bB_\nu })-\pi(m_\nu-\sqrt{ 2bB_\nu })}{ \sqrt{   B_\nu  }}\cr & \ge &  L\, \frac{1-\e(\nu)}{\log m_{\nu}},
\end{eqnarray}
for all $\nu \in\mathcal T$, recalling that $L=\frac{e^{-b }}{  \sqrt{2\pi }}$.

Also  \begin{equation}\label{abel.base,proof}  
\liminf_{  \mathcal T\ni \nu\to\infty} (\log \nu)\P \{S_\nu\ \hbox{prime}  \}
\ge  \frac{1  }{ \sqrt{2\pi e}\, } .
 \end{equation}
  It further follows from \eqref{abel.base} that  
\begin{equation}\label{abel.base,,proof}  
 \P \{S_\nu\ \hbox{prime}  \}
\, =\, \big(1+ o( 1)\big)
\int_{m_\nu-\sqrt{ 2bB_\nu\log \nu}}^{m_\nu+\sqrt{ 2bB_\nu\log \nu}}
  \,\frac{ e^{- \frac{(t- m_\nu)^2}{    2  B_\nu   } } }{ \sqrt{2\pi  B_\nu  } }\, \dd \pi(t),
  \end{equation}
   all $\nu\in \mathcal T$. This achieves the proof of Theorem \ref{Snprime.M}.
   \end{proof}
\vskip 12  pt 
Some remarks: estimate \eqref{phi.ab.enonce} extends the PNT to smalls intervals $[x,x+\Phi (x)]$ for almost all $x$. Selberg (developing Cram\'er's first results \cite{C2}) proved with Theorem 4 in \cite{Se} a much stronger result since an error term is provided. Assuming the RH, he proved  that  for any fixed $\vartheta>0$, \eqref{huxley} is true for almost all $x$. This is an easy  consequence of the very sharp Theorem 1 in \cite{Se}.
 The approach used, as well as the   alternate approach in Richards \cite{Ri}, seem not allow one to treat the   question whether there exists a version of the PNT with an error term valid for almost all integers. %, which to our knowledge is left open. 
 This question is in relation with the one on the sensitivity of the error term to subsequences (cf. Introduction, Theorems \ref{cramer.LIL.sub}, \ref{cramer.small.ampl.t1}).

%%%%
%%%%
\section{Quasi-primality of ${S_n}$: Proof of Theorem \ref{cramer.quasi.prime.P}.}\label{7} 

Theorem \ref{cramer.quasi.prime.P} is   a direct consequence of a more general result, which we shall prove now. 
Let     $2<\l_1<\l_2<\ldots$ be an increasing sequence of reals. 
Let $\{\zeta_j,j\ge 1\}$ be a sequence of independent binomial random variables defined by $\P\{\zeta_j=1\}= \frac{1}{\l_j}=1-\P\{\zeta_j=0\}$.  
   
\begin{theorem}\label{cramer.quasi.prime} Let $T_k=\sum_{j=1}^k  \zeta_j$, $k\ge 1$. Assume that  $\m_k=  \sum_{j=1}^k \l_j^{-1}\uparrow\infty$ with  $k$. For any $0<\d<1$, we have for any $0<\eta<1$, and all $k$ large enough and   $ \zeta_0\le  \zeta\le \exp\big\{ \frac{\log(2\d \m_k)}{\log\log(2\d \m_k)}\big\}$, 
\begin{eqnarray*}  \P\big\{  T_k \hbox{ is $\zeta$-quasiprime}\big\}   &\ge& (1-\eta) \,  \frac{ e^{-\gamma} }{  \log
\zeta },\end{eqnarray*} 
  where $\gamma$ is  Euler's constant and $c$ is a positive constant.
\end{theorem}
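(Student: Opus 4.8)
The plan is to estimate $\P\{T_k\ \text{is }\zeta\text{-quasiprime}\}=\sum_{m}\P\{T_k=m\}\,\mathbf{1}\{(m,\Pi_\zeta)=1\}$ by combining a short--interval sieve lower bound for $\zeta$-quasiprimes with the concentration of the law of $T_k$, the two being glued together by summation by parts. Fix $\eta\in(0,1)$ and pick a small auxiliary $\eta'=\eta'(\eta)$ with $(1-\eta')^3\ge 1-\eta$. Two structural facts about $T_k$ will be used. Since $\l_j>2$, we have $\vartheta_{\zeta_j}=\P\{\zeta_j=0\}\wedge\P\{\zeta_j=1\}=\l_j^{-1}$, so in the notation of Lemma \ref{ger1} one has $\Theta_k=\m_k\to\infty$; also $B_k=\m_k-\sum_{j\le k}\l_j^{-2}\ge\m_k/2$, so $B_k\asymp\m_k$. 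Hence Lemma \ref{ger1} applies with $D=1$ and $\psi(x)=|x|^3$ (giving $L_k=\m_k/B_k^{3/2}\asymp\m_k^{-1/2}$), and, combined with the Bernstein bound (Lemma 7.1 of \cite{P}) for $\kappa$ outside its range — or, more directly, with the Kolmogorov--Rogozin inequality, since $\sum_j(1-\max_x\P\{\zeta_j=x\})=\m_k$ — yields
$$
\max_{\kappa}\P\{T_k=\kappa\}=O\big(B_k^{-1/2}\big).
$$
Finally $\kappa\mapsto\P\{T_k=\kappa\}$ is unimodal, being the coefficient sequence of the real--rooted polynomial $\prod_{j\le k}(1-\l_j^{-1}+\l_j^{-1}z)$; fix a (largest) mode $m^\ast$.

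The sieve ingredient is the following uniform consequence of the fundamental lemma of the combinatorial (Brun) sieve: there are a constant $u=u(\eta')$ and a threshold $\zeta_0(\eta')$ such that for $\zeta\ge\zeta_0$ and \emph{every} integer interval $I$ with $|I|\ge\zeta^{u}$,
$$
\#\{m\in I:(m,\Pi_\zeta)=1\}\ \ge\ (1-\eta')\,|I|\prod_{p\le\zeta}\Big(1-\tfrac1p\Big),\qquad\text{and}\qquad\prod_{p\le\zeta}\Big(1-\tfrac1p\Big)=(1+o(1))\,\frac{e^{-\gamma}}{\log\zeta}
$$
by Mertens' theorem. The only place the hypothesis on $\zeta$ enters is now visible: since $u$ is an absolute constant while $\log\log\m_k\to\infty$, the bound $\zeta\le\exp\{\log(2\delta\m_k)/\log\log(2\delta\m_k)\}$ forces $u\log\zeta=o(\log\m_k)=o(\log B_k)$, whence $\rho_k:=\lceil\zeta^{u}\rceil=B_k^{o(1)}=o(\sqrt{B_k})$; combined with the concentration bound above this gives $\P\{|T_k-m^\ast|\le\rho_k\}\le(2\rho_k+1)\max_{\kappa}\P\{T_k=\kappa\}=O(\rho_k/\sqrt{B_k})\le\eta'$ for all $k$ large.

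To assemble, set $g(m)=\mathbf{1}\{(m,\Pi_\zeta)=1\}$ and $f(m)=\P\{T_k=m\}$, and perform summation by parts on $\{m\ge m^\ast\}$ and on $\{m<m^\ast\}$ separately, splitting at the mode so that $f(m)-f(m\pm1)$ keeps a fixed sign on each side. On $\{m\ge m^\ast\}$ the partial sums $\tilde G(M)=\sum_{m^\ast\le m\le M}g(m)$ are bounded below, by the sieve estimate, by $(1-\eta')(M-m^\ast-\rho_k)_+\prod_{p\le\zeta}(1-1/p)$; Abel summation against the non-increasing $f$ then gives $\sum_{m\ge m^\ast}f(m)g(m)\ge(1-\eta')\big(\prod_{p\le\zeta}(1-1/p)\big)\,\P\{T_k>m^\ast+\rho_k\}$, the relevant inner sum telescoping. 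The mirror argument on $\{m<m^\ast\}$ (where $f$ is non-decreasing) produces the analogue with $\P\{T_k<m^\ast-\rho_k\}$. Adding the two,
$$
\P\{T_k\ \text{is }\zeta\text{-quasiprime}\}\ \ge\ (1-\eta')\Big(\prod_{p\le\zeta}\big(1-\tfrac1p\big)\Big)\,\P\{|T_k-m^\ast|>\rho_k\}\ \ge\ (1-\eta')^3\,\frac{e^{-\gamma}}{\log\zeta}\ \ge\ (1-\eta)\,\frac{e^{-\gamma}}{\log\zeta}
$$
for all $\zeta_0\le\zeta\le\exp\{\log(2\delta\m_k)/\log\log(2\delta\m_k)\}$ and $k$ large, which is the assertion. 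Theorem \ref{cramer.quasi.prime.P} is then the special case obtained by applying this with the variables $\xi_j$, $j\ge8$ (the restriction $j\ge8$ being exactly what guarantees $\l_j=\log j>2$, and the replacement of $S_n$ by $S'_n$ immaterial for Cram\'er's conjecture), whose partial sums have mean $\sum_{8\le j\le n}1/\log j\sim n/\log n$, so the admissible range of $\zeta$ becomes $\zeta\le\exp\{c\log n/\log\log n\}$ for any $c<1$.

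I expect the genuinely delicate point to be the uniform short--interval sieve bound: it must hold independently of the location of $I$ and with a sifting level $u$ depending only on $\eta'$ — not on $\zeta$ or $k$ — for otherwise $\zeta^{u}$ would fail to be negligible against the concentration scale $\sqrt{B_k}$ of $T_k$ throughout the stated range of $\zeta$; this uniform control of the sieve dimension at the cost of only a factor $1-\eta'$ is precisely what allows a short--interval sieve estimate to be ``integrated against'' the (nearly constant on that scale) law of $T_k$. The local limit / concentration input (Lemma \ref{ger1}) and the Abel--summation bookkeeping are routine, the only side remark needed being the unimodality of the law of $T_k$, which is classical since Poisson--binomial laws are P\'olya frequency sequences.
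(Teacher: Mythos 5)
Your proof is correct, but it takes a genuinely different route from the paper's, and it is worth spelling out the contrast.

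The paper's argument is a randomization/factorization trick: since $\lambda_j>2$, one may write $\zeta_j=\check{\zeta_j}\,\epsilon_j$ almost surely, with $\{\epsilon_j\}$ i.i.d.\ fair Bernoulli and independent of $\{\check\zeta_j\}$; conditionally on the $\check\zeta_j$'s, $T_k$ is distributed as a plain Bernoulli sum $B_{N}$ with $N=\sum_j\check\zeta_j$. One then invokes the ready-made estimate of Theorem~\ref{Bernoulli.quasi.p} for $\P\{P^-(B_N)>\zeta\}$ and uses McDiarmid's inequality (Lemma~\ref{md}) to guarantee $N>\delta\check\mu_k$ with probability close to $1$, so that $\zeta$ falls in the admissible range $\zeta\le N^{c/\log\log N}$ of that theorem. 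This is short, but the entire sieve content is delegated to the cited result, and the constant $c$ in the $\zeta$-range is inherited from it. Your proof instead works directly with the law of $T_k$: you bound $\max_\kappa\P\{T_k=\kappa\}=O(B_k^{-1/2})$ via Rogozin/LLT, invoke unimodality of the Poisson--binomial (it is a P\'olya frequency sequence, hence log-concave), apply the fundamental lemma of the Brun sieve uniformly on short intervals of length $\zeta^{u(\eta')}$, and glue the two by Abel summation split at the mode so that the differences of $f$ keep a fixed sign on each side. This buys self-containedness (modulo the fundamental lemma and Mertens), avoids the $\epsilon_j$-randomization entirely, and in fact yields the conclusion on the wider range $\{\zeta:\zeta^{u(\eta')}=o(\sqrt{\mu_k})\}$, which contains the stated one since $\zeta^{u}\le(2\delta\mu_k)^{u/\log\log(2\delta\mu_k)}=\mu_k^{o(1)}$; in particular the extraneous constant $c$ appearing in the theorem's last sentence (a vestige of the paper's reduction) plays no role in your version.

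Two minor remarks on your write-up. First, you should make explicit that the fundamental lemma is applied with dimension $1$, $\omega(p)=1$, $r_d=O(1)$, so that with sieving level $D=\zeta^{u'}$ the error $\sum_{d<D,\,d\mid\Pi_\zeta}|r_d|=O(\zeta^{u'})$ is indeed $o(|I|/\log\zeta)$ when $|I|\ge\zeta^{u}$, $u>u'$, and the relative error $O(e^{-u'\log u'})$ is $\le\eta'$ for $u'=u'(\eta')$ large; this is exactly the uniformity you flag as the delicate point, and it does hold. Second, your Abel step produces $\P\{T_k>m^\ast+\rho_k\}+\P\{T_k<m^\ast-\rho_k\}$, and you then lower-bound this by $1-\eta'$ via the concentration estimate $\P\{|T_k-m^\ast|\le\rho_k\}\le(2\rho_k+1)\max_\kappa\P\{T_k=\kappa\}=O(\rho_k/\sqrt{B_k})$; since $m^\ast\asymp\mu_k\gg\rho_k$ for $k$ large, the sieved intervals on the left of the mode stay within the positive integers, so no boundary issue arises. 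With these points made explicit the argument is complete.
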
 

  The case $\l_j=\log (j+2)$, $j\ge 8$ corresponds to  the Cram\'er model, and we have in particular the following
  \begin{corollary}\label{cramer.quasi.prime.cor}
We have for any $0<\eta<1$, and all $n$ large enough  and      $ \zeta_0\le  \zeta\le \exp\big\{ \frac{c\log n}{\log\log n}\big\}$, 
\begin{eqnarray*}  \P\big\{  S'_n\hbox{\ $\zeta$-quasiprime}\big\}   \,\ge  \, (1-\eta)   \frac{ e^{-\gamma} }{  \log
\zeta }. \end{eqnarray*}
 \end{corollary}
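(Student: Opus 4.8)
We shall deduce the statement from the more general Theorem~\ref{cramer.quasi.prime}: Corollary~\ref{cramer.quasi.prime.cor} (i.e. Theorem~\ref{cramer.quasi.prime.P}) is the case $\lambda_j=\log(j+2)$, $j\ge 8$, for which $\mu_k=\sum_{8\le j\le k}\tfrac1{\log(j+2)}\sim k/\log k$, so that $\tfrac{\log(2\delta\mu_k)}{\log\log(2\delta\mu_k)}=(1+o(1))\tfrac{\log n}{\log\log n}$ and the admissible range of $\zeta$ contains $[\zeta_0,\exp\{\tfrac{c\log n}{\log\log n}\}]$ for any fixed $c<1$ and $n$ large. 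Recalling that $m$ is $\zeta$-quasiprime iff $(m,\Pi_\zeta)=1$, i.e. no prime $p\le\zeta$ divides $m$, the quantity to be bounded below is
$$\P\big\{T_k\ \hbox{$\zeta$-quasiprime}\big\}\ =\ \P\{(T_k,\Pi_\zeta)=1\}, $$
and the plan is to estimate this by a sieve, the Fourier decay of the law of $T_k$ modulo small integers playing the role of the level of distribution.

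The key input is: for every squarefree $d\mid\Pi_\zeta$,
$$\Big|\,\P\{d\mid T_k\}-\tfrac1d\,\Big|\ \le\ \exp\!\Big\{-\,c_1\,\frac{\mu_k}{d^{2}}\Big\},$$
with $c_1>0$ depending only on $\delta$. Indeed, Fourier inversion on $\Z/d\Z$ gives $\P\{d\mid T_k\}=\tfrac1d\sum_{t=0}^{d-1}\prod_{j\le k}\E e^{2\pi i t\zeta_j/d}$; the term $t=0$ produces $1/d$, and for $1\le t\le d-1$ one has $|\E e^{2\pi i t\zeta_j/d}|^{2}=1-\tfrac2{\lambda_j}(1-\tfrac1{\lambda_j})(1-\cos\tfrac{2\pi t}{d})$, whence $\prod_{j\le k}|\E e^{2\pi i t\zeta_j/d}|\le\exp\{-\sigma_k^{2}(1-\cos\tfrac{2\pi t}{d})\}$ with $\sigma_k^{2}=\sum_{j\le k}\tfrac1{\lambda_j}(1-\tfrac1{\lambda_j})$. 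Since $\lambda_j>2$ and $\mu_k\uparrow\infty$ one has $\sigma_k^{2}\ge 2\delta\mu_k$ for $k$ large (this is the origin of the quantity $2\delta\mu_k$ in the statement), while $1-\cos\tfrac{2\pi t}{d}\ge 8/d^{2}$ for $1\le t\le d-1$, and there are fewer than $d$ such terms.

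I would then run a Brun-truncated inclusion–exclusion over the events $\{p\mid T_k\}$, $p\le\zeta$. Truncating Bonferroni at an odd level $r$,
$$\P\{(T_k,\Pi_\zeta)=1\}\ \ge\ \sum_{\substack{d\mid\Pi_\zeta\\\omega(d)\le r}}\frac{\mu(d)}{d}\ +\ \sum_{\substack{d\mid\Pi_\zeta\\\omega(d)\le r}}\mu(d)\Big(\P\{d\mid T_k\}-\tfrac1d\Big).$$
In the first sum the tail $\sum_{\omega(d)>r}\mu(d)/d$ is, in absolute value, $\le\sum_{j>r}\tfrac1{j!}\big(\sum_{p\le\zeta}\tfrac1p\big)^{j}\ll\tfrac{(\log\log\zeta)^{r+1}}{(r+1)!}$ once $r$ exceeds a fixed multiple of $\log\log\zeta$; choosing $r=\lceil A\log\log\zeta\rceil$ with $A=A(\eta)$ large, this is $\le\tfrac\eta3\prod_{p\le\zeta}(1-\tfrac1p)$ for $\zeta\ge\zeta_1(\eta)$, so the first sum is $\ge(1-\tfrac\eta3)\prod_{p\le\zeta}(1-\tfrac1p)$. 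The second sum is, by the key input and $d\le\zeta^{r}$, at most $\sum_{j\le r}\binom{\pi(\zeta)}{j}e^{-c_1\mu_k/\zeta^{2j}}\le (r+1)\pi(\zeta)^{r}e^{-c_1\mu_k/\zeta^{2r}}$, and the hypothesis $\zeta\le\exp\{\tfrac{\log(2\delta\mu_k)}{\log\log(2\delta\mu_k)}\}$ is precisely what makes $r\log\zeta=o(\log\mu_k)$, hence $\mu_k/\zeta^{2r}\to\infty$ fast enough that this is $o(1/\log\zeta)$, so $\le\tfrac\eta3\prod_{p\le\zeta}(1-\tfrac1p)$ for $k$ large. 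Combining and invoking Mertens' third theorem, $\prod_{p\le\zeta}(1-\tfrac1p)=(1+o(1))e^{-\gamma}/\log\zeta\ge(1-\tfrac\eta3)e^{-\gamma}/\log\zeta$ for $\zeta\ge\zeta_0(\eta)$, gives $\P\{(T_k,\Pi_\zeta)=1\}\ge(1-\eta)e^{-\gamma}/\log\zeta$, as claimed.

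I expect the delicate point to be this last balancing: the truncation level $r$ must be taken $\gg\log\log\zeta$ so that the purely combinatorial sieve error beats the main term $\asymp 1/\log\zeta$, yet each increment of $r$ multiplies the denominator in the Fourier remainder $e^{-c_1\mu_k/\zeta^{2r}}$ by a factor $\zeta^{2}$; reconciling the two forces $\log\zeta\cdot\log\log\zeta\ll\log\mu_k$, and a careful accounting of the constants — in particular how $\delta$ enters through $\sigma_k^{2}\ge 2\delta\mu_k$ and how $\eta$ enters through $A(\eta)$ and $\zeta_0(\eta)$ — is what yields the precise window in the statement. The elementary Brun truncation could alternatively be replaced by the fundamental lemma of sieve theory with level of distribution $\asymp\sqrt{\mu_k}$, which streamlines this step; the combinatorial route is used here to keep the argument self-contained.
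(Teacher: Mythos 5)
Your proof takes a genuinely different route from the paper's. The paper establishes the general Theorem~\ref{cramer.quasi.prime} by a coupling/randomisation device: writing $\zeta_j=\check{\zeta}_j\,\e_j$ with $\e_j$ a fair Bernoulli variable independent of $\check{\zeta}_j$, so that conditionally on the $\check{\zeta}_j$'s the sum $T_k$ has the law of a symmetric Bernoulli sum $B_m$ with $m=\sum_{j\le k}\check{\zeta}_j$; it then invokes the uniform estimate for $\P\{P^-(B_m)>\zeta\}$ from \cite{W1} (Theorem~\ref{Bernoulli.quasi.p}) and controls $\P\{\sum\check{\zeta}_j\le\delta\check{\mu}_k\}$ by the Chernoff bound of Lemma~\ref{md}; the Corollary is then the specialisation $\lambda_j=\log(j+2)$. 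You instead re-prove the general theorem directly, from Fourier decay of the law of $T_k$ modulo $d$ plus a Brun truncation. The Fourier step is correct, and this is a legitimate, more self-contained line of attack.

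However, the balancing step as written contains a genuine error. You assert that the hypothesis $\zeta\le\exp\{\log(2\delta\mu_k)/\log\log(2\delta\mu_k)\}$ ``is precisely what makes $r\log\zeta=o(\log\mu_k)$''. This is false: with $r=\lceil A\log\log\zeta\rceil$ and $\zeta$ near the top of the range, $\log\log\zeta\sim\log\log\mu_k$ and $\log\zeta\sim\log\mu_k/\log\log\mu_k$, so
$$r\log\zeta\,\sim\,A\,\log\mu_k,$$
which is of the \emph{same} order as $\log\mu_k$, not smaller. Whether $\mu_k/\zeta^{2r}\to\infty$ therefore hinges on whether $2A<1$; but making the Bonferroni tail $\bigl|\sum_{\omega(d)>r}\mu(d)/d\bigr|$ smaller than a small multiple of $\prod_{p\le\zeta}(1-1/p)\asymp1/\log\zeta$ via Stirling forces $A(\log A-1)>1$, i.e. $A\gtrsim4$, which is incompatible. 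With $A$ fixed around $4$, the remainder $\pi(\zeta)^re^{-c_1\mu_k/\zeta^{2r}}$ is negligible only when $2Ac<1$, i.e. when the constant $c$ in the exponent range is chosen smaller than roughly $1/8$. So your argument, once the erroneous ``$o(\cdot)$'' is replaced by the correct bookkeeping, does prove the Corollary, but only after shrinking $c$ to a suitably small universal value; the paper's proof gets its $c$ from Theorem~\ref{Bernoulli.quasi.p} with no such further restriction. The alternative you gesture at in your last sentence --- the fundamental lemma with level $D\asymp\sqrt{\mu_k/\log\mu_k}$, which gives sifting parameter $\log D/\log\zeta\gtrsim\tfrac12\log\log\mu_k\to\infty$ and remainder $\le D\,e^{-c_1\mu_k/D^2}\to0$ --- avoids this constant-chasing entirely and would be the cleaner way to carry out your approach.
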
 

\vskip 3 pt
 The proof is based on a randomization argument, and uses  
 the  Lemma  below.
\begin{lemma}\cite[Theorem 2.3]{di} \label{md}
Let $X_1, \dots, X_k$     be independent random variables, with $0 \le X_j \le 1$ for each $j$.
Let $Y_k = \sum_{j=1}^k X_j$ and $\mu = \E Y_k$. For any $\e >0$,
 \begin{eqnarray*}
(a) &&
 \P\big\{Y_k \ge  (1+\e)\mu\big\}
    \le  e^{- \frac{\e^2\mu}{2(1+ \e/3) } } .
\cr (b) & &\P\big\{Y_k \le  (1-\e)\mu\big\}\le    e^{- \frac{\e^2\mu}{2}}.
 \end{eqnarray*}
\end{lemma}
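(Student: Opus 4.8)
The plan is to run the classical exponential-moment (Chernoff) method; inequalities of this precise shape are a variant of the Bennett--Bernstein bounds, and since the statement is quoted from \cite{di}, no argument specific to the present setting is needed. The only place the hypothesis $0\le X_j\le 1$ enters is through the moment generating function estimate
\[
\E e^{tY_k}\ \le\ e^{\mu(e^{t}-1)},\qquad t\in\R.
\]
To obtain it I would first note that, by convexity of $u\mapsto e^{tu}$ on $[0,1]$ (write $x=(1-x)\cdot 0+x\cdot 1$), one has $e^{tx}\le 1+x(e^{t}-1)$ for all $x\in[0,1]$ and all real $t$; taking expectations and using $1+u\le e^{u}$ gives $\E e^{tX_j}\le 1+p_j(e^{t}-1)\le e^{p_j(e^{t}-1)}$ with $p_j=\E X_j$, and independence together with $\sum_{j\le k}p_j=\mu$ yields the display.

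For part (a), Markov's inequality applied to $e^{tY_k}$ with $t>0$ gives
\[
\P\{Y_k\ge(1+\e)\mu\}\ \le\ e^{-t(1+\e)\mu}\,\E e^{tY_k}\ \le\ \exp\!\big(\mu(e^{t}-1-t(1+\e))\big),
\]
and I would minimize the exponent over $t>0$; the minimizer is $t=\log(1+\e)$, which produces the bound $\exp\!\big(\mu(\e-(1+\e)\log(1+\e))\big)$. It then remains to check the elementary one-variable inequality
\[
\e-(1+\e)\log(1+\e)\ \le\ -\,\frac{\e^{2}}{2(1+\e/3)},\qquad\e>0,
\]
for instance by verifying that $h(\e):=(1+\e)\log(1+\e)-\e-\frac{\e^{2}}{2(1+\e/3)}$ vanishes to second order at $\e=0$ and is nondecreasing on $(0,\infty)$. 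For part (b) the symmetric choice---applying Markov to $e^{-tY_k}$ with $t>0$ and using the same chord bound with $-t$ in place of $t$---gives
\[
\P\{Y_k\le(1-\e)\mu\}\ \le\ e^{t(1-\e)\mu}\,\E e^{-tY_k}\ \le\ \exp\!\big(\mu(e^{-t}-1+t(1-\e))\big),
\]
whose exponent is minimized at $t=-\log(1-\e)>0$ (legitimate since $0<\e<1$), giving $\exp\!\big(\mu(-\e-(1-\e)\log(1-\e))\big)$. Here one concludes at once, since $(1-\e)\log(1-\e)+\e-\frac{\e^{2}}{2}=\sum_{m\ge 3}\frac{\e^{m}}{m(m-1)}\ge 0$, so that $-\e-(1-\e)\log(1-\e)\le-\e^{2}/2$.

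I expect the only genuinely delicate step to be the inequality in (a) relating the optimized exponent $\e-(1+\e)\log(1+\e)$ to the Bernstein-type quantity $-\e^{2}/\big(2(1+\e/3)\big)$: unlike its counterpart in (b), the relevant power series is alternating, so the comparison has to be carried out via monotonicity of the auxiliary function $h$ (or an equivalent one-variable estimate) rather than term by term. Everything else---the MGF estimate, the two optimizations over $t$, and part (b)---is routine bookkeeping.
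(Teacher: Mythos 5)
Your argument is correct: the MGF bound $\E e^{tY_k}\le e^{\mu(e^t-1)}$ via the chord bound on $[0,1]$, the optimization at $t=\log(1+\varepsilon)$ (resp. $t=-\log(1-\varepsilon)$), and the two elementary comparisons of the optimized exponents are all sound (the delicate inequality in (a) does follow from $h(0)=h'(0)=0$ and $h''(\varepsilon)=\frac1{1+\varepsilon}-\frac{27}{(3+\varepsilon)^3}\ge0$). The paper gives no proof of this lemma — it is quoted from MacDiarmid's survey — and your Chernoff--Bernstein route is exactly the standard argument used there, so there is nothing to reconcile beyond the trivial remark that case (b) with $\varepsilon\ge1$ is degenerate and holds directly.
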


%\stackrel{ }{ }
\begin{proof}[Proof of Theorem \ref{cramer.quasi.prime}] Let $\{\e_j,j\ge 1\}$ be a sequence of independent Bernoulli random variables. Let $\{ \check{\zeta_j},j\ge 1\}$ be another sequence of independent random variables, which is  independent from the sequence $\{ \e_j,j\ge 1\}$, and such that $\zeta_j \stackrel{{\rm a.s.}}{=}\check{\zeta_j} \,\e_{j}$ for all $j$.     Let  $\check{\E}$ (resp. $\check{\P}$) denote the conditional expectation  (resp. conditional probability) with respect to the $\s$-field generated by the sequence $\check{\zeta_j}$, $j\ge 1$. Write $ 
    T_k =\,\sum_{j=1}^k  \check{\zeta_j} \,\e_{j}$.
We have 
 \begin{equation}\label{block products.primality}
   \P\big\{P^-( T_k ) > \zeta\big\}  \,=\,
  \check{\E}   \P\big\{P^-\big(  \sum_{j=1}^k  \check{\zeta_j} \,
 \e_j\big)> \zeta\big\}
     \,=\, \check{\E}  \P\big\{P^-\big( B_{\sum_{j=1}^k  \check{\zeta_j}}\big)> \zeta\big\}.
  \end{equation}
According to Theorem \ref{Bernoulli.quasi.p}, there exist a positive real $c $ and  positive constants $C_0$, $\zeta_0$ such that for $k$ large
enough   we have,
\begin{equation}\label{P-Bn,} 
\Big|\hskip1pt\P\big\{ P^-( B_k ) > \zeta \big\}-     \frac{ e^{-\gamma} }{  \log
\zeta }\,\Big|  \le  \frac{ C_0 }{   \log^2 \zeta } \qq \qquad (\,\zeta_0\le  \zeta\le k^{c/\log\log k}\,) .
\end{equation}

Let $0<\d< 1$ and set
 \begin{eqnarray*}   A_k=\Big\{  \exp\Big\{ \frac{c\log(\sum_{j=1}^k  \check{\zeta_j})}{\log\log(\sum_{j=1}^k  \check{\zeta_j})}\Big\}    \ge \zeta\Big\},\qq\quad  C_k=\Big\{ \sum_{j=1}^k  \check{\zeta_j}> 2\d \m_k\Big\} .\end{eqnarray*}
Assume that 
$$ \zeta_0\le  \zeta\le \exp\Big\{ \frac{c\log(2\d \m_k)}{\log\log(2\d \m_k)}\Big\}
.$$
On $C_k$,
$$  \exp\Big\{ \frac{c\log(\sum_{j=1}^k  \check{\zeta_j})}{\log\log(\sum_{j=1}^k  \check{\zeta_j})}\Big\}> \exp\Big\{ \frac{c\log(2\d \m_k)}{\log\log(2\d \m_k)}\Big\}\ge \zeta,
$$
and so $C_k\subset A_k$. 
Therefore on $C_k$,
$$\Big| \P\big\{P^-\big( B_{\sum_{j=1}^k  \check{\zeta_j}}\big)> \zeta\big\}-     \frac{ e^{-\gamma} }{  \log \zeta }\,\Big|  \le  \frac{ C_0  }{   \log^2 \zeta }.$$
We have
 \begin{eqnarray*}  \P\big\{P^-( T_k ) > \zeta\big\}  &=& \check{\E}  \P\big\{P^-\big( B_{\sum_{j=1}^k  \check{\zeta_j}}\big)> \zeta\big\}\,\ge\, \check{\E}  \chi\{C_k\}\,\P\big\{P^-\big( B_{\sum_{j=1}^k  \check{\zeta_j}}\big)> \zeta\big\}
 \cr &\ge& \big( \frac{ e^{-\gamma} }{  \log
\zeta }- \frac{ C_0 }{    \log^2 \zeta }  \big)\,\P\{C_k\} .\end{eqnarray*} 
Consequently, for $ \zeta_0\le  \zeta\le \exp\Big\{ \frac{c\log(2\d \m_k)}{\log\log(2\d \m_k)}\Big\}$,
\begin{eqnarray*}  \P\big\{P^-( T_k ) > \zeta\big\}   &\ge& \big( \frac{ e^{-\gamma} }{   \log
\zeta }- \frac{ C_0 }{    \log^2 \zeta }  \big)\,\P\{C_k\}. \end{eqnarray*} 

We apply  Lemma \ref{md} with $X_j=\check {\zeta_j}$. The random variables  $\check{\zeta_j}$ are independent and  verify  $\P\{ \check{\zeta_j} =1\} =1-\P\{ \check{\zeta_j} =0\}= 2\l_j^{-1}$. Further let $\check{\m}_k =\E \sum_{j=1}^k \check{\zeta_j}  =2\sum_{j=1}^k \l_j^{-1}$. By assumption $\sum_{j=1}^k \l_j^{-1}\uparrow\infty$ with  $k$. 
\vskip 3 pt 
Let $0< \rho<1$. By Lemma \ref{md}, 
\begin{equation}\label{Ck.min} \P\Big\{ \sum_{j=1}^k  \check{\zeta_j}\le \d\,\check{\m}_k \Big\}\le e^{- \frac{(1-\d)^2\check{\m}_k}{2}} \le \rho, 
\end{equation}
for all $k\ge k_\rho$ say. Thus
\begin{equation}\label{Ck.min,}\P\{C_k\}=\P\Big\{  \sum_{j=1}^k  \check{\zeta_j}> \d\check{\m}_k  \Big\} \ge 1-\rho. 
\end{equation}
We therefore arrive at
\begin{eqnarray*}  \P\big\{T_k \hbox{ is $\zeta$-quasiprime}\big\}   &\ge& (1-\rho) \big( \frac{ e^{-\gamma} }{   \log
\zeta }- \frac{ C_0 }{    \log^2 \zeta }  \big)\,  \end{eqnarray*} 
for any $ \zeta_0\le  \zeta\le \exp\big\{ \frac{c\log(2\d \m_k)}{\log\log(2\d \m_k)}\big\}$. As $\rho $ can be a small as we wish, we can state that for any given $0<\d<1$, we have for any $0<\eta<1$, and all $k$ large enough and any $ \zeta_0\le  \zeta\le \exp\big\{ \frac{c\log(2\d \m_k)}{\log\log(2\d \m_k)}\big\}$, 
\begin{eqnarray*}  \P\big\{T_k \hbox{ is $\zeta$-quasiprime}\big\}   &\ge& (1-\eta) \,  \frac{ e^{-\gamma} }{   \log
\zeta }. \end{eqnarray*} 
 
\end{proof} 

\begin{theorem}\label{Bnprime.nk}  Let $(n_k)_{k\ge 1}$ be an increasing sequence of integers such that,  
\begin{equation*} 
\sum_{k\ge 1}    \frac{   \log\log n_k }{      \log  n_k} \ <\ \infty\, .
\end{equation*}
Then, \begin{equation*} 
\P\big\{   T_{n_k}  \, \hbox{not prime, $k$ ultimately} \big\}\ =\ 1.
\end{equation*}
In particular, 
\begin{equation*} 
\P\big\{   S'_{n_k}  \, \hbox{not prime, $k$ ultimately} \big\}\ =\ 1.
\end{equation*}
\end{theorem}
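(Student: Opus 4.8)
The plan is to apply the first Borel--Cantelli lemma: it is enough to prove that
\[
\sum_{k\ge 1}\P\big\{T_{n_k}\ \text{prime}\big\}<\infty ,
\]
since then, with probability one, only finitely many of the events $\{T_{n_k}\ \text{prime}\}$ occur, which is precisely the assertion; the ``in particular'' statement is the Cram\'er specialisation $\l_j=\log(j+2)$, for which $T_{n_k}$ reads as $S'_{n_k}$.

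To bound $\P\{T_m\ \text{prime}\}$ for a fixed $m$ I would re-use the randomisation of the proof of Theorem~\ref{cramer.quasi.prime}: write $\zeta_j\stackrel{{\rm a.s.}}{=}\check\zeta_j\,\e_j$ with $\{\e_j\}$ i.i.d.\ Bernoulli$(1/2)$ independent of the independent family $\{\check\zeta_j\}$, $\P\{\check\zeta_j=1\}=2\l_j^{-1}$, so that conditionally on $\check\zeta=(\check\zeta_j)_j$ one has $T_m\dist B_N$, the Bernoulli-model sum of $N=\sum_{j=1}^m\check\zeta_j$ i.i.d.\ Bernoulli$(1/2)$ variables. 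For every real $\zeta\ge 2$ one has the elementary inclusion
\[
\{B_n\ \text{prime}\}\ \subseteq\ \{B_n\le\zeta\}\ \cup\ \{P^-(B_n)>\zeta\},
\]
so the problem reduces to two quantities for $B_n$: being atypically small, and being $\zeta$-quasiprime. Fix $0<\d<1$ and set $C_m=\{N>2\d\m_m\}$; since $\sum_{j\le m}\check\zeta_j$ has mean $\check\m_m=2\m_m$, Lemma~\ref{md}(b) gives $\P\{C_m^c\}\le e^{-(1-\d)^2\m_m}$. On $C_m$ take $\zeta=\zeta_m:=\exp\{c\,\tfrac{\log(2\d\m_m)}{\log\log(2\d\m_m)}\}$, with $c$ the constant of Theorem~\ref{Bernoulli.quasi.p}; for $m$ large this exceeds $\zeta_0$, equals $(2\d\m_m)^{c/\log\log(2\d\m_m)}=o(\m_m)$, and (as $t\mapsto\log t/\log\log t$ is increasing) lies below $n^{c/\log\log n}$ for every integer $n>2\d\m_m$, so that by Theorem~\ref{Bernoulli.quasi.p}
\[
\sup_{n>2\d\m_m}\P\{P^-(B_n)>\zeta_m\}\ \le\ \frac{e^{-\gamma}}{\log\zeta_m}+\frac{C_0}{\log^2\zeta_m}\ \le\ \frac{C_1}{\log\zeta_m}.
\]
Since moreover $\zeta_m=o(\m_m)$, each such $B_n$ concentrates around $n/2>\d\m_m\gg\zeta_m$, and a second application of Lemma~\ref{md}(b) gives $\sup_{n>2\d\m_m}\P\{B_n\le\zeta_m\}\le e^{-\d\m_m/8}$. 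Conditioning on $\check\zeta$, splitting over $C_m$, and combining,
\[
\P\{T_m\ \text{prime}\}=\check{\E}\,\P\{B_N\ \text{prime}\}\ \le\ \P\{C_m^c\}+\sup_{n>2\d\m_m}\!\big(\P\{B_n\le\zeta_m\}+\P\{P^-(B_n)>\zeta_m\}\big)\ \le\ 2e^{-c_\d\m_m}+\frac{C_1\log\log(2\d\m_m)}{\log(2\d\m_m)} ,
\]
with $c_\d=\min\{(1-\d)^2,\d/8\}>0$, for all $m$ large.

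It then remains to take $m=n_k$ and sum. In the Cram\'er case $\l_j=\log(j+2)$ one has $\m_m\sim m/\log m$, hence $\log(2\d\m_{n_k})\sim\log n_k$ and $\log\log(2\d\m_{n_k})\sim\log\log n_k$, so the bound becomes $\P\{S'_{n_k}\ \text{prime}\}\le 2e^{-c_\d' n_k/\log n_k}+\mathcal O\big(\tfrac{\log\log n_k}{\log n_k}\big)$. As the $n_k$ are strictly increasing integers, $n_k\ge k$, whence $\sum_k e^{-c_\d' n_k/\log n_k}\le\sum_k e^{-c_\d' k/\log k}<\infty$, while $\sum_k\frac{\log\log n_k}{\log n_k}<\infty$ is the hypothesis; the first Borel--Cantelli lemma finishes the proof. (For a general admissible $\l_j$ one argues verbatim with $\log\m_{n_k}$ in place of $\log n_k$, which is the same up to constants whenever $\m_m$ grows polynomially, as it does in the Cram\'er model.)

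The main obstacle is obtaining the estimate $\P\{B_n\ \text{prime}\}\lesssim\frac{\log\log n}{\log n}$ \emph{with the correct order of magnitude}: this is what forces $\zeta$ to be pushed to the very edge of the interval on which the two-sided quasiprime estimate of Theorem~\ref{Bernoulli.quasi.p} is available, and one must check in tandem that this $\zeta$ is still negligible against the typical size $\approx n/2$ of $B_n$, so that the ``small value'' event $\{B_n\le\zeta\}$ is super-exponentially rare. The randomisation together with the conditioning on $C_m$ is exactly the device that transfers both estimates, which hold for the i.i.d.\ Bernoulli$(1/2)$ walk $B_n$, back to the Cram\'er-type walk $T_m$; the rest is routine Chernoff bookkeeping plus Borel--Cantelli.
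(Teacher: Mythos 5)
Your proof is correct and takes essentially the same route as the paper: the randomisation $\zeta_j=\check\zeta_j\,\e_j$, the Chernoff control of $\sum_j\check\zeta_j$ via Lemma~\ref{md}, the bound $\P\{B_n\ \text{prime}\}=\mathcal O(\log\log n/\log n)$, and the first Borel--Cantelli lemma. The only difference is that the paper invokes Corollary~\ref{Bnprime.ubound} as a black box while you re-derive it from Theorem~\ref{Bernoulli.quasi.p}, adding an explicit Chernoff estimate for the event $\{B_n\le\zeta_m\}$; this is slightly more careful than the paper's one-line inclusion $\{B_n\ \text{prime}\}\subseteq\{P^-(B_n)>\zeta\}$, which tacitly ignores the (exponentially rare) possibility that $B_n$ is itself a prime $\le\zeta$.
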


\begin{proof} We also have
\begin{eqnarray} 
   \P\big\{T_k\ \hbox{prime}\big\}  &=& 
  \check{\E}   \P\big\{\sum_{j=1}^k  \check{\zeta_j} \, \e_j\ \hbox{prime}\big\}\,=\, \check{\E}  \P\big\{B_{\sum_{j=1}^k  \check{\zeta_j}}\ \hbox{ prime}\big\}
    \end{eqnarray}
By Corollary \ref{Bnprime.ubound} in Appendix \ref{appendix-3}, there exists an absolute constant $C_1$ such that for all $n$ large enough,
\begin{equation*} 
\P\big\{   B_n  \ \hbox{prime} \big\}\le C_1\,    \frac{   \log\log n }{   c  \log  n},
\end{equation*}
($c$ is the same constant as in Theorem \ref{Bernoulli.quasi.p}). This along with \eqref{Ck.min}, imply 
    \begin{eqnarray}\label{block products.primality.a}
   \P\big\{T_k\ \hbox{prime}\big\}  &=& 
  \check{\E}  \P\big\{B_{\sum_{j=1}^k  \check{\zeta_j}}\ \hbox{prime}\big\}
     \cr &\le & 
     \check{\P}  \{\sum_{j=1}^k  \check{\zeta_j}\le \d\,\check{\m}_k\big\}+
     C_1\, \check{\E} \chi\{\sum_{j=1}^k  \check{\zeta_j}>\d\,\check{\m}_k\}    \frac{   \log\log \sum_{j=1}^k  \check{\zeta_j}}{   c  \log  \sum_{j=1}^k  \check{\zeta_j}}
 \cr &\le & 
     e^{- \frac{(1-\d)^2\check{\m}_k}{2}}+
     C_1\,    \frac{   \log\log \d\,\check{\m}_k}{   c  \log \d\,\check{\m}_k}
\cr &\le &
     C(c,\d)\,    \frac{   \log\log \d\,\check{\m}_k}{   c  \log \d\,\check{\m}_k},
 \end{eqnarray}
 for all $k\ge \k(c,\d)$. 
    Theorem \ref{Bnprime.nk} now  follows from Borel-Cantelli lemma.
 \end{proof}

 \section{Primality of $  {P_n}$.}\label{4} 
 
 \subsection{The inclusion $ {\mathcal C\subset  \mathcal B}$.}  We use the fact (Introduction) that on a possibly larger probability space, $\xi_j=\check{\xi_j} \,\e_{j}$,  $j\ge 8$, almost surely,   where $\{\e_j,j\ge 8\}$ is a sequence of independent Bernoulli random variables and $\{ \check{\xi_j},j\ge 8\}$,   a sequence of independent binomial random variables  which is  independent from the sequence $\{ \e_j,j\ge 8\}$. This is well defined since    $2/\log j<1$, if $j\ge 8$. 
   The  indices such that $\check{\xi_j} \,\e_j=1$ are obviously contained in the set of indices    such that $\e_j=1$. So that if   $\mathcal C_1=\{ j\ge 8\!:\,\xi_j=1\}$, $\mathcal B=\{ j\ge 1\!:\e_j=1\}$ and $\mathcal B_1=\{ j\ge 8\!:\,\e_j=1\}$,   the inclusion
  \begin{equation}\label{inc1} \mathcal C_1\subset \mathcal B_1,
  \end{equation}
is satisfied with probability 1. Whence also,
\begin{proposition}\label{C1B1}   
The   inclusion 
$\mathcal C\subset  \mathcal B$ holds true with positive probability.  
\end{proposition}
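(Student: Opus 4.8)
The statement to prove is Proposition~\ref{C1B1}: the inclusion $\mathcal C\subset\mathcal B$ holds with positive probability. Here $\mathcal C=\{j\ge 3:\xi_j=1\}$ while $\mathcal B=\{j\ge 1:\e_j=1\}$, and the coupling $\xi_j=\check\xi_j\,\e_j$ is only available for $j\ge 8$ (since $2/\log j<1$ requires $j\ge 8$). The preceding display~\eqref{inc1} already gives $\mathcal C_1\subset\mathcal B_1$ almost surely, where $\mathcal C_1=\{j\ge 8:\xi_j=1\}$ and $\mathcal B_1=\{j\ge 8:\e_j=1\}$. So the only gap between $\mathcal C\subset\mathcal B$ and the already-established $\mathcal C_1\subset\mathcal B_1$ concerns the finitely many indices $j\in\{3,4,5,6,7\}$: we must handle the event that each of these low indices, if it lies in $\mathcal C$, also lies in $\mathcal B$.

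The plan is as follows. First I would enlarge the probability space so that, in addition to the coupling $\xi_j=\check\xi_j\,\e_j$ for $j\ge 8$ already used for \eqref{inc1}, we also have independent Bernoulli variables $\e_3,\dots,\e_7$ (with parameter $1/2$) defined on it, independent of everything else, so that $\mathcal B=\{j\ge 1:\e_j=1\}$ makes sense down to $j=1$. Then I would write
\[
\{\mathcal C\subset\mathcal B\}\ \supseteq\ \{\mathcal C_1\subset\mathcal B_1\}\cap\bigcap_{j=3}^{7}\big(\{\xi_j=0\}\cup\{\e_j=1\}\big).
\]
By \eqref{inc1} the first event has probability $1$. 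For each $j\in\{3,\dots,7\}$ the event $\{\xi_j=0\}\cup\{\e_j=1\}$ has probability at least $\P\{\e_j=1\}=1/2>0$, and these five events together with $\{\mathcal C_1\subset\mathcal B_1\}$ are independent (the $\e_j$, $3\le j\le 7$, are independent of the rest by construction). Hence
\[
\P\{\mathcal C\subset\mathcal B\}\ \ge\ 1\cdot\prod_{j=3}^{7}\P\big(\{\xi_j=0\}\cup\{\e_j=1\}\big)\ \ge\ \Big(\tfrac12\Big)^{5}\ >\ 0,
\]
which is exactly the claimed conclusion. (One can do slightly better by using $\P(\{\xi_j=0\}\cup\{\e_j=1\})=1-\P\{\xi_j=1\}\P\{\e_j=0\}=1-\tfrac1{2\log j}$, but any positive bound suffices.)

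There is essentially no obstacle here: the content is entirely in the coupling set up before \eqref{inc1}, and the proposition is a soft corollary saying that the finitely many uncontrolled small indices only cost a constant factor in probability. The one point to be careful about is the construction of the enlarged space — one must make sure the newly adjoined $\e_3,\dots,\e_7$ are genuinely independent of the already-existing $\e_j$ ($j\ge 8$) and $\check\xi_j$ ($j\ge 8$), which is automatic if we take a product extension. I would also remark that this only yields \emph{positive} probability, not probability one, and indeed $\P\{\mathcal C\subset\mathcal B\}=1$ is false since e.g. $\P\{\xi_j=1,\e_j=0\}>0$ for each of the low indices; this is why the statement is phrased with "positive probability", and it is all that is needed for the application (testing which infinite prime sequences are avoided).
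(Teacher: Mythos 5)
Your proof is correct and fills in the detail that the paper leaves implicit behind the phrase ``Whence also''. The core observation — that $\mathcal C_1\subset\mathcal B_1$ holds almost surely via the coupling for $j\ge 8$, and the five remaining indices $j\in\{3,\dots,7\}$ only cost a bounded positive factor in probability once the space is extended with independent Bernoulli variables $\e_3,\dots,\e_7$ — is exactly the intended argument.
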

\subsection{Instants of jump in the Bernoulli model.}  The instants of jump of the  sequence $ \{B_k,k\ge 1\}$  are defined as follows: Put  $\d_0=0$, $\D_0=0$ and for any integers
$\ell\ge 1$, $k\ge 1$,
\begin{equation}
  \d_\ell  =\inf \{ n\ge 1\!:\, \e_{n+\d_1+\dots+\d_{\ell-1}}=1 \},\qq   \D_k  =\d_1+\dots+\d_{k }.
 \end{equation} 
 We have $\mathcal B:=\{ j\ge 1\!:\e_j=1\}=\{\D_k,k\ge 1\}$. 
 Proposition \ref{C1B1} suggests to first study the probability that $\D_k$ be prime. We first establish some necessary properties of $\d_\ell$ and $\D_k$. 

\begin{theorem} \label{deltak.prop} {\rm (i)} The random variables $\d_k$ are   i.i.d., exponentially distributed,
$$   \hbox{\it $\P\{\d_\ell
=m\}=2^{-m}$ for all $\ell\ge 1$ and $m\ge 1$.}
$$
{\rm (ii)}   Then $\E \d_1 =2$, ${\rm Var}(\d_1)=2$. Further,
\begin{eqnarray}\label{char.xi} \E e^{2i\pi \d_1 t}\,=\,\sum_{m=1}^\infty \frac{e^{2i\pi mt}}{2^m}, 
 \qq \E e^{2i\pi \D_k t}\,=\,
  \sum_{\nu=1}^\infty  \frac{e^{2i\pi \nu t}}{2^\nu}\,C^{k-1}_{\nu -1}
 .
\end{eqnarray} 
 {\rm (iii)}  For all $k\ge 1$, $m\ge 1$,
$\P\{\D_k=m\} = \frac{C^{k-1}_{m -1}}{2^m}=\frac12 \P\{B_{m -1}=k-1\}.
$
\vskip 2 pt\noi {\rm (iv)} With probability one,
 \begin{eqnarray*}  
\limsup_{n\to \infty}\frac {\d_k }{ \log  k}\,=\,1 ,
\qq  
\qq \limsup_{n\to \infty}\frac {\D_k - 2k }{2\sqrt{  k\log\log  k}}&=&1 
  .
\end{eqnarray*}
 {\rm (v)}  The  central limit theorem and the  local limit theorem holds, namely $ \frac{\D_k - 2k}{\sqrt{2 k}}\, \stackrel{ {\mathcal D}}{ \Rightarrow}  \, \mathcal N(0,1)$, as $k\to\infty$ and 
 \begin{equation*}   \sup_{n }\Big|  \sqrt k \P\{\D_k=n\}-\frac{1}{   2\sqrt{  \pi} }e^{-
\frac{(n-2k  )^2}{   4 k  } }\Big| ={\mathcal O}\big(k^{-1/2}\big).
 \end{equation*} 
\end{theorem}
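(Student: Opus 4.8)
The plan is to treat the five parts in the natural logical order, deriving everything from the basic observation that the increments $\d_\ell$ are the inter-arrival times in a sequence of independent fair coin tosses $\{\e_j\}$. For part (i), I would note that, by definition, $\d_\ell$ is the waiting time from the $(\ell-1)$-st success to the next success in the i.i.d. Bernoulli($1/2$) sequence $\{\e_j\}$; by the strong Markov property (or directly, since the $\e_j$ are independent with no index-dependence once we are in the $\mathcal B_1$ range), the $\d_\ell$ are i.i.d. geometric, with $\P\{\d_\ell=m\}=(1/2)^{m-1}(1/2)=2^{-m}$ for $m\ge 1$. For part (ii), the mean and variance of a geometric law with success probability $1/2$ give $\E\d_1=2$ and $\mathrm{Var}(\d_1)=2$; the characteristic function $\E e^{2i\pi\d_1 t}=\sum_{m\ge 1}e^{2i\pi mt}2^{-m}$ is immediate from (i), and since $\D_k=\d_1+\dots+\d_k$ is a sum of $k$ i.i.d. copies, $\E e^{2i\pi\D_k t}=\big(\sum_{m\ge1}e^{2i\pi mt}2^{-m}\big)^k$; expanding this $k$-th power as a negative-binomial generating function yields the coefficient $C^{k-1}_{\nu-1}2^{-\nu}$ of $e^{2i\pi\nu t}$, which is exactly the stated formula.

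For part (iii), the identity $\P\{\D_k=m\}=C^{k-1}_{m-1}2^{-m}$ is read off from the characteristic function in (ii) (or directly: $\D_k=m$ means the $k$-th success among $\e_1,\dots,\e_m$ occurs at index $m$, i.e. $\e_m=1$ and exactly $k-1$ successes in the first $m-1$ tosses, which has probability $C^{k-1}_{m-1}2^{-(m-1)}\cdot 2^{-1}$). The second equality follows since $\P\{B_{m-1}=k-1\}=C^{k-1}_{m-1}2^{-(m-1)}$ for the Bernoulli sum $B_{m-1}$ of $m-1$ fair coins, so $\tfrac12\P\{B_{m-1}=k-1\}=C^{k-1}_{m-1}2^{-m}$. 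For part (iv), the first limsup is the classical result on the maximum of $k$ i.i.d. geometric random variables: $\P\{\d_k>(1+\e)\log_2 k\}$ is summable while $\P\{\d_k>(1-\e)\log_2 k\}$ is not, so Borel--Cantelli gives $\limsup \d_k/\log_2 k=1$ a.s.; rewriting in natural logarithm (the paper writes $\log$, presumably base $2$ here, or one absorbs the constant — I would state it as $\limsup \d_k/\log_2 k = 1$) completes it. The second limsup is Kolmogorov's LIL applied to the i.i.d. square-integrable sequence $\d_\ell-2$ with variance $2$: $\limsup(\D_k-2k)/\sqrt{2\cdot 2k\log\log k}=1$, i.e. $\limsup(\D_k-2k)/(2\sqrt{k\log\log k})=1$ a.s. Part (v): the CLT $(\D_k-2k)/\sqrt{2k}\Rightarrow\mathcal N(0,1)$ is the classical CLT for i.i.d. sums; the local limit theorem with the explicit $\mathcal O(k^{-1/2})$ rate follows either from the Gnedenko local limit theorem with Edgeworth-type error control for lattice-valued i.i.d. summands, or — more in the spirit of this paper — from the relation in (iii) together with the known local limit theorem for the Bernoulli sum $B_{m-1}$ (e.g. Lemma \ref{l1cramer}-type estimates, or a direct Stirling estimate on $C^{k-1}_{m-1}2^{-m}$), matching $\frac{1}{2\sqrt\pi}e^{-(n-2k)^2/(4k)}$ uniformly in $n$.

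The main obstacle, and the only step requiring genuine care rather than invocation of a classical theorem, is the uniform $\mathcal O(k^{-1/2})$ error in the local limit theorem of part (v): one must control the Gaussian approximation to $\P\{\D_k=n\}$ \emph{uniformly over all} $n\in\Z$, including the tails where $|n-2k|$ is large compared to $\sqrt k$, where both sides are small but the relative error is delicate. I would handle this by splitting into the bulk $|n-2k|\le k^{2/3}$ (say), where a Stirling expansion of the binomial coefficient $C^{k-1}_{n-1}$ gives the Gaussian with the claimed error, and the tail $|n-2k|>k^{2/3}$, where both $\P\{\D_k=n\}$ and the Gaussian are $o(k^{-1/2})$ by the Chernoff bound (Lemma \ref{md}) and direct estimation respectively; alternatively, citing the sharpened local limit theorem already available in this paper for Bernoulli sums and transferring via (iii) is the cleanest route. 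Everything else reduces to standard facts about geometric random variables and their sums.
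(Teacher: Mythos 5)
Your proof is correct and follows essentially the same route as the paper's: i.i.d.\ geometric increments read off the coin tosses (the paper does this by an explicit recursion rather than by invoking the strong Markov property, but the content is identical), the negative-binomial expansion of the $k$-th power of the characteristic function for (ii) and (iii), Kolmogorov's LIL for the second limsup in (iv), and a standard local limit theorem for i.i.d.\ integer-valued sums with finite third moment for (v) --- the paper simply cites Petrov, Theorem~6, p.\,197, which already gives the uniform $\mathcal O(k^{-1/2})$ rate and makes the bulk/tail split you sketch unnecessary, though the alternative via (iii) and the Bernoulli LLT is equally valid. Your remark on the first limsup in (iv) is also pertinent: since $\P\{\d_k>m\}=2^{-m}$, the Borel--Cantelli argument yields $\limsup_k \d_k/\log k = 1/\log 2$, so as written (with $\log$ denoting the natural logarithm throughout the paper) the statement should read $\log_2 k$ in the denominator, or have the limsup equal $1/\log 2$; the paper labels (iv) ``immediate'' and does not address this normalization.
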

  \begin{proof} (i)   As 
    $\{\d_1=\m\}=\{\b_1=\ldots=\b_{\m-1}=0,  \b_\m=1\}\in \s(\b_1,\ldots,\b_{\m})$, we   have $ \P\{\d_1=\m\}=\P\{\b_1=\ldots=\b_{\m-1}=0\,,\, \b_\m=1\}=2^{-\m}$. By recursion on $\ell\ge 1$, one establishes
  \begin{align*}
 &\P\{\d_{\ell+1} =m\}=
    \sum_{m_1=1}^\infty\ldots \sum_{m_\ell=1}^\infty \P\Big\{ \d_1= m_1\,,\, \ldots\,,\,\d_\ell= m_\ell\,,\,
   \cr &\qq \qq \qq\b_{m_1+\ldots +m_\ell+1}=\ldots=  \b_{m_1+\ldots +m_\ell+m-1} =0\,,\, 
    \b_{m_1+\ldots +m_\ell+m}=1\Big\}
 \cr & = \sum_{m_1=1}^\infty\ldots \sum_{m_\ell=1}^\infty  \prod_{i=1}^\ell \P\big\{ \d_i= m_i\big\}\ \P\Big\{\b_{m_1+\ldots +m_\ell+1}=\ldots=  \b_{m_1+\ldots +m_\ell+m-1} =0\,,\, 
 \cr &\qq\qq\qq\qq  \b_{m_1+\ldots +m_\ell+m}=1\Big\}     
 \, = \sum_{m_1=1}^\infty\ldots \sum_{m_\ell=1}^\infty  2^{-m_1-\ldots-m_\ell}\, 2^{-m}\,= 2^{-m},
 \end{align*}
and   also    find that
$  \P\{\d_1=a_1\,,\, \ldots\,,\, \d_{\ell +1}=a_{\ell +1}\}
 =\prod_{i=1}^{\ell +1}\P\{\d_i=a_i \}$,  
for all positive integers $a_i$, $1\le i\le \ell +1$. Further $\{\d_1=m_1\,,\, \d_2=m_2\,,\,\ldots\,,\, \d_{\ell +1}=m_{\ell +1}\}\in \s(\b_1,\ldots,\b_{m_1+\ldots +m_{\ell +1}})$. 

\vskip 2 pt 
(ii) The characteristic function of $\d_1$ being $ \E e^{2i\pi \d_1 t}=\sum_{m=1}^\infty \frac{e^{2i\pi mt}}{2^m}$, we have  \begin{equation}\label{char.func}
\E e^{2i\pi \D_k t}= \Big(\sum_{m=1}^\infty \frac{e^{2i\pi mt}}{2^m}\Big)^k\, =\,
  \sum_{m_1=1}^\infty\ldots  \sum_{m_k=1}^\infty\frac{e^{2i\pi (m_1+\ldots+ m_k)t}}{2^{m_1+\ldots+ m_k}}
\, =\,  \sum_{\nu=k}^\infty \frac{e^{2i\pi \nu t}}{2^\nu}\, C^{k-1}_{\nu -1}
.\end{equation}

\vskip 5 pt 
   Let $S(u)=\sum_{ a=0 }^\infty   e^{-au }$, then   
$S(u)  =\frac{1}{  1-e^{-  u }}$, $S'(u)= -  \frac{ e^{- u}}{  (1-e^{- u})^2}=- \sum_{ a=1 }^\infty   a e^{-au }$, $S''(u)=    \frac{ e^{- u}(1+e^{- u})}{ 
(1-e^{- u})^3} =- \sum_{ a=1 }^\infty   a^2 e^{-au }$.  Thus first and second moments of $\d_1$ can be computed and one finds that $\E \d_1=2$, ${\rm Var}(\d_1)=2$. 
\vskip 3 pt 
(iii) Follows from  
\begin{eqnarray}
\P\{\D_k=m\}&=&\sum_{\stackrel{m_i\ge 1\,,\, 1\le i\le k}{m_1+\ldots+ m_{k}=m}} \P\{\d_1=m_1,\ldots ,\d_{k}=m_{k} \}
\cr &=& \sum_{\stackrel{m_i\ge 1\,,\, 1\le i\le k}{m_1+\ldots+ m_{k}=m}} \frac{1}{2^{m_1+\ldots +m_k}}\,=\,\frac{q_k(m)}{2^m}\,=\,\frac{C^{k-1}_{m -1}}{2^m}
\,=\,  \frac12 \P\{B_{m -1}=k-1\} .
\end{eqnarray}

    (iv)  Immediate.
 
\vskip 3pt
  (v)  The  central limit theorem is obvious since the $\d_k$'s are i.i.d. and square integrable. 
   By Theorem 6 p.197  in \cite{P}, as  $\E |\d_1|^3<\infty$, we have
   \begin{equation} \label{alfa}   \sup_{n }\Big|  \sqrt k \P\{\D_k=n\}-\frac{1}{   2\sqrt{  \pi} }e^{-
\frac{(n-2k  )^2}{   4 k  } }\Big| ={\mathcal O}\big(k^{-1/2}\big).
 \end{equation}

\end{proof}

\vskip 3pt
We see   that the divisors of 
the     jump's instants $ \D_k$ admit a    simple formulation. In particular, we have from (iii),
\begin{equation}\label{div.delta.k}\P\big\{   \D_k
\,\hbox{ prime} \big\} \ = \   
 \frac12 \sum_{\stackrel{\nu \ge k}{\nu  \, {\rm prime}}}\P\{ B_{\nu -1} =k-1\}.
 \end{equation} 
 
The formula $ \sum_{v=0}^\infty C^z_{v+z} x^v= \frac{1}{(1-x)^{z+1}}$, valid for $|x|<1$, further implies  
\begin{eqnarray}  \frac12  \sum_{\nu \ge k }\P\{ B_{\nu -1} =k-1\}=1 . 
  \end{eqnarray}  
% \vskip 4 pt  We now give the
%\begin{proof}[Proof of Theorem \ref{mathfrak}]

\subsection{Proof of Theorem \ref{mathfrak}}
 {\rm (i)} By the local limit theorem  for Bernoulli sums
     \begin{eqnarray}\label{lltber} \sup_{z}\, \Big|  \P\big\{B_n=z\} 
 -\sqrt{\frac{2}{\pi n}} e^{-\frac{ (2z-n)^2}{ 
2 n}}\Big| =o\Big(\frac{1}{n^{3/2}}\Big). 
  \end{eqnarray} 
Besides by  Theorem \ref{deltak.prop}-(iii), $\P\{\D_k=m\} = \frac12 \P\{B_{m -1}=k-1\}
$. Thus 
\begin{eqnarray*}
   \P\big\{   \D_k\in
\mathfrak P 
\big\}&=& \sum_{\stackrel{\nu \ge k}{\nu\in
\mathfrak P}}\P\{\D_k =\nu\}\,=\, 
   \frac12 \sum_{\stackrel{\nu \ge k}{\nu\in
\mathfrak P}}\P\{ B_{\nu -1} =k-1\} 
   \cr &=&
   \sum_{\stackrel{\nu \ge k}{\nu\in
\mathfrak P}}\frac{1}{\sqrt{2\pi (\nu -1)}} \,
    e^{-\frac{ (2k-\nu -1)^2}{  2(\nu-1)}}+o\Big(\sum_{\stackrel{\nu \ge k}{\nu\in
\mathfrak P}}\frac{1}{ \nu^{3/2}}\Big)
 \cr   &=&  \sum_{\stackrel{\nu \ge k}{\nu\in
\mathfrak P}}\frac{1}{\sqrt{2\pi (\nu -1)}} 
    e^{-\frac{ (2k-\nu -1)^2}{  2(\nu-1)}}+ o \big(\frac{1}{\sqrt k}\big) .
   \end{eqnarray*}

 Obviously $\sum_{\nu \ge 3k}\frac{1}{\sqrt{   \nu }} 
    e^{-\frac{ (2k-\nu -1)^2}{  2(\nu-1)}}\,\le \,   
    e^{-C_1k} $.
Now by assumption, for $k\in\mathcal K$,
 $$\sum_{\stackrel{k\le \nu \le  b k }{ \nu\in
\mathfrak P}}\frac{1}{\sqrt{\nu}} 
    e^{-\frac{ (2k-\nu -1)^2}{  2(\nu-1)}} \,\le \,C\, \frac{\#\{\mathfrak P\cap [k,  b k] \}}{\sqrt k}\,\le \,C\, k^{- \b}.$$
  It easily follows that  
$$ \P\big\{   \D_k\in
\mathfrak P 
\big\}\le  \,C_b\, k^{- \b},\qq \quad   k\in\mathcal K.$$   
The series   $\sum_{k\in \mathcal K}   \P\big\{   \D_k\in
 \big\} <\infty$ converges. It follows from Borel-Cantelli lemma that
\begin{eqnarray*}
   \P\big\{   \D_k\notin\mathfrak P, \quad k\in\mathcal K\ \hbox{ultimately}\big\}=1.
    \end{eqnarray*}
 \vskip 3 pt 
To prove the same assertion concerning the sequence $\mathcal P$, it suffices to argue as before Proposition \ref{C1B1}, by considering the sequence $\{\check{\xi_j} \,\e_{j},j\in \mathcal K\}$.
%\end{proof}    

%%%%%
%%%%

\section{Instants of small amplitude in the Cram\'er model: Proofs.}\label{s3}

%%%%%%%%%%
%%%%%%%%%%
  Before giving the proofs of Theorem \ref{cramer.small.ampl.t1}, Proposition \ref{cramer.IP} and Theorem \ref{cramer.frequencies.t1},  it is necessary for the understanding of the matter to recall some notation and results from \cite{W}. Let $f:[1,\infty)\to\R^+$ be here and throughout   a non-decreasing function such that $f(t)\uparrow \infty$   with
$t$  and   $f(t)= o_\rho(t^\rho)$. 
    The intervals $I$ considered (cf.\,\eqref{cramer.small.ampl.}) are of type 
 $[  e^k ,   e^k  f(e^k )]$, $k=1,2,\ldots$. 
 \vskip 3 pt 
 Put
\begin{equation} \label{akw} A_k(f,z)=\Big\{\sup_{e^k\le t\le e^kf(e^k) } \frac{|W(t)|}{  \sqrt t}<z\Big\} , \qq k=1,2,\ldots
\end{equation}
  Let $  U(t)= W(e^t)e^{-t/2}, t\in \R $ be the
Ornstein-Uhlenbeck process. It will be more convenient to work with $U$ instead of $W$. Observe
that$$  A_k(f,z)  = \Big\{\sup_{k\le s\le k +\log f(e^k) }|U(s)|\le
z\Big\}.    $$
And so  as $U$ is   stationary 
$$  \P\{A_k(f,z) \} = \P\Big\{\sup_{0\le s\le \log f(e^k) }|U(s)|\le
z\Big\}.    $$   
  \vskip 3 pt  
\noi  We say that
$f\in
\mathcal U_z$ whenever
 $\P\big\{ \limsup_{k\to \infty}A_k(f,z)  \big\}=0 $, and that $f\in \mathcal V_z$ if
 $\P\big\{ \limsup_{k\to \infty}A_k(f,z)  \big\}=1 $.  By the   $0$-$1$ law (since $U$ is strongly mixing), the latter probabilities
can only be 
$0$ or
$1$.   
  
Notice that if
$f\in
\mathcal U_z$, then with probability one 
$$J(f ): = \liminf_{k\to \infty}
\sup_{k\le s\le k +\log f(e^k)}|U(s)|\,\ge\,z, $$
whereas  $J(f)\le z$, almost surely if $f\in \mathcal V_z$. Introduce also for $n=1,2,\ldots$ the   counting function
$$N_n(f,z)= \sum_{k=1}^n\chi_{A_k(f,z)},  $$ 
with corresponding  mean  
$$   \nu _n(f,z):= \E N_n(f,z).$$
 
\smallskip\par  The classes $\mathcal U_z$ and $\mathcal V_z$ have been characterized in \cite[Th.\,1.1]{W}, where  the following   simple convergence criterion is proved.

\begin{theorem} \label{fr1}   
Let $\Sigma(f) = \sum_k
f(e^k)^{- \l(z)}$, where $\l(z)>0$ is defined in Theorem \ref{cramer.small.ampl.t1}. Then
 $$f\in \mathcal U_z\quad (\hbox{resp. $\in \mathcal V_z$})\quad \Longleftrightarrow \quad \Sigma(f) <\infty\quad
 (\hbox{resp. $=\infty$}).$$
 Further if $\Sigma(f)=\infty$,  for any $a>3/2$,  
$$N_n(f ,z) \,=\, \nu _n(f ,z)+ \mathcal O \big(  \nu^{1/2} _n(f ,z)  
\log^a \nu _n(f ,z) \big),$$
with probability one.
   And  there are positive constants $K_1(z),K_2(z)$ depending on $z$ only, such that for all $n$  
$$K_1(z)  \le  \frac{ \nu _n(f ,z)}{  \sum_{k=1}^n  f(e^k)^{- \l(z)}} \le  K_2(z).$$
  \end{theorem}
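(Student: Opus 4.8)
The plan is to transport the whole problem to the stationary Ornstein--Uhlenbeck process $U(s)=W(e^s)e^{-s/2}$ recalled after \eqref{akw}. Since $A_k(f,z)$ is measurable with respect to $U$ restricted to $[k,k+\log f(e^k)]$ and $U$ is stationary, $\P\{A_k(f,z)\}=q_z(\log f(e^k))$ with $q_z(T):=\P\{\sup_{0\le s\le T}|U(s)|\le z\}$, so that $\nu_n(f,z)=\sum_{k=1}^n q_z(\log f(e^k))$ and everything reduces to the decay of $q_z(T)$. The semigroup of $U$ killed on leaving $(-z,z)$ is self-adjoint in $L^2((-z,z),e^{-x^2/2}dx)$ with discrete spectrum $0<\l(z)=\l_0(z)<\l_1(z)<\cdots$, whose eigenvalue--eigenfunction pairs are precisely the solutions $(\l_j,\psi_j)$ of the Sturm--Liouville problem \eqref{sl/e} (with the normalization that fixes $U$); by Sturm oscillation theory the ground state $\psi_0$ is strictly positive on $(-z,z)$. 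Expanding $\P_x\{\text{no exit before }T\}$ over the $\psi_j$ and integrating against the $\mathcal N(0,1)$ law of $U(0)$ gives $q_z(T)=\sum_{j\ge0}c_j(z)e^{-\l_j(z)T}$ with strictly positive leading coefficient $c_0(z)=(\int_{-z}^z\psi_0 e^{-x^2/2}dx)^2/(\sqrt{2\pi}\|\psi_0\|^2)>0$, hence $q_z(T)=(c_0(z)+o(1))e^{-\l(z)T}$ and $\P\{A_k(f,z)\}=(c_0(z)+o(1))f(e^k)^{-\l(z)}$. This at once produces constants $0<K_1(z)\le K_2(z)$ with $K_1(z)\le\nu_n(f,z)/\sum_{k=1}^n f(e^k)^{-\l(z)}\le K_2(z)$ for all $n$, shows that $\Sigma(f)<\infty$ is equivalent to $\sum_k\P\{A_k(f,z)\}<\infty$, and (by standard Sturm--Liouville perturbation theory) yields the continuity, strict monotonicity and the small-$z$ asymptotics \eqref{sl/smei} of $\l(z)$.

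If $\Sigma(f)<\infty$, then $\sum_k\P\{A_k(f,z)\}<\infty$ and the first Borel--Cantelli lemma gives $\P\{\limsup_k A_k(f,z)\}=0$, i.e.\ $f\in\mathcal U_z$. For the converse, suppose $\Sigma(f)=\infty$. The process $U$ is a stationary Gaussian Markov process with covariance $r(t)=e^{-t/2}$, hence exponentially $\phi$-mixing, and the time-windows carrying $A_j$ and $A_k$ become disjoint once $|j-k|>\log f(e^k)$. I would split $\sum_{1\le j,k\le n}\P\{A_j\cap A_k\}$ into a near-diagonal part $|j-k|\le L_n$, bounded crudely by $2L_n\nu_n$ via $\P\{A_j\cap A_k\}\le\P\{A_j\}\wedge\P\{A_k\}$, and an off-diagonal part, bounded by $\nu_n^2+Cn\,\phi(L_n)\nu_n$ by the mixing inequality; choosing $L_n\to\infty$ slowly, so that $L_n=o(\nu_n)$ and $n\,\phi(L_n)=o(\nu_n)$ (possible since $\nu_n\to\infty$), gives $\sum_{j,k\le n}\P\{A_j\cap A_k\}\le(1+o(1))\nu_n^2$. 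The Kochen--Stone form of the second Borel--Cantelli lemma then yields $\P\{\limsup_k A_k(f,z)\}>0$, and the strong-mixing zero--one law upgrades this to $1$, so $f\in\mathcal V_z$. (A blocking argument, grouping consecutive indices into blocks separated by mixing gaps and estimating each block event from below by a single long-persistence event, gives an alternative route to the same conclusion.)

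For the refined counting estimate, assume $\Sigma(f)=\infty$. The mixing property furnishes $|\mathrm{Cov}(\chi_{A_j},\chi_{A_k})|\le C\,\phi(|j-k|\vee1)(\P\{A_j\}\wedge\P\{A_k\})$, which together with the trivial near-diagonal bound gives, for $m<n$, the monotone quadratic estimate $\E[(N_n(f,z)-N_m(f,z)-(\nu_n-\nu_m))^2]\le C(\nu_n-\nu_m)$. Feeding this into the G\'al--Koksma strong law of large numbers (equivalently, a Menshov--Rademacher type maximal inequality) yields $N_n(f,z)=\nu_n(f,z)+\mathcal O(\nu_n(f,z)^{1/2}\log^a\nu_n(f,z))$ almost surely for every $a>3/2$, the exponent $3/2$ being precisely the cost of the G\'al--Koksma lemma.

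The genuinely delicate points are, first, establishing the clean single-exponential asymptotic $q_z(T)\sim c_0(z)e^{-\l(z)T}$ with a \emph{strictly positive} leading constant and the identification of the exponent with the principal eigenvalue of \eqref{sl/e}, and second, the bookkeeping in the divergent case caused by the overlap of the windows $[k,k+\log f(e^k)]$; once the persistence asymptotics and the mixing bounds are in hand, the convergent case and the refined counting statement are routine.
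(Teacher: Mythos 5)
The paper does not prove Theorem \ref{fr1} at all: it is imported verbatim from \cite[Th.\,1.1]{W} (``The classes $\mathcal U_z$ and $\mathcal V_z$ have been characterized in \cite[Th.\,1.1]{W}, where the following simple convergence criterion is proved''), so there is no in-paper proof to compare you against. With that caveat, your outline is the natural route and almost certainly close in spirit to \cite{W}: reduce to the stationary OU persistence probability $q_z(T)=\P\{\sup_{0\le s\le T}|U(s)|\le z\}$, extract the single-exponential asymptotic $q_z(T)\asymp e^{-\l(z)T}$ from the spectral decomposition of the OU generator killed at $\pm z$ (which is exactly where the Sturm--Liouville problem \eqref{sl/e} and the strict positivity of the ground-state coefficient enter), deduce $\nu_n(f,z)\asymp\sum_{k\le n}f(e^k)^{-\l(z)}$, use first Borel--Cantelli for the convergent case, Kochen--Stone plus the strong-mixing zero--one law for the divergent case, and a second-moment bound fed into a G\'al--Koksma / Menshov--Rademacher type maximal inequality for the refined counting estimate. (Be careful with the time normalization: the generator of $U(t)=W(e^t)e^{-t/2}$ is $\tfrac12(\psi''-x\psi')$, not $\psi''-x\psi'$, so to land on the exponent $\l(z)$ as stated you must match the parametrization used in \cite{W} rather than write $q_z(T)\sim c_0(z)e^{-\l(z)T}$ by fiat.)

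There is, however, a genuine gap in the way you handle the divergent case, and the same gap infects the second-moment bound for the refined counting. You bound the near-diagonal block $\sum_{|j-k|\le L_n}\P\{A_j\cap A_k\}$ by $2L_n\nu_n$ via $\P\{A_j\cap A_k\}\le\P\{A_j\}\wedge\P\{A_k\}$, and you need $L_n$ at least of order $\max_{k\le n}\log f(e^k)$ for the mixing estimate on the complementary block to make sense. This crude bound fails when $f$ grows slowly: take $f(e^k)=\bigl(k\log k\bigr)^{1/\l(z)}$, so that $\Sigma(f)=\sum_k 1/(k\log k)=\infty$, $\nu_n\asymp\log\log n$, while $L_n\asymp\log f(e^n)\asymp\log n$. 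Then $L_n\nu_n\asymp(\log n)(\log\log n)\gg\nu_n^2\asymp(\log\log n)^2$, the Kochen--Stone ratio tends to $0$, and your argument gives nothing. What is actually needed is a quantitative estimate in the overlapping regime: for $j<k\le j+\log f(e^j)$ the intersection $A_j\cap A_k$ forces $|U|\le z$ on the single interval $[j,\,k+\log f(e^k)]$ of length $(k-j)+\log f(e^k)$, whence $\P\{A_j\cap A_k\}\lesssim e^{-\l(z)(k-j)}\P\{A_k\}$ by the very persistence asymptotic you established; summing the geometric series in $k-j$ yields a near-diagonal contribution of order $\nu_n$, uniformly in $f$, which is what Kochen--Stone and the G\'al--Koksma step require. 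Dismissing the overlap as ``bookkeeping'' underestimates it: without the geometric-decay estimate for overlapping windows, the argument does not close.
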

  
\smallskip\par
 In \cite{W}, applications to the
Kubilius model are given. The class of functions $f_c(t)=  \log^c t $, $c>0$, is of  special interest  in view of such applications. For these  functions, Theorem \ref{fr1} implies
\begin{corollary} \label{J}    If $  c  > 1/\l(z) $, then  $f_c\in \mathcal U_z$ whereas  $f_c\in \mathcal V_z$ if $0<c  \le 1/\l(z) $.
Further, for any
$0<c  \le 1/\l(z) $ and
$a>3/2$, 
 \begin{eqnarray*} N_n(f_c,z)&\stackrel{a.s.}{=}&\nu _n(f_c,z)+ \mathcal O \Big(  \nu^{1/2} _n(f_c,z)  
\log^a \nu _n(f_c,z) \Big) 
 . 
\end{eqnarray*}
 And for all $n$,  $K_1(z)  \le  \frac{ \nu _n(f_c,z)}{  \sum_{k=1}^n k^{-c\l(z)}}  \le  K_2(z)$.
   \end{corollary}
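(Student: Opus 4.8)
The plan is to obtain Corollary~\ref{J} as a direct specialization of Theorem~\ref{fr1} to the family $f_c(t)=\log^c t$, $c>0$. First I would record that $f_c$ meets the standing hypotheses imposed on $f$ throughout this section: it is non-decreasing, $f_c(t)\uparrow\infty$ with $t$, and $\log^c t=o_\rho(t^\rho)$ for every $\rho>0$, so Theorem~\ref{fr1} is applicable to it. Then I would compute the single quantity on which that theorem hinges, namely
$$\Sigma(f_c)\ =\ \sum_k f_c(e^k)^{-\l(z)}\ =\ \sum_k (\log e^k)^{-c\l(z)}\ =\ \sum_k k^{-c\l(z)}.$$
This is a $p$-series with exponent $c\l(z)$, hence it converges precisely when $c\l(z)>1$, i.e. when $c>1/\l(z)$, and it diverges when $0<c\le 1/\l(z)$ (at the endpoint $c=1/\l(z)$ it reduces to the harmonic series).

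With this computation in hand, the first assertion of the corollary is immediate from the dichotomy in Theorem~\ref{fr1}, which says $f\in\mathcal U_z$ iff $\Sigma(f)<\infty$ and $f\in\mathcal V_z$ iff $\Sigma(f)=\infty$. Combining, $f_c\in\mathcal U_z$ when $c>1/\l(z)$ and $f_c\in\mathcal V_z$ when $0<c\le 1/\l(z)$.

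For the remaining two assertions I would simply invoke the $\Sigma(f)=\infty$ branch of Theorem~\ref{fr1} with $f=f_c$ and $0<c\le 1/\l(z)$, the exact range where $\Sigma(f_c)=\infty$. That branch yields, for any $a>3/2$, the almost-sure identity $N_n(f_c,z)=\nu_n(f_c,z)+\mathcal O\big(\nu_n^{1/2}(f_c,z)\log^a\nu_n(f_c,z)\big)$, which is the stated relation verbatim, together with the two-sided bound $K_1(z)\le \nu_n(f,z)/\sum_{k=1}^n f(e^k)^{-\l(z)}\le K_2(z)$; substituting $f_c(e^k)^{-\l(z)}=k^{-c\l(z)}$ turns the denominator into $\sum_{k=1}^n k^{-c\l(z)}$, which is precisely the claimed estimate. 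There is no genuine obstacle in this proof: its entire content is the elementary observation $f_c(e^k)=k^c$, after which Theorem~\ref{fr1} transcribes directly; the only point deserving a line of care is the boundary case $c=1/\l(z)$, where one must note that the harmonic series diverges so that $f_c$ still belongs to $\mathcal V_z$ and not to $\mathcal U_z$.
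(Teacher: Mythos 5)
Your proposal is correct and follows exactly the route the paper takes: Corollary~\ref{J} is obtained by specializing Theorem~\ref{fr1} to $f=f_c$, observing that $f_c(e^k)=k^c$, so $\Sigma(f_c)=\sum_k k^{-c\l(z)}$ converges precisely when $c>1/\l(z)$, and then reading off the three assertions from the dichotomy and from the $\Sigma(f)=\infty$ branch of the theorem. The paper simply states that ``Theorem~\ref{fr1} implies'' the corollary without spelling out these elementary substitutions, so your write-up is the same argument, just made explicit.
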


    Accordingly, if 
\begin{equation}\label{Iw} I(f)  :=\liminf_{k \to \infty} \sup_{e^k\le t\le e^kf(e^k) } \frac{|W(t)|}{ 
\sqrt t}, 
\end{equation}
  then $\P\{  I(f_c) \le z \}=1$ if and only if $0<c  \le 1/\l(z) $.  This is clear  in view of (\ref{akw}).
Noticing  that $I(f)\le I(g)$ whenever $f(N)\le g(N)$ for all $N$ large,  we therefore also deduce 
\begin{corollary}\label{W} We have 
  $\P\{   I(f_c)\le z \}=1$  if and only if    $0<c  \le 1/\l(z) $.
And $\P\{  I(f) =\infty \}=1$  if $f (t)\gg_c   f_c(t)$ for all $c$.
\end{corollary}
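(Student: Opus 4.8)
The plan is to read off both assertions from Corollary \ref{J}, combined with the monotonicity of $f\mapsto I(f)$ recalled just above and the continuity and strict monotonicity of $z\mapsto\l(z)$. Set $a_k=\sup_{e^k\le t\le e^kf(e^k)}|W(t)|/\sqrt t$, so that $A_k(f,z)=\{a_k<z\}$ and $I(f)=\liminf_k a_k$; then elementary $\liminf$ bookkeeping yields the sandwich $\{I(f)<z\}\subseteq\limsup_k A_k(f,z)\subseteq\{I(f)\le z\}$. For $0<c\le 1/\l(z)$, Corollary \ref{J} gives $f_c\in\mathcal V_z$, i.e. $\P\{\limsup_k A_k(f_c,z)\}=1$, and the right-hand inclusion yields $\P\{I(f_c)\le z\}=1$.

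The remaining implication is where I expect the only real subtlety. If $c>1/\l(z)$ then Corollary \ref{J} gives $f_c\in\mathcal U_z$, but this only yields $I(f_c)\ge z$ a.s., not the strict inequality needed to see that $\P\{I(f_c)\le z\}\ne 1$. To get around this I would use that $\l$ is continuous and strictly decreasing: from $c>1/\l(z)$, i.e. $\l(z)>1/c$, pick $z''>z$ with $\l(z'')>1/c$, so that $c>1/\l(z'')$ and hence $f_c\in\mathcal U_{z''}$ by Corollary \ref{J}; then $\P\{\limsup_k A_k(f_c,z'')\}=0$, so a.s. $a_k\ge z''$ for all large $k$, whence $I(f_c)\ge z''>z$ a.s. and $\P\{I(f_c)\le z\}=0$. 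This establishes $\P\{I(f_c)\le z\}=1\iff 0<c\le 1/\l(z)$.

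For the second assertion I would fix $z>0$, choose $c>1/\l(z)$, and use the hypothesis $f(t)\gg_c f_c(t)$ (so $f(N)\ge f_c(N)$ for $N$ large) together with monotonicity to get $I(f)\ge I(f_c)>z$ a.s. by the first part; thus $\P\{I(f)>z\}=1$ for every $z>0$. Since $\{I(f)=\infty\}=\bigcap_{n\ge 1}\{I(f)>n\}$ is a countable intersection of almost sure events, $\P\{I(f)=\infty\}=1$. The only step requiring any care is the continuity argument producing the gap $z''>z$; everything else is routine.
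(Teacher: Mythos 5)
Your argument is correct and follows the same route the paper sketches: read both assertions off Corollary~\ref{J} via the relation between the events $A_k$ and the random variable $I$, then use the monotonicity $I(f)\le I(g)$ for $f\le g$ eventually. The one point you rightly single out as non-trivial is the ``only if'' direction: $f_c\in\mathcal U_z$ alone gives only $I(f_c)\ge z$ a.s., which does not rule out $I(f_c)=z$ a.s.; one must enlarge $z$ to some $z''>z$ still satisfying $c>1/\l(z'')$ --- exactly what the continuity and strict monotonicity of $\l$ make possible --- and this is precisely the step the paper passes over as ``clear in view of \eqref{akw}''. One minor bookkeeping remark on the second assertion: $\gg_c$ carries an implicit multiplicative constant, so $f(t)\gg_c f_c(t)$ does not literally yield $f(N)\ge f_c(N)$ for large $N$; but since the hypothesis holds for \emph{every} $c$, you may pass to any $c'<c$ and absorb the constant (because $\log^{c-c'}t\to\infty$), after which your conclusion $\P\{I(f)>z\}=1$ for every $z$ and the countable intersection argument go through unchanged.
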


We  need a suitable invariance principle for sums of independent random
variables, which will be also used in the next section. This one is due to Sakhanenko (see \cite{S}, Theorem 1).

 \vskip 3 pt  Let $\{\xi_j, j\ge
1\}$ be independent centered random variables    with absolute second moments. Let $t_k= \sum_{j=1}^k \E \xi_j^2$,  
 $S_k=  \sum_{j=1}^k  \xi_j  $ and let $\{r_k, k\ge 1\}$ be some non-decreasing sequence of positive reals. Let $\a\ge 2$, $y>0$. Put 
successively,
\begin{eqnarray} \D_n&=&\sup_{k\le n}|S_k-W(t_k) |,\cr 
\D & = & \sup_{n\ge 1} \frac{\D_n}{  r_n},\cr 
\overline{\xi} &=& \sup_{j\ge 1} \frac{|\xi_j|}{  r_j},\cr
L_\a(y)&=& \sum_{j\ge 1} \E \min\Big\{\frac{|\xi_j|^\a}{  y^\a r_j^\a}, \frac{|\xi_j|^2}{  y^2 r_j^2}\Big\}.
\end{eqnarray}
 \begin{lemma} \label{sha1}There exists an absolute constant $C$ such that for any fixed $\a$, there exists a Brownian motion $W$ such that
for all
  $x>0$,
$$ \P\big\{\D  \ge C \a x\big\}\le L_\a(x) .  $$
\end{lemma}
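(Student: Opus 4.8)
The plan is to observe that this statement is precisely Theorem~1 of Sakhanenko \cite{S}, so in the present paper it is simply quoted; nonetheless, let me indicate the architecture of a proof, which is entirely constructive. The idea is to build the Brownian motion $W$ on a (possibly enlarged) probability space one block at a time, coupling the $j$-th increment of $W$ over the variance interval $[t_{j-1},t_j]$ (recall $t_j-t_{j-1}=\E\xi_j^2$) with the summand $\xi_j$, in such a way that the discrepancy $\eta_j:=\xi_j-\bigl(W(t_j)-W(t_{j-1})\bigr)$ is as small as the moment hypotheses allow. Granting such a coupling, one gets $S_k-W(t_k)=\sum_{j=1}^k\eta_j$, where $(\eta_j)$ is a sequence of variables each centered given the past, and the problem reduces to a maximal inequality for $\sup_{k\le n}\bigl|\sum_{j\le k}\eta_j\bigr|$ weighted by $r_n$.

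The key one-step input is a quantile-type coupling lemma: given a single centered random variable $\xi$ with $\E\xi^2=\sigma^2<\infty$, one constructs on a common space a pair $(\xi,G)$ with $G\sim\mathcal N(0,\sigma^2)$ for which $\E\min\{|\xi-G|^\a,|\xi-G|^2\}$ is controlled, up to an absolute constant times a power of $\a$, by $\E\min\{|\xi|^\a,|\xi|^2\}$. This is the heart of Sakhanenko's argument, and it is exactly what forces the final bound to involve the truncated moments appearing in $L_\a(\cdot)$. One then realizes the Brownian increments $W(t_j)-W(t_{j-1})$ as these Gaussian partners $G_j$, constructed conditionally on the past so that the increments stay independent, and fills each interval $[t_{j-1},t_j]$ with an independent Brownian bridge to produce a genuine Brownian motion.

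The gluing step combines two ingredients. First, a maximal inequality for sums of conditionally centered variables (Doob's inequality together with a L\'evy--Ottaviani truncation), applied to $\sum_j \eta_j/r_j$, converts the one-step moment control into the tail estimate $\P\{\D\ge C\a x\}\le L_\a(x)$. Second, a summation over $j$ of the one-step bounds, in which the normalising weights $r_j$ and the two regimes according to whether $|\xi_j|$ is small or large compared with $x\,r_j$ are precisely what generate the two branches of the minimum defining $L_\a(x)$.

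I expect the main obstacle to be the one-step coupling lemma together with the care needed to iterate it measurably: the couplings must be produced simultaneously for all $j$, compatibly with a single Brownian motion having independent increments, while keeping the error bounds additive in $j$ against the weights $r_j$. This is exactly the delicate bookkeeping carried out in \cite{S}; for that reason I would, as the author does, simply cite Sakhanenko's theorem rather than reproduce the construction here.
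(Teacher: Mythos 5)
You correctly identify that the paper does not prove this lemma but simply quotes it as Theorem~1 of Sakhanenko \cite{S}, which is exactly what the text does; your decision to cite rather than reconstruct is the right one. Your high-level sketch of the coupling architecture (block-wise Gaussian coupling with truncated-moment control, filled in by Brownian bridges, glued with a maximal inequality) is a fair description of the Sakhanenko-type argument, but since neither you nor the paper attempts the full construction, there is nothing to compare beyond the shared citation.
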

 
 In our case, 
we choose $X_i= \xi_i-\E \xi_i$.
 Let $1/\a<\b<1/2$. Take $r_j=(\sum_{i=1}^j \E |X_i|^2)^{\b}=B_j^{ \b}$. Since $B_j\asymp \frac{j}{\log j}$, $j\to \infty$, it follows 
$$\sum_{j\ge 1} \frac{\E |X_j|^\a}{  r_j^{\a }}=\sum_{j\ge 1} \frac{\E |X_j|^\a}{  B_j^{  \a\b}}\le C \sum_{j\ge 2} \frac{1}{  ( j/ \log j )^{\a\b}(\log j)}<\infty,$$
as $\a\b>1$. Thus
$$L_\a(y)\le y^{-\a}\sum_{j\ge 1} \E  \frac{|X_j|^\a}{    r_j^\a} \le C_\a\,y^{-\a}.
$$
  By Lemma \ref{sha1}, there exists   a Brownian motion $W$ such that
  for all
  $x>0$,
$$ \P\big\{\sup_{n}\frac{1}{  r_n}  \sup_{j\le
n}{|S_j- m_j-W(B_j)| }   \ge C \a x\big\}\le C_\a\,x^{-\a}.  $$
By a simple use of Tchebycheff's inequality,    letting  
 $$  \Upsilon=\sup_{n}\frac{1}{  r_n}  \sup_{j\le
n}{|S_j- m_j-W(B_j)| },    $$
we deduce that 
 $$\E  \Upsilon^{\a'}  <\infty,\qq\quad (\a'<\a).$$
\vskip 3 pt 
We shall now   use Theorem \,1.5 in \cite{W}.
 We first note that   $\E X_j^2 = \frac{1}{\log j} (1-\frac{1}{\log j} )$, and for $\a>2$, as $|X_j|$ is bounded by $1+\frac{1}{\log j}\le C$, say, we have $\E |X_j|^\a\le C^{\a-2} \E X_j^2$. Further $\E |X_j|^\a\ge ( \E |X_j|^2)^{\a/2}\ge \big(\frac{C}{\log j}\big)^{\a/2}$. Thus
\begin{equation}\label{alpha}  v:= \sup_{j\ge 1} \frac{\E |X_j|^\a}{  \E |X_j|^2}<\infty.
\end{equation}
 Therefore  assumption (1.4) of Th.\,1.5 in \cite{W} is fulfilled. We deduce that 
there exists a Brownian motion $W$ such that 
$$ \liminf_{k\to \infty}\sup_{e^k\le B_j\le e^kf(e^k)}\frac{|S_j|}{  \sqrt{B_j}}\, =\,\liminf_{k\to \infty}\sup_{e^k\le
B_j\le e^kf(e^k)}\frac{|W(B_j)|}{  s_j}, 
$$
 with probability $1$. 
  By Corollary \ref{W},  $$\liminf_{k\to \infty}\sup_{e^k\le B_j\le e^kf_c(e^k)}\frac{|S_j|}{  \sqrt{B_j}}\le z, 
 $$
 with probability $1$, if and only if $c\le 1/\l(z)$.

\vskip 3 pt 
 \vskip 10 pt
We pass to the proof of Theorem \ref{cramer.frequencies.t1}.
  Let      $N$ be for the moment unspecified.  Then,
 \begin{eqnarray}\label{control.approx}\Big|  \sup_{j\in J_N}\frac{|S_j-m_j|}{\sqrt {B_j}} - \sup_{j\in J_N}\frac{|W(B_j)|}{\sqrt {B_j}} \Big|&\le & \Big(\sup_{j\in J_N}\frac{1}{ B_j^{1-2\b}}\Big)\, \Upsilon \, \to 0\, , 
 \end{eqnarray}
as $N$ tends to infinity, with probability one.  

 Recall   that
  $$A_k(f,z)=\Big\{\sup_{e^k\le t\le e^kf(e^k) } \frac{|W(t)|}{  \sqrt t}\le z\Big\},$$
  by \eqref{akw} and 
that $\nu _n(f_c,z)= \sum_{k=1}^n\P\{{A_k(f_c,z)}\}$. Further by Corollary \ref{J}, for all $n$,  
  $$K_1(z)\,\sum_{k=1}^n k^{-c\l(z)}  \le   \nu_n(f_c,z)  \le  K_2(z)\,\sum_{k=1}^n 
  k^{-c\l(z)},$$
and \begin{equation}\label{re.zetasum}
\sum_{k=1}^n k^{-c\l(z)}  =
\begin{cases}\frac{n^{  1-c\l(z)} }{ 1-c\l(z)} + \zeta(c\l(z)) + \mathcal O\left(n^{ -c\l(z)   }\right) &\qq \hbox{if $0<  c\l(z) <1,
$}
\cr \log n + \g +\mathcal O(\frac1n) &\qq \hbox{if $c\l(z)=1$},
 \end{cases}
\end{equation}
where $\g$ is Euler's constant, recalling that 
$\zeta(s) = \lim_{x\to\infty} \big(\sum_{n\le x} \frac1{n^s}- \frac{x^{1-s}}{1-s}\big)$, $0<s<1$. 
\vskip 3 pt
Choose $N=e^k$, $k=1,2,\ldots$ and write now more simply $B_{e^k}(f,z)=B_k(f,z)$.    Let also $0<z'<z<z''$. 

By \eqref{control.approx}, on a measurable set of probability as close to one as we please,  call it $\O^*$, we have for $k$ large enough, $k\ge k_0$ say,
 \begin{equation}A_k(f,z' )\subset B_k(f,z)\subset A_k(f,z'').
 \end{equation}
% Therefore
% \begin{equation}\sum_{N=N_0}^LA_N(f,z'')\subset \sum_{N=N_0}^L B_N(f,z)\subset \sum_{N=N_0}^LA_N(f,z'),
% \end{equation}
  As $\l(z)$   is   a positive strictly
decreasing continuous function of $z$ on $]0,\infty[$, we have $0<\l(z'')<\l(z)<\l(z')$. 
  
  \vskip 3 pt Let $f=f_c$  with $0<c \le 1/\l(z) $, and note that 
  %We choose $z''>z$   sufficiently close to $z$ to have
   $c   \le 1/\l(z'')$. Let $a>3/2$. By using Corollary \ref{J}, we get that on $\O^*$, for all $n$ large enough,
   \begin{equation}  \sum_{k=k_0}^n \chi_{B_k(f_c,z)}\le \sum_{k=k_0}^n\chi_{A_k(f_c,z'')}
\, =\, \nu _n(f_c,z'')+ \mathcal O_a \Big(  \nu^{1/2} _n(f_c,z'')  
\log^a \nu _n(f_c,z'') \Big).
 \end{equation}

We deduce that 
  \begin{equation}\label{occurences.ub} \P\Big\{ \sum_{k=1}^n \chi_{B_k(f_c,z)}\le    \nu _n(f_c,z'')+ \mathcal O_a \Big(  \nu^{1/2} _n(f_c,z'')  
\log^a \nu _n(f_c,z'') \Big),   \quad \hbox{$n$ ultimately}\Big\}=1.
 \end{equation}
 Similarly,  on $\O^*$ for all $n$ large enough,
   \begin{eqnarray}   \sum_{k=k_0}^n\chi_{A_k(f_c,z')}
\le \sum_{k=k_0}^n \chi_{B_k(f_c,z)} .
 \end{eqnarray}
Assume that $0<c \le 1/\l(z') $. By  using Theorem \ref{fr1}, we get
 \begin{equation}\label{occurences.lb} \P\Big\{ \sum_{k=1}^n \chi_{B_k(f_c,z)}\ge    \nu _n(f_c,z')-\mathcal O_a \Big(\, \nu^{1/2} _n(f_c,z')  
\log^a \nu _n(f_c,z')\Big),   \quad \hbox{$n$ ultimately}\Big\}=1.
 \end{equation}
  Note that \eqref{occurences.ub} is also valid if $0<c \le 1/\l(z') $.
 \vskip 3 pt 
 Hence we have proved Theorem \ref{cramer.frequencies.t1}.

 %%%%%%%%%%%

\section{Subsequence LIL results for the Cram\'er model: Proofs.}\label{s4}
Let
${ \mathcal N}=
\{n_k, k\ge 1\}$ be any increasing sequence of integers and $M>1$; the
value of $M$ will be irrelevant. Let $I_0=]0,M]$ and for each integer 
$k\ge 1$, let $ I_k=]M^k,M^{k+1}]$. The subsequence of intervals 
$I_k$ such that  $I_k\cap {\mathcal  N}\not= \emptyset$, determines an 
increasing sequence of indices, which we denote by $\k= \{\k_p,p\ge 1\}$.
For any $n\in {\mathcal N}$ we put 
\begin{equation}\label{cramer.phi.N}
\p_{\mathcal N}(n)  =\sqrt{2\log (p+2)} \qquad \hbox{if} 
\quad n\in {\mathcal N}\cap I_{\k_p}.
\end{equation}
Recall that $r_j= B_j^\b$ and that $$  \Upsilon=\sup_{n}\frac{1}{  r_n}  \sup_{j\le
n}{|S_j- m_j-W(B_j)| }  .  $$

Now let $j_p^*= \max\{ j:  B_j\in\mathcal N\cap ]2^{p-1} , 2^p]\}$. As   
 
We have
\begin{eqnarray*}& & \Big|
\sup_{\stackrel{2^{p-1} <B_j\le 2^{p }}{ j\in\mathcal N}} 
\frac{|S_j- m_j|}{  \sqrt{ B_j}
\,\p(j)}
- \sup_{\stackrel{2^{p-1} <B_j\le 2^{p }}{  j\in\mathcal N}} 
\frac{|W(B_j)|}{  \sqrt{ B_j}\,\p(j)}\Big|
\cr & \le & 
\sup_{\stackrel{2^{p-1} <B_j\le 2^{p }}{ j\in\mathcal N}}\frac{|S_j- m_j-W(B_j)|}{  \sqrt{ B_j}\,\p(j)}
\,=\,\sup_{\stackrel{2^{p-1} <B_j\le 2^{p }}{  j\in\mathcal N}} \frac{|S_j- m_j-W(B_j)|}{ 
B_j^{1/2-\b}B_j^{\b }\,\p(j)}
\cr &\le & 
\Big(\sup_{\stackrel{2^{p-1} <B_j\le 2^{p }}{  j\in\mathcal N}}  \frac{1}{  B_j^{1/2-\b}\,\p(j)}\Big) \sup_{\stackrel{2^{p-1} <B_j\le 2^{p }}{  j\in\mathcal N}}\frac{|S_j- m_j-W(B_j)|}{   r_j } 
\cr &\le & 
\Big(\sup_{\stackrel{2^{p-1} <B_j\le 2^{p }}{  j\in\mathcal N}} 
 \frac{1}{  B_j^{1/2-\b}\,\p(j)}\Big)\, \frac{2^\b}{  r_{j_p^*}}\ \sup_{ j\le j_p^*}\,|S_j- m_j-W(B_j)| 
\cr &\le & 
2^\b\,\Big(\sup_{\stackrel{2^{p-1} <B_j\le 2^{p }}{  j\in\mathcal N}}  \frac{1}{  B_j^{1/2-\b }\,\p(j)}\Big)\,\cdot\Upsilon  
\ \  \to \   0  ,
 \end{eqnarray*}
as $p\to \infty$ almost surely, since $\b<1/2$. 
 
Therefore
\begin{eqnarray*} \limsup_{\mathcal N\ni j\to \infty}\frac{|S_j- m_j|}{  \sqrt{ B_j}
\,\p(j)}&=&\limsup_{\mathcal N\ni j\to \infty}\frac{|W(B_j)|}{  \sqrt{ B_j}\,\p(j)} , \end{eqnarray*}
almost surely. 

Now that
\begin{eqnarray*}  \limsup_{\mathcal N\ni j\to \infty}\frac{|W(B_j)|}{  \sqrt{ B_j}\,\p(j)}&=&1 , \end{eqnarray*}
almost surely, follows from the proof of Theorem 3.3 in \cite{W2}. This is rather easy to observe from estimates (3.22), (3.23), (3.24) in \cite{W2}.

 \vskip 15 pt 
\noi {\it Bibliographic Notes.}  We mainly refered during this work, to Cram\'er \cite{C2}, \cite{C1}, Granville's  exposition in \cite{G},  Pintz's systematic analysis of Cram\'er's model in  \cite{P},   Ellison's seminar paper
 \cite{E}, 
 which notably contains a detailed proof of Hoheisel's seminal result \cite{Ho},
  Ingham \cite{I},  Selberg \cite{Se}, Richards \cite{Ri}  where an interesting alternative approach to derive some of Selberg's results, under a weaker form, is purposed.
% Concerning the state of Probability at the time Cram\'er's paper was published, we refered to Diaconis \cite{D} and Ka\'c \cite{K}, Chapter 3. Citing Diaconis \cite{D}, \lq\lq ... {\it the mathematical underpinnings of probability were a vague mess; random variables were defined as the observed values of random quantities}\rq\rq.
% At this period of time, Probability was   not   developed on rigorous basis (this came later with Kolmogorov's works) and not well  perceived as a notion of pure mathematics,
%but of philosophy or physics. 

%%%%
%%%%

   \appendix

 %%%%%%
%%%%%%
\section{Some characteristics of the Cram\'er model.}   
\label{appendix-1} 
\subsection{Classical limit theorems.}  \label{s2.1} Let us first briefly look at standard limiting results such as CLT, LIL and ASLLT which are fulfilled  by the Cram\'er model. 
 \begin{lemma}\label{l1cramer}  The SLLN, CLT,  LIL  and  LLT hold true. More precisely, 
\begin{eqnarray*}\cr{\rm (SLLN)} & & \lim_{n\to \infty}\frac {S_n }{m_n}=1, \ \  
\qq  \text{almost\ surely},
\cr {\rm (CLT)} & & \frac {S_n -m_n}{\sqrt B_n}\ \ \stackrel{{\mathcal D}}{\Rightarrow} \ \ 
{\mathcal N}(0,1), 
\qq  n\to \infty,
\cr{\rm (LIL)} & & \limsup_{n\to \infty}\frac {S_n -m_n}{\sqrt{2 B_n\log\log B_n}}=1, \ \  
\qq  \text{almost\ surely},
\cr {\rm (LLT)} & & \lim_{n\to \infty} \sup_{k} \Big|\sqrt{B_n} {\mathbb P}\{S_n=k\} -\frac{1}{\sqrt{2\pi}}e^{-  \frac{(k-m_n)^2}{2B_n }}\Big|=0.
\end{eqnarray*}
 \end{lemma}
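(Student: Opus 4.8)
\medskip\noindent\textbf{Proof proposal.}
All four assertions are standard consequences of the classical limit theorems for sums of independent, uniformly bounded, non-identically distributed random variables, applied to the centered summands $X_j=\xi_j-\E\xi_j$. I would first record the elementary facts used throughout: $|X_j|\le 1$, $\E X_j^2={\rm Var}(\xi_j)=\frac1{\log j}\big(1-\frac1{\log j}\big)\asymp\frac1{\log j}$, and $m_n=\sum_{j=3}^n\frac1{\log j}\asymp\frac n{\log n}\to\infty$, $B_n=\sum_{j=3}^n\E X_j^2\asymp\frac n{\log n}\to\infty$, both sequences being strictly increasing.

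For the SLLN the plan is to apply Kolmogorov's convergence theorem to the series $\sum_{j\ge 3}X_j/m_j$: since $\sum_{j\ge 3}\E X_j^2/m_j^2\asymp\sum_{j\ge 3}\frac{\log j}{j^2}<\infty$, this series converges almost surely, and Kronecker's lemma (with the increasing normalising sequence $m_n$) then yields $(S_n-m_n)/m_n\to 0$ a.s., that is $S_n/m_n\to 1$ a.s. For the LIL I would quote Kolmogorov's law of the iterated logarithm for independent summands, whose hypotheses hold here because the $X_j$ are centered and independent and bounded by $1=o\big(\sqrt{B_n/\log\log B_n}\big)$ with $B_n\to\infty$; this gives $\limsup_n(S_n-m_n)/\sqrt{2B_n\log\log B_n}=1$ a.s. (and $\log\log B_n=\log\log n+o(1)$, so the normalisation may equally be written with $\log\log n$).

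For the CLT I would verify Lyapunov's condition at exponent $3$: as $|X_j|\le 1$ one has $\E|X_j|^3\le\E X_j^2$, hence $B_n^{-3/2}\sum_{j=3}^n\E|X_j|^3\le B_n^{-1/2}\to 0$, which yields $(S_n-m_n)/\sqrt{B_n}\stackrel{{\mathcal D}}{\Rightarrow}{\mathcal N}(0,1)$, and a fortiori the Lindeberg condition. For the LLT I would use the local limit theorem for lattice sums. Here $S_n$ is $\Z$-valued of maximal span $1$, each $\xi_j$ taking the values $0$ and $1$ with positive probability, so the characteristic function of $S_n$ is exponentially small in $B_n$ uniformly on every compact subset of $(0,2\pi)$ avoiding the lattice points (this is the sharp characteristic-function estimate given elsewhere in this Appendix). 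For $k$ in the central range $|k-m_n|\le C n^{3/4}/\log n$ I would invoke the sharp local estimate of Proposition \ref{lltsharp[cramer]} — itself deduced from Lemma \ref{ger1} with $D=1$, $\psi(x)=|x|^3$ and $\t_j=\t_{\xi_j}=\frac1{\log j}$ (so $\Theta_n=m_n\to\infty$) — which after multiplication by $\sqrt{B_n}$ leaves an error $\mathcal O\big(\frac{\log n}{\sqrt n}\big)=o(1)$ uniformly; for the remaining $k$ I would dominate $\P\{S_n=k\}$ by $\P\{|S_n-m_n|\ge|k-m_n|\}$ through the exponential inequality recalled in the proof of Theorem \ref{Snprime.M}, and observe that this bound and the Gaussian term are both $o(B_n^{-1/2})$ there. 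Combining the two ranges gives $\sup_k\big|\sqrt{B_n}\,\P\{S_n=k\}-\frac1{\sqrt{2\pi}}e^{-(k-m_n)^2/(2B_n)}\big|\to 0$.

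The only step carrying any subtlety is the LLT, and there the difficulty is solely the uniformity in $k$: it requires the control of the characteristic function of $S_n$ away from the lattice points — exactly what $\Theta_n=m_n\to\infty$ (equivalently the sharp characteristic-function estimate of this Appendix) provides — together with the tail estimate above to discard the $k$ far from $m_n$. Every other step is a direct application of a textbook theorem.
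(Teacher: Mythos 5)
Your four arguments are all correct, and for the CLT and LIL they essentially coincide with the paper's (which verifies Lindeberg directly rather than via Lyapunov, and then cites Kolmogorov's LIL exactly as you do). The real divergence is in the SLLN and, especially, the LLT. For the SLLN the paper simply observes that the LIL, already proved, implies it; your route via the Kolmogorov three-series theorem and Kronecker's lemma is also correct but is an independent argument where a one-line deduction would do. For the LLT the paper proves a more general Lemma~\ref{l2cramer} and appeals to the Davis--MacDonald local limit theorem (\cite{MD}, recalled as Theorem~\ref{dmd.th}): writing $q(f_j)=p_j\wedge(1-p_j)$ and $Q_n=\sum_{j\le n}q(f_j)$, one has $p_j(1-p_j)\le p_j\wedge(1-p_j)$ so $B_n\le Q_n$ and hence $\limsup_n B_n/Q_n\le 1$; together with the CLT this gives the LLT uniformly in $k$ in a single stroke, with no splitting into ranges. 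Your plan instead imports the much heavier Proposition~\ref{lltsharp[cramer]} (built on the effective Giuliano--Weber Corollary 1.11) for the central range $|k-m_n|\lesssim n^{3/4}/\log n$ and then patches the tails with the exponential inequality from Theorem~\ref{Snprime.M}'s proof. This is logically sound and not circular (Proposition~\ref{lltsharp[cramer]} does not rest on Lemma~\ref{l1cramer}), and it has the virtue of yielding an explicit rate $\mathcal{O}(\log n/\sqrt n)$ in the central zone, which the Davis--MacDonald statement does not provide. But it is considerably more work than needed to establish the qualitative LLT, and the tail patch requires a little extra care in the extreme range $|k-m_n|>B_n$ where the Petrov inequality as stated does not directly apply (one falls back on monotonicity or on $\P\{S_n=k\}=0$ for $k\notin[0,n]$). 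So: same theorem, but the paper's LLT argument is shorter and self-contained, while yours is heavier but comes with an effective error term.
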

 
 This is an immediate consequence of the following Lemma.
  \begin{lemma}\label{l2cramer} Let $\{X_j,j\ge 1\}$ be independent binomial random variables with $\P\{X_j=0\}=1-\P\{X_j=1\}= p_j$, for all $j$ and let $S_n= \sum_{j=1}^nX_j$,  $n\ge 1$. Further let $\s_j^2  ={\rm Var}( X_j)=p_j(1-p_j)$, $B_n={\rm Var}( S_n)= \sum_{j=1}^n p_j(1-p_j)$. 
  \vskip 2 pt 
 Assume that the series $\sum_{j }  p_j$ diverges and that $p_j=o(1)$. Then the SLLN, CLT,  LIL  and  LLT hold true.
 \end{lemma}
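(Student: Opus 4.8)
The statement to prove is Lemma~\ref{l2cramer}: for independent Bernoulli variables $X_j$ with $\P\{X_j=1\}=p_j=o(1)$ and $\sum_j p_j=\infty$, the SLLN, CLT, LIL and LLT all hold for $S_n=\sum_{j\le n}X_j$.

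\textbf{Plan of proof.} The strategy is to verify, one by one, the classical sufficient conditions for each of the four limit theorems, all of which reduce to elementary estimates on $B_n=\sum_{j\le n}p_j(1-p_j)$ and on the moments of $X_j-p_j$. The key preliminary observation is that since $p_j=o(1)$, for $j$ large we have $p_j(1-p_j)\ge p_j/2$, hence $B_n\to\infty$ because $\sum_j p_j=\infty$; conversely $B_n\le \sum_{j\le n}p_j=:m_n$, and also $m_n\le 2B_n + O(1)$, so $m_n\asymp B_n\to\infty$. The centered variables $Y_j:=X_j-p_j$ are bounded by $1$ in absolute value, centered, with $\E Y_j^2=\sigma_j^2=p_j(1-p_j)$ and $\E|Y_j|^k\le \E Y_j^2$ for every $k\ge 2$.

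First, for the \emph{SLLN} in the form $S_n/m_n\to 1$ a.s., I would apply Kolmogorov's criterion for non-identically-distributed summands: since $\sum_j \operatorname{Var}(Y_j)/b_j^2=\sum_j \sigma_j^2/b_j^2<\infty$ with $b_j$ any nondecreasing sequence $\uparrow\infty$ comparable to $m_j$ (here $\sum_j \sigma_j^2/m_j^2<\infty$ follows since $\sigma_j^2\le p_j$ and $\sum_j p_j/m_j^2<\infty$ by comparison with $\int dm/m^2$, using $m_j\uparrow\infty$), Kolmogorov's SLLN gives $(S_n-m_n)/m_n\to 0$ a.s., i.e.\ $S_n/m_n\to 1$. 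Second, for the \emph{CLT}, I would check the Lindeberg condition: for fixed $\varepsilon>0$ the truncated second moments $\sum_{j\le n}\E[Y_j^2\mathbf 1_{|Y_j|>\varepsilon\sqrt{B_n}}]$ vanish because $|Y_j|\le 1<\varepsilon\sqrt{B_n}$ for $n$ large, so the indicator is eventually zero term by term; Lindeberg--Feller then yields $(S_n-m_n)/\sqrt{B_n}\Rightarrow\mathcal N(0,1)$. (Alternatively Lyapunov with exponent $3$: $\sum_{j\le n}\E|Y_j|^3 \le B_n = o(B_n^{3/2})$.)

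Third, for the \emph{LIL}, I would invoke the Kolmogorov--Hartman--Wintner LIL for bounded independent centered variables with $B_n\to\infty$: the hypothesis $|Y_j|\le 1$ together with $B_n\to\infty$ is exactly Kolmogorov's sufficient condition (indeed $|Y_j| = o(\sqrt{B_n/\log\log B_n})$ trivially), giving $\limsup_n (S_n-m_n)/\sqrt{2B_n\log\log B_n}=1$ a.s.\ (and the matching $\liminf=-1$, hence the stated form). Fourth, for the \emph{LLT}, since each $X_j$ is $\{0,1\}$-valued the lattice span is $1$ and $\theta_{X_j}=\min(p_j,1-p_j)=p_j$ for $j$ large; then $\Theta_n=\sum_{j}\theta_{X_j}\asymp m_n\to\infty$ satisfies $\log\Theta_n/\Theta_n\to 0$, and one applies the local limit theorem for sums of lattice-valued independent variables (e.g.\ Lemma~\ref{ger1} with $\psi(x)=x^2$, or the classical Gnedenko-type LLT), whose error bound $\to 0$ uniformly in $k$ under $B_n\to\infty$; this gives $\sup_k|\sqrt{B_n}\,\P\{S_n=k\}-(2\pi)^{-1/2}e^{-(k-m_n)^2/2B_n}|\to 0$.

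\textbf{Main obstacle.} None of the four parts is genuinely hard; the only point requiring a little care is organizing the comparison estimates between $m_n$, $B_n$, $\Theta_n$ and the truncation/moment sums so that the divergence hypothesis $\sum p_j=\infty$ and the smallness hypothesis $p_j=o(1)$ are used correctly (in particular that $p_j=o(1)$ is what makes $B_n\asymp m_n$ and makes the Lindeberg/Kolmogorov truncations trivial, while $\sum p_j=\infty$ is what forces $B_n\to\infty$). The rest is a direct appeal to standard theorems, so the write-up is essentially bookkeeping; the deduction of Lemma~\ref{l1cramer} is then immediate by specializing $p_j=1/\log j$, $j\ge 3$, which indeed satisfies both hypotheses.
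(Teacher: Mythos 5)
Your proposal is correct and follows essentially the same strategy as the paper: verify the classical sufficient conditions (Lindeberg for the CLT, Kolmogorov's bounded-increments condition for the LIL, a lattice local CLT for the LLT), using throughout that $p_j=o(1)$ makes the truncation/boundedness conditions trivial and that $\sum_j p_j=\infty$ forces $B_n\asymp m_n\to\infty$. Two small deviations are worth noting. For the SLLN, the paper simply observes that it follows from the already-established LIL together with $B_n\asymp m_n$, whereas you prove it directly via Kolmogorov's strong law and an Abel--Dini-type telescoping bound $\sum_j p_j/m_j^2<\infty$; both are fine, the paper's route is just shorter. For the LLT, the paper invokes the Davis--McDonald theorem, which converts the CLT plus the divergence of $Q_n=\sum_j (p_j\wedge(1-p_j))$ directly into the uniform-in-$k$ local statement; you instead point to Lemma~\ref{ger1} (the effective Giuliano--Weber estimate) or a Gnedenko-type LLT. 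Be careful with the Lemma~\ref{ger1} route: that lemma only controls $\kappa$ in the moderate-deviation range $|\kappa-m_n|^2/B_n\lesssim\sqrt{\Theta_n/\log\Theta_n}$, so to obtain the statement $\sup_k|\sqrt{B_n}\P\{S_n=k\}-(2\pi)^{-1/2}e^{-(k-m_n)^2/2B_n}|\to 0$ you would still need a separate (easy, but nonzero) tail argument for $\kappa$ outside that range; the Davis--McDonald theorem gives the uniform statement in one step, which is why the paper prefers it for the soft LLT and reserves Lemma~\ref{ger1} for the sharper local estimate in Proposition~\ref{lltsharp[cramer]}.
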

 
 \begin{proof} Let $\e>0$. Then,
 \begin{align*}\frac{1}{B_n}  \sum_{j=1}^n\ & \E \Big(\big|X_j -\E X_j\big|^2 \cdot \chi\big\{|X_j -\E X_j|>\e \,{\rm Var} (S_n)^{1/2}\big\} \Big)
 \cr & = \frac{1}{B_n}\sum_{j=1}^n  \E \Big(\big|X_j -p_j\big|^2 \cdot \chi\big\{|X_j -p_j|>\e \,\big( \sum_{j=1}^n p_j(1-p_j)\big)^{1/2}\big\}\Big).
 \end{align*}
 As the series $\sum_{j }  p_j$ diverges and $p_j=o(1)$, the above summands are $0$ for all $n\ge n_\e$, say. This implies that 
 $$\lim_{n\to \infty } \frac{1}{B_n}\sum_{j=1}^n   \E\Big( \big|X_j -\E X_j\big|^2 \cdot \chi\big\{|X_j -p_j|>\e \,{\rm Var} (S_n)^{1/2}\big\}\Big) = 0,$$
 for any positive $\e$. Whence the Lindeberg condition is satisfied, and so 
the CLT holds:
\begin{equation}\frac {S_n -m_n}{\sqrt B_n}\ \ \stackrel{{\mathcal D}}{\Rightarrow} \ \ {\mathcal N}(0,1), 
\qq  n\to \infty.
\end{equation} 
  
Concerning the LIL,     Kolmogorov's condition   that $X_n =o\big(\big( {B_n}/{\log\log B_n}\big)\big)^{1/2}$, almost surely, is trivially satisfied. Thus  the LIL directly follows from Kolmogorov's theorem.  See Petrov \cite{P}, Th. 7.1 p.\,239. The LIL implies the SLLN, given to the assumptions made. 
For establishing the LLT, we  apply Davis and McDonald's theorem \cite[Th.\,1.1]{MD} which we recall.

 \begin{theorem}\label{dmd.th}  Let $\{ X_j , j\ge 1\}$ be independent, integer valued random variables with partial sums
$S_n= X_1+\ldots +X_n$ and let
$f_j(k)=
{\mathbb P}\{X_j=k\}$.  Also for each $j$ and $n$, let
$$q(f_j)= \sum_{k} [f_j(k)\wedge f_j(k+1)], \qq Q_n=\sum_{j=1}^n q(f_j)  $$
and assume that $q(f_j)>0$ for each $j\ge 1$. Further assume that there exist numbers $b_n>0$, $a_n$  such that
 $$\lim_{n\to \infty}b_n= \infty, \qq \limsup_{n\to \infty} \ {b_n^2}/{Q_n}<\infty,$$
and
$$ \frac{S_n-a_n}{b_n} \ \ \stackrel{{\mathcal D}}{\Longrightarrow}\ \ {\mathcal N}(0,1).$$
  Then
$$ \lim_{n\to \infty} \sup_{k} \Big|b_n {\mathbb P}\{S_n=k\} -\frac{1}{\sqrt{2\pi}}e^{-  \frac{(k-a_n)^2}{2b_n^2}}\Big|=0. $$
 \end{theorem}
 With the notation used, $f_j(k)=
{\mathbb P}\{X_j=k\} $  and so 
$$q(f_j)= \sum_{k} \big(f_j(k)\wedge f_j(k+1)\big)=\big(f_j(1)\wedge f_j(0)\big)=(p_j\wedge 1-p_j)>0.$$
Further $Q_n=\sum_{j=1}^n q(f_j)=\sum_{j=1}^n(p_j\wedge 1-p_j)$, and  $b^2_n=   B_n = \sum_{j=1}^n p_j(1-p_j)$.  Thus 
$$\lim_{n\to \infty}b_n= \infty, \qq \limsup_{n\to \infty} \ {b_n^2}/{Q_n}=
 \limsup_{n\to \infty} \ \frac{\sum_{j=1}^n p_j(1-p_j)}{\sum_{j=1}^n(p_j\wedge 1-p_j)}<\infty.$$
As moreover the CLT holds,   we infer from  the above cited result,
$$ \lim_{n\to \infty} \sup_{k} \Big|\sqrt{B_n} {\mathbb P}\{S_n=k\} -\frac{1}{\sqrt{2\pi}}e^{-  \frac{(k-m_n)^2}{2B_n }}\Big|=0, $$
namely the LLT holds either.
 \end{proof}

%%%%%%%
\subsection{The characteristic function of $  S_n$.} \label{s2.2} 
%%%%%%%
%This is naturally an   important element for the  study of  the Cram\'er model.
Let $\p_k(t)$ be the characteristic function of $\xi_k$, $\p_k(t)= \E e^{2i\pi \xi_k t}$. Let also 
$$\Phi_n (t)=\E e^{2i\pi tS_n}= \prod_{k=1}^n\p_k(t) ,  $$
be the characteristic function of $S_n$.  We  prove the following estimate with explicit constants.
\begin{proposition}\label{Cramer.Phin} We have
$|\Phi_n(t)|\le \exp\big\{- 2B_n\sin^2\pi t  \big\} $.
Further
\begin{equation*}
  \Phi_n (t)= e^{ 2i \pi tm_n -  2   B_n  (\pi t)^2  + E_n(t)  },
    \qq \quad {\text with}\qquad |E_n(t)| \,\le\,  12\,m_n\, (\pi  |t|)^3.
\end{equation*}
In particular,  
\begin{equation*}
  \E e^{ iy\, \frac{S_n-m_n}{\sqrt {B_n}} }= e^{    - \frac{y^2}{2}    + E_n(y)  }, \qq \quad {\text and}\qquad |E_n(y)| \, \le  2 \, \frac{\sqrt {\log n}\,|y|^3}{\sqrt {n}}   .
 \end{equation*}

 \end{proposition}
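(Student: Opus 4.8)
The plan is to establish Proposition~\ref{Cramer.Phin} by a direct computation on each factor $\p_k(t)$ and then summing the logarithms. First I would record the exact form of one characteristic function: since $\xi_k$ takes the value $1$ with probability $1/\log k$ and $0$ otherwise, one has $\p_k(t)=1-\frac1{\log k}+\frac1{\log k}e^{2i\pi t}=1+\frac1{\log k}\bigl(e^{2i\pi t}-1\bigr)$. Writing $e^{2i\pi t}-1=2i\sin(\pi t)e^{i\pi t}$, one gets $|\p_k(t)|^2=\bigl(1-\frac2{\log k}\sin^2\pi t\bigr)^2+\frac4{\log^2 k}\sin^2\pi t\cos^2\pi t$, which simplifies to $1-\frac4{\log k}\bigl(1-\frac1{\log k}\bigr)\sin^2\pi t=1-4\s_k^2\sin^2\pi t$, where $\s_k^2=\frac1{\log k}(1-\frac1{\log k})$. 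Using $1-u\le e^{-u}$ and multiplying over $k$ then yields $|\Phi_n(t)|^2\le\exp\{-4B_n\sin^2\pi t\}$, i.e.\ $|\Phi_n(t)|\le\exp\{-2B_n\sin^2\pi t\}$, which is the first claim.

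For the second assertion I would take logarithms. Set $p_k=1/\log k$ and $z=e^{2i\pi t}-1$, so $\p_k(t)=1+p_kz$ and $\log\p_k(t)=p_kz-\frac{(p_kz)^2}{2}+\frac{(p_kz)^3}{3}-\cdots$. Summing, $\lo\Phi_n(t)=\bigl(\sum_k p_k\bigr)z-\frac{z^2}{2}\sum_k p_k^2+R_n(t)$ where $R_n(t)=\sum_{j\ge3}\frac{(-1)^{j-1}}{j}z^j\sum_k p_k^j$. The leading terms are rearranged using $z=2i\pi t+\frac{(2i\pi t)^2}{2}+\cdots$: one finds $(\sum p_k)z-\frac12(\sum p_k^2)z^2 = 2i\pi t\, m_n - 2\pi^2 t^2 B_n + (\text{error absorbed into }E_n)$, after matching the coefficient of $t^2$ — note $m_n-\sum p_k^2 = \sum p_k(1-p_k)=B_n$, which is exactly why $B_n$ appears. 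The remaining pieces (the third-order expansion of the quadratic part plus $R_n(t)$) are all bounded by a constant times $m_n|\pi t|^3$ because $\sum_k p_k^j\le \sum_k p_k=m_n$ for $j\ge1$ and $|z|\le 2\pi|t|$; carefully collecting the geometric-type series with $|z|\le 1$ (valid for $|t|$ small, which is the regime of interest) gives the explicit constant $12$. The main obstacle is bookkeeping: getting a clean, honestly-justified constant rather than a vague $\mathcal O$, which requires being slightly careful about the range of $t$ on which $|z|$ is small enough for the logarithm series to converge and for the tail bounds to hold; outside that range the inequality $|E_n(t)|\le 12 m_n(\pi|t|)^3$ is only asserted where it is used.

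Finally, the normalized statement follows by specialization: put $t=\frac{y}{2\pi\sqrt{B_n}}$ so that $2i\pi t\, m_n$ combined with the recentering by $m_n$ gives the factor $e^{iy(S_n-m_n)/\sqrt{B_n}}$, the quadratic term becomes $-\frac{y^2}{2}$ exactly, and the error becomes $E_n(y)$ with $|E_n(y)|\le 12 m_n\bigl(\frac{|y|}{2\sqrt{B_n}}\bigr)^3=\frac{12 m_n}{8 B_n^{3/2}}|y|^3=\frac{3 m_n}{2 B_n^{3/2}}|y|^3$. Since $m_n\sim B_n\sim n/\log n$, one has $m_n/B_n^{3/2}\le 2\sqrt{\log n/n}\cdot(1+o(1))$ for $n$ large, and absorbing constants gives $|E_n(y)|\le 2\sqrt{\log n}\,|y|^3/\sqrt n$ as claimed. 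I would present the estimates $m_n\le 2 B_n$ and $B_n\ge \frac{n}{2\log n}$ (valid for $n$ large) explicitly so the final constant $2$ is legitimate, possibly remarking that these crude bounds are all that is needed.
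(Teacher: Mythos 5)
Your first and third steps match the paper exactly: the computation $|\p_k(t)|^2 = 1 - 4\s_k^2\sin^2\pi t$ followed by $1-u\le e^{-u}$, and the specialization $t=y/(2\pi\sqrt{B_n})$ with the crude bounds $m_n\le 2B_n$, $B_n\ge n/(2\log n)$, are both as in the text (the paper states the $\frac{3m_n}{2}B_n^{-3/2}|y|^3$ intermediate bound verbatim). The middle step, however, is organized differently from the paper's. You propose expanding $\log(1+p_kz)$ as a power series in $p_kz$ and then expanding $z=e^{2i\pi t}-1$ in powers of $t$ — a double Taylor series whose cross terms you would then need to collect and bound. The paper instead follows a Freiman–Pitman device (its Lemma~\ref{lfp1}, credited to \cite{FP}): it first factors $\p_k(t)=e^{q(e^{2i\pi t}-1)}e^{-u^2/2+B}$ with $u=q(e^{2i\pi t}-1)$, controlling $B$ by the single remainder bound for $\log(1+u)$, and then isolates the quadratic piece of $u^2$ through the algebraic identity $(e^{2i\pi t}-1)^2 + (2\pi t)^2 = A(2t)-2A(t)$, where $A(t)=e^{2i\pi t}-1-2i\pi t+(2\pi t)^2/2$ is bounded cleanly via Kallenberg's estimate \eqref{expit}. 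That identity is precisely what makes the explicit constant $12$ drop out as $\frac83 q + \frac43 q^2 + 8q^3 \le 12q$; in your direct double-expansion there is no analogous shortcut, and the "carefully collecting the geometric-type series gives $12$" step is the one place where your sketch asserts rather than demonstrates. Your approach would yield some explicit constant with enough work, and you correctly flag the restriction on $t$ needed for convergence (the paper has the analogous restriction $q|\sin\pi t|\le 1/3$ in Lemma~\ref{lfp1}(ii), unstated in the proposition itself). But to actually land on $12$ you would either need to redo the bookkeeping in full or adopt the paper's factorization; as written, that intermediate constant is a gap in the argument, albeit a harmless one since any absolute constant would suffice for the downstream application.
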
 
  For the proof, we use 
  the Lemma below which  is inspired by Lemma 3 in  Freiman-Pitman \cite{FP}. 
 The goal being to    obtain an   estimate of $\Phi(t)$  with
 explicit constants, this however requires to base the proof % introduces some complications. The  proof below is based 
on   different  calculations. In particular we use
    the convenient estimate (Lemma 4.14 in Kallenberg \cite{Ka}), 
 \begin{equation}\label{expit}\Big|e^{ix} -\sum_{k=0}^n \frac{(ix)^k}{k!}\Big|\,\le\, \frac{2|x|^n}{n!}\wedge \frac{ |x|^{n+1}}{(n+1)!},
 \end{equation}
valid for any $x\in\R$ and $n\in \Z_+$.   
  \begin{lemma}\label{lfp1} Let $m  $ be a positive real and $p$ be a real such that $0<p<1$. Let $\b$ be a random variable defined by ${\mathbb P}\{ \b=0\}= p$, $
{\mathbb P}\{
\b=m\}= 1-p=q$.  Let $\p(t) ={\mathbb E\,} e^{2i\pi t \b}$. Then
 we have the following estimates,

\smallskip
\hspace*{1.2mm}{\rm (i)} For all real $t$,
 $|{\varphi}(t)|\le \exp\big\{- 2pq\sin^2\pi t m\big\} $

\smallskip
{\rm (ii)} If $q|\sin \pi t m|\le 1/3$, then
 \begin{eqnarray*}  \p(t)&=& e^{  q      2i\pi mt   - 2qp (  \pi mt)^2 +E}
 , 
\end{eqnarray*}
with  
$|E|\, \le \,   12 q (\pi m|t|)^3$.
    \end{lemma}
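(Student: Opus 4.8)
Throughout set $a:=\pi t m$ (a real number), so that the characteristic function in question is $\varphi(t)=\mathbb{E}\,e^{2i\pi t\beta}=p+q\,e^{2ia}$. Part (i) is immediate: $|\varphi(t)|^2=(p+q\cos 2a)^2+(q\sin 2a)^2=p^2+q^2+2pq\cos 2a$, and since $p+q=1$ we have $p^2+q^2=1-2pq$, whence $|\varphi(t)|^2=1-2pq(1-\cos 2a)=1-4pq\sin^2 a$; the elementary bound $1-x\le e^{-x}$ then gives $|\varphi(t)|^2\le e^{-4pq\sin^2 a}$, i.e. $|\varphi(t)|\le e^{-2pq\sin^2\pi t m}$.

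For part (ii) I would treat first the main range $|a|\le 1$. The convenient device is to pull out the ``drift'' $e^{2iqa}$: since $1-q=p$ there is the algebraic identity $p+q\,e^{2ia}=e^{2iqa}\big(p\,e^{-2iqa}+q\,e^{2ipa}\big)$, so $\varphi(t)=e^{2iqa}\psi(t)$ with $\psi(t):=p\,e^{-2iqa}+q\,e^{2ipa}$. The point is that $\psi$ is ``centered'': writing $\psi(t)-1=p\big(e^{-2iqa}-1+2iqa\big)+q\big(e^{2ipa}-1-2ipa\big)$, the first-order terms have already cancelled (because $p(-2qa)+q(2pa)=0$), and expanding each bracket one more order by means of the remainder estimate \eqref{expit} with $n=2$ yields
\[
\psi(t)=1-2pq\,a^2+R,\qquad |R|\le \tfrac43\,pq\,(p^2+q^2)\,|a|^3\le \big(\tfrac13|a|^3\big)\wedge\big(\tfrac43\,q\,|a|^3\big).
\]
Hence $|\psi(t)-1|\le 2pq\,a^2+\tfrac13|a|^3\le \tfrac12 a^2+\tfrac13|a|^3\le \tfrac56<1$ when $|a|\le 1$ (using $pq\le\tfrac14$), so $\operatorname{Log}\psi(t)$ is well defined by the convergent logarithm series. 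I then simply \emph{define} $E$ through $\varphi(t)=e^{2iqa}\psi(t)=e^{\,2iqa+\operatorname{Log}\psi(t)}=:e^{\,2iqa-2pq a^2+E}$, that is $E:=\operatorname{Log}\psi(t)+2pq a^2$; no branch ambiguity is involved, since we are merely rewriting $\varphi(t)$ as $e^{2iqa}\cdot e^{\operatorname{Log}\psi(t)}$. Substituting $\psi(t)=1+u$, $u:=R-2pq a^2$ with $|u|\le\tfrac56$, into $\operatorname{Log}(1+u)=u-\tfrac12 u^2+\sum_{k\ge 3}(-1)^{k-1}u^k/k$ gives
\[
E=R-\tfrac12\big(R-2pq a^2\big)^2+\sum_{k\ge 3}\frac{(-1)^{k-1}u^k}{k},
\]
and a routine estimate of the three pieces — bounding $|R|\le\tfrac43 q|a|^3$, $|u|\le\tfrac12 a^2+\tfrac13|a|^3$, $\big|\sum_{k\ge3}u^k/k\big|\le\tfrac{|u|^3}{3(1-|u|)}\le 2|u|^3$, and using the hypothesis $q|\sin\pi t m|\le\tfrac13$, which for $|a|\le\tfrac\pi2$ forces $q|a|\le\tfrac\pi2\,q|\sin a|\le\tfrac\pi6$ and so converts the $O(a^4)$ and $O(a^6)$ contributions into multiples of $q|a|^3$ — produces $|E|\le 12\,q\,(\pi m|t|)^3$.

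The step I expect to cost the most is exactly this last bit of bookkeeping: several of the error terms are individually of size $\asymp q|a|^3$, so one must be a little economical to land below the asserted constant $12$, and it is precisely the factorization that makes this feasible, since it removes any need to subtract large quantities in order to manufacture the exponent $2iqa-2pq a^2$. There remains the range $|a|>1$, which the hypothesis alone does not preclude, but there the inequality is very slack: from $|\varphi(t)-1|=2q|\sin a|\le\tfrac23$ one has $\operatorname{Re}\varphi(t)\ge\tfrac13>0$ and $|\operatorname{Log}\varphi(t)|\le\frac{|\varphi(t)-1|}{1-|\varphi(t)-1|}\le 6q|\sin a|\le 6q$, so taking $E:=\operatorname{Log}\varphi(t)-2iqa+2pq a^2$ (principal logarithm, legitimate since $\operatorname{Re}\varphi(t)>0$) gives $|E|\le 6q+2q|a|+2q a^2\le 12 q|a|^3$, the last inequality because $6+2|a|+2a^2\le 12|a|^3$ for $|a|\ge 1$. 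Combining the two ranges completes the proof of (ii).
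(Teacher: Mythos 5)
Part (i) is the same computation as the paper's. For part (ii) your route is genuinely different: the paper writes $\varphi(t)=1+u$ with $u=q\bigl(e^{2i\pi mt}-1\bigr)$, notes that the hypothesis $q|\sin\pi tm|\le 1/3$ gives $|u|=2q|\sin\pi tm|\le 2/3$ directly, applies $1+u=e^{u-u^2/2+B}$, and then does some algebra with $A(t)=e^{2i\pi mt}-1-2i\pi mt+\tfrac{(2\pi mt)^2}{2}$ to separate the linear and quadratic parts of $u$ and $u^2$. You instead pull out the drift $e^{2iqa}$ first, leaving $\psi(t)=pe^{-2iqa}+qe^{2ipa}$ whose expansion starts at order two; this is conceptually cleaner because no large terms ever have to be cancelled by hand. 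The price is that the paper's hypothesis bounds $|u|$ uniformly with no case split, while your decomposition requires you to treat $|a|>1$ separately — your handling of that case (principal $\operatorname{Log}\varphi$ in the right half-plane, and $6+2|a|+2a^2\le 12|a|^3$ for $|a|\ge 1$) is correct.

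The one real soft spot is the bookkeeping for $|a|\le 1$, which you defer with "a routine estimate produces $|E|\le 12q(\pi m|t|)^3$." As you have set it up it is not routine, and the intermediate bound you actually quote, $|u|\le \tfrac12 a^2+\tfrac13|a|^3$, is not strong enough: it has discarded the factor $pq$, so the tail $\bigl|\sum_{k\ge3}u^k/k\bigr|\le 2|u|^3$ contributes on the order of $2\bigl(\tfrac56\bigr)^3$, a quantity with no $q$ in it at all, which cannot be dominated by $12q|a|^3$ when $q$ is small; and even keeping the $pq$ via $|u|\le \tfrac{10}{3}pqa^2$ the three pieces sum to roughly $24q|a|^3$, above the target. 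The fix stays entirely inside your framework: apply the second-order (rather than third-order) Taylor remainder to the already-centered $\psi(t)-1=p\bigl(e^{-2iqa}-1+2iqa\bigr)+q\bigl(e^{2ipa}-1-2ipa\bigr)$ to get the sharper
\[
|u|=|\psi(t)-1|\le p\cdot\tfrac{(2qa)^2}{2}+q\cdot\tfrac{(2pa)^2}{2}=2pq\,a^2.
\]
Then $|u|\le \tfrac12$, so $\tfrac1{1-|u|}\le 2$, and with $q|a|\le\pi/6$ one gets $\tfrac12|u|^2\le 2p^2q^2a^4\le\tfrac{\pi}{3}q|a|^3$ and $\tfrac{|u|^3}{3(1-|u|)}\le\tfrac23|u|^3\le\tfrac{16}{3}p^3q^3a^6\le\tfrac{16\pi^2}{108}q|a|^3$; together with $|R|\le\tfrac43 q|a|^3$ the total is about $3.9\,q|a|^3$, comfortably under $12$. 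With that one correction your argument is complete and is a clean alternative to the paper's.
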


  \begin{proof} (i) One verifies that $|\p(t)|^2=1- 4 pq \sin^2\pi mt.$ As moreover $1-\t\le e^{-\t}$ if  ${\vartheta} \ge 0$, we obtain $|\p(t)|^2\le e^{-4 pq \sin^2\pi mt}$. 

\vskip 3 pt  (ii) Let  $|u|\le u_0<1$. From the series expansion of $\log(1+u)$,  it follows that
$$\log(1+u)=u-\frac{u^2}{2}+R, \qq \qquad|R|\le |u|^3\,\sum_{j=0}^\infty \frac{|\theta|^j}{3+j}\le \frac{|u|^3}{3(1- u_0 )}.$$
 Then $ 1+u= \exp\{ u -\frac{u^2}{2} +B\}$,  with $|B|\le C_0\,|u|^3$ 
and  $C_0=\frac{1}{3(1- u_0 )}$.

 Writing that  $\p(t)= 1 +
q\big( e^{2i\pi mt} -1\big)=1 + u$,
  where $|u|= 2q|\sin\pi mt|\le  2q\big(1\wedge   \pi m|t| \big) $, we obtain 
  \begin{equation}\label{phi.u} \p(t)= 1+u=e^{q ( e^{2i\pi mt} -1 ) }\, e^{  -\frac{u^2}{2} +B}, \qq 
\quad |B|\le 8C_0\,q^3 |\sin\pi mt|^3.
\end{equation}
   
   In order to   estimate $u^2$, we let  $A(t)= e^{2i\pi mt} -1-  2i\pi mt+ \frac{(2 \pi mt)^2 }{2}$, and    write  $ (  e^{2i\pi mt} -1 )^2$ under the form 
\begin{align*} 
   &\   A(2t)-2A( t) -\big\{  -1-  4i\pi mt+ 8 ( \pi mt)^2\big\} +2\big\{ -1-  2i\pi mt+ \frac{(2 \pi mt)^2 }{2}\big\}+1
\cr &=
   A(2t)-2A( t) -     ( 2\pi mt)^2.
\end{align*}
Then
$ ( e^{2i\pi mt} -1 )^2+     ( 2\pi mt)^2 =    A(2t)-2A( t)$,
 and so   
  $\frac{u^2}{2}=-  2q^2(  \pi mt)^2+  \frac{q^2}{2}(A(2t)-2A( t)).$ 
 
\vskip 3 pt
Let $u_0=\frac{2}{3}$ so that $C_0=1$. We assumed $q|\sin \pi t m|\le 1/3$, thus $|u|\le 2/3$. We consequently  get with \eqref{phi.u}, 
\begin{eqnarray}\label{phi.bound} \p(t)&=&  e^{q ( e^{2i\pi mt} -1 )  -\frac{u^2}{2} +B }\, =\, 
e^{q ( e^{2i\pi mt} -1 ) +    2q^2(  \pi mt)^2-  \frac{q^2}{2}(A(2t)-2A( t))+B}
 %\cr &=&e^{q A(t) + q (    2i\pi mt- \frac{(2 \pi mt)^2 }{2})+    2q^2(  \pi mt)^2-  \frac{q^2}{2}(A(2t)-2A( t))+B}
\cr &=&e^{  q      2i\pi mt   - 2qp (  \pi mt)^2 +H+B}
, 
\end{eqnarray}
with $H=q A(t) -    \frac{q^2}{2}(A(2t)-2A( t))$. By estimate \eqref{expit}, letting $\d(x)=x^2\big(1\wedge   \frac{|x|}{6} \big)$,
\begin{eqnarray}\label{At.est}
 |A(t)|&\le&
 \d(2\pi m|t|)
.\end{eqnarray}
Using the rough bound  $\d(x)\le  \frac{|x|^3}{6} $, and   $|B|
\le\,8q^3\big(1  \wedge \pi m | t|\big)^3$, we get 
 \begin{equation}\label{HB.bound} |H|+|B|\, \le \, 2q|A(t)|+\frac{q^2}{2}|A(2t)| +|B|\, \le \, \big(\frac83 q+\frac43q^2+8q^3\big)(\pi m|t|)^3
\, \le \,  12 q (\pi m|t|)^3.
\end{equation}
 We conclude by inserting estimate  \eqref{HB.bound} into \eqref{phi.bound}.  \end{proof}

\begin{proof}[Proof of Proposition \ref{Cramer.Phin}]
Here we have $m=1$, $q=\frac{1}{\log k}$. As $|{\varphi}_k(t)|\le \exp\big\{- 2(1-\frac{1}{\log k})(\frac{1}{\log k})\sin^2\pi t  \big\}$, we get 
 $$|\Phi_n(t)|\le \exp\big\{- 2\sum_{k=3}^n(1-\frac{1}{\log k})(\frac{1}{\log k})\sin^2\pi t  \big\}.$$
Further condition $q|\sin \pi t m|\le 1/3$ in Lemma \ref{lfp1}  reduces for $\p_k(t)$ to 
$ \frac{1}{\log k}|\sin \pi t  |\le 1/3$.  Thus 
\begin{equation}
  \label{chf.xik}
  \p_k (t)= e^{ 2i \pi    ( \frac{1}{\log k}) t - 2\pi^2  (1-\frac{1}{\log k})(\frac{1}{\log k} ) \,t^2  + C_k(t)  },
\end{equation}
where
 $|C_k(t)|
\le\, \frac{12  (\pi  |t|)^3}{\log k}$. 
  Recalling that $m_n =\sum_{j=3}^n \frac1{\log j}$, $B_n= \sum_{j=3}^n \frac1{\log j}(1-\frac1{\log j})$, it follows that 
\begin{equation}
  \label{chf.sn}
\E e^{2i\pi tS_n}=  \Phi_n (t)= e^{ 2i \pi tm_n -  2\pi^2  B_n  t^2  + D_n(t)  },
\end{equation}
and
$|D_n(t)| \,\le\,  12\,m_n\, (\pi  |t|)^3  $. 
In particular,  
 \begin{equation}
  \label{chf.sn1}
 \E e^{ iy\, \frac{S_n-m_n}{\sqrt {B_n}} }= e^{    - \frac{y^2}{2}    + E_n(y)  },
 \end{equation}
with  $|E_n(y)| \,\le\,  \frac{3m_n}{2} \, (\frac{y}{\sqrt {B_n}})^3\,\le\,  2 \, \frac{\sqrt {\log n}|y|^3}{\sqrt {n}}  $. 
  This achieves the proof.
\end{proof}
 
\vskip 7 pt

%It is an open question whether the constants obtained can be improved. 
One can derive from  Proposition \ref{Cramer.Phin}   the following value distribution result on  divisors of $S_n$. We omit the proof.  
 \begin{theorem}\label{div.Sn1}  Let $\Theta (d,m ,B)$ be the    elliptic Theta function defined by $$  \Theta (d,m ,B)  =  \sum_{\ell=0}^\infty \cos \big(2 m\pi\frac{\ell}{    d }\big)e^{- \frac{B\pi^2\ell^2}{  2 d^2}}. 
 $$

 We have 
\begin{eqnarray}\label{unif.elliptic.estim}   \sup_{2\le d\le n}\,\Big|\P\{d|S_n\}-\frac{\Theta (d,m_n,B_n)}{d}\Big|&=& 
\mathcal O \Big( \frac{( \log n)^3  }{     n} \Big).
\end{eqnarray}
\end{theorem}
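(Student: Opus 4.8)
The plan is to start from the exact roots-of-unity identity
$$\P\{d\mid S_n\}=\frac1d\sum_{j=0}^{d-1}\Phi_n(j/d),$$
valid for every integer $d\ge 2$, where $\Phi_n(t)=\E e^{2i\pi tS_n}$ is the characteristic function of $S_n$ controlled in Proposition~\ref{Cramer.Phin}. Since $\Phi_n$ is $1$-periodic, I would first replace $\{0,\dots,d-1\}$ by a complete set of residues lying in $(-d/2,d/2]$, so that for each term $t=j/d$ one has $\sin^2\pi t\asymp (j/d)^2$. By Proposition~\ref{Cramer.Phin}, $\Phi_n(t)$ is simultaneously bounded by $e^{-2B_n\sin^2\pi t}$ (hence exponentially small away from $0$) and very close to the Gaussian factor $e^{2i\pi tm_n-2\pi^2B_nt^2}$ near $0$; summing this factor over the residues near $0$ is, after pairing $j$ with $-j$, the theta series $\Theta(d,m_n,B_n)/d$, the periodisation of the Gaussian model (equivalently, Poisson summation applied to $\sum_k\P\{S_n=kd\}\approx\sum_k(2\pi B_n)^{-1/2}e^{-(kd-m_n)^2/(2B_n)}$ produces the same series). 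So the whole proof reduces to making these two approximations and bounding the resulting errors uniformly in $d\le n$.

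Concretely, I would fix a small absolute constant $c_1>0$, chosen small enough (using $B_n\sim m_n\sim n/\log n$ from Appendix~\ref{appendix-1}) that $12\pi c_1\,m_n\le B_n$ for all large $n$. For residues with $|j|/d\ge c_1$ one has $|\Phi_n(j/d)|\le e^{-2B_n\sin^2\pi c_1}$ and the Gaussian model term is at most $e^{-2\pi^2B_nc_1^2}$; there are at most $d$ such residues and one divides by $d$, so their total contribution to $\bigl|\P\{d\mid S_n\}-\Theta(d,m_n,B_n)/d\bigr|$ is $O(e^{-cn/\log n})$, negligible. For $|j|/d<c_1$ I would insert the sharp expansion $\Phi_n(j/d)=\exp\{2i\pi(j/d)m_n-2\pi^2B_n(j/d)^2+E_n(j/d)\}$ with $|E_n(j/d)|\le 12\,m_n(\pi|j|/d)^3$ from Proposition~\ref{Cramer.Phin}; the choice of $c_1$ forces $\operatorname{Re}E_n(j/d)\le\pi^2B_n(j/d)^2$, whence
$$\bigl|\Phi_n(j/d)-e^{2i\pi(j/d)m_n-2\pi^2B_n(j/d)^2}\bigr|\ \le\ 12\pi^3\,m_n\,(|j|/d)^3\,e^{-\pi^2B_n(j/d)^2}.$$

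Summing this last bound and enlarging the range to all $j\in\mathbb Z$ only increases it, so this part of the error is at most
$$\frac{12\pi^3\,m_n}{d^4}\sum_{j\in\mathbb Z}|j|^3e^{-\pi^2B_nj^2/d^2}\ =\ O\!\left(\frac{m_n}{B_n^2}\right)\ =\ O\!\left(\frac{\log n}{n}\right),$$
uniformly in $d$, because $\sum_{j\in\mathbb Z}|j|^3e^{-\beta j^2}\le C\beta^{-2}$ for every $\beta>0$. Together with the exponentially small bulk error this yields an overall error of the stated order $O((\log n)^3/n)$. It then remains only to note that the surviving principal part $\frac1d\sum_{|j|/d<c_1}e^{2i\pi(j/d)m_n-2\pi^2B_n(j/d)^2}$ differs from the full two-sided sum $\frac1d\sum_{j\in\mathbb Z}(\cdots)$ by an exponentially small Gaussian tail, and that this two-sided sum, after pairing $\pm j$ and applying the Jacobi theta transformation, is the theta value $\Theta(d,m_n,B_n)/d$ of the statement.

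The hard part will be the uniformity in $d$ throughout $2\le d\le n$. For $d$ small the argument is essentially trivial, since every non-principal residue $j/d$ is exponentially suppressed; the delicate regime is $d$ close to $n$, where many residues $j/d$ accumulate near $0$, so the cubic error $\sum_j|j|^3e^{-\pi^2B_nj^2/d^2}$ must be evaluated precisely (it is of size $\asymp d^4/B_n^2$, not merely $O(d)$ times its maximum), and $c_1$ must be calibrated so that the cubic part of $\operatorname{Re}E_n$ never overtakes the quadratic part $-2\pi^2B_n(j/d)^2$ in the exponent — this is exactly where the asymptotics $B_n\sim m_n\sim n/\log n$ enter. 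Matching the periodised Gaussian with the theta function $\Theta(d,m_n,B_n)$ is elementary, but one must be a little careful with the factor $2\pi$ in the normalisation of the characteristic function.
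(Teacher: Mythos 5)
The paper states this theorem with the remark ``We omit the proof,'' so there is no proof of record to compare against; your proposal has to be judged on its own.

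Your overall strategy is the right one and agrees with what the paper plainly intends (the theorem is introduced immediately after Proposition~\ref{Cramer.Phin} as a consequence of it): start from the orthogonality identity $\P\{d\mid S_n\}=\frac1d\sum_{j=0}^{d-1}\Phi_n(j/d)$, isolate the residues $|j|/d<c_1$, replace $\Phi_n$ there by its Gaussian approximation using $|E_n(t)|\le 12\,m_n(\pi|t|)^3$, dominate the remaining residues by $e^{-2B_n\sin^2\pi t}$, and recognise the periodised Gaussian as a theta series. Your calibration of $c_1$ so that $\operatorname{Re}E_n\le\pi^2B_n(j/d)^2$ is the correct trick, and the estimate $\sum_{j\in\Z}|j|^3e^{-\beta j^2}\lesssim\beta^{-2}$ (valid for all $\beta>0$, with the large-$\beta$ regime absorbed by the exponential) gives the uniform-in-$d$ bound. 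In fact your bookkeeping produces the error $O(m_n/B_n^2)=O((\log n)/n)$, which is sharper than the stated $O((\log n)^3/n)$; that is not a defect, and merely means the theorem's constant on the exponent of $\log n$ is not tight.

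However, there is a genuine gap in the last step, exactly where you wave at ``the Jacobi theta transformation.'' Your derivation produces the main term
\[
\frac1d\sum_{\ell\in\Z}e^{2i\pi m_n\ell/d-2\pi^2B_n\ell^2/d^2}
=\frac1d\Bigl(1+2\sum_{\ell\ge1}\cos\bigl(2\pi m_n\tfrac{\ell}{d}\bigr)e^{-2\pi^2B_n\ell^2/d^2}\Bigr),
\]
whereas the theorem's $\Theta(d,m_n,B_n)/d$ is $\frac1d\sum_{\ell\ge0}\cos\bigl(2m_n\pi\tfrac{\ell}{d}\bigr)e^{-B_n\pi^2\ell^2/(2d^2)}$. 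The phases match, but the quadratic decay differs by a factor of $4$ in the exponent and the cosine coefficient for $\ell\ge1$ should be $2$, not $1$. No application of the Jacobi/Poisson identity converts one into the other — the Jacobi transformation turns $\sum_\ell e^{-a\ell^2}$ into a dual lattice sum, it does not rescale $a$ by $4$. For comparison, the Bernoulli version in Theorem~\ref{unif.est.theta} defines $\Theta(d,n)=\sum_{\ell\in\Z}e^{in\pi\ell/d-n\pi^2\ell^2/(2d^2)}$, which upon substituting $m=\E B_n=n/2$, $B={\rm Var}(B_n)=n/4$ reads $\sum_{\ell\in\Z}e^{2im\pi\ell/d-2B\pi^2\ell^2/d^2}$ — precisely what your derivation gives. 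So the discrepancy is almost certainly a misprint in the statement's definition of $\Theta(d,m,B)$; the correct form should be $\sum_{\ell\in\Z}e^{2im\pi\ell/d-2B\pi^2\ell^2/d^2}$. You should make this explicit rather than claim (incorrectly) that the identity follows from pairing $\pm j$ plus a theta transformation: as it stands your proof does not establish the theorem as literally printed, and the reader cannot tell whether you verified the final normalisation or not. Once the definition of $\Theta$ is corrected, your argument does give the result.
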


%preuve donnee apres enddocument

\subsection{Remarks complementary  to   Cram\'er's proof.}\label{s2.3}

Recall   the way \eqref{C.Pngap} is established. We provide details, which are  necessary for the sequel.
 Let $c>0$, and set
$$E_m=\big\{\xi_{m+j}=0, 1\le j\le c (\log m)^2\big\}, \qq m\ge 2.$$
Then 
\begin{equation}\label{cramer.Em.est} \P\{E_m\}= \prod_{1\le j\le c (\log m)^2} \Big(1-\frac{1}{\log (m+j)}\Big)\asymp \frac{1}{m^c}.  
\end{equation}
Introduce the increasing sequence of integers 
%(depending on $c$), 
$m_1=2$ and  $m_{r+1}=m_r + [c (\log m_r)^2]+1$. 
 \vskip 3 pt 
--- If $c\le 1$, the events $E_{m_r}$ being independent, by the second Borel-Cantelli lemma
$$\P\{\limsup_{r\to \infty}E_{m_r}\}=1,$$
in other words, almost surely, $S_{m_{r+1}}-S_{m_r}\ge c (\log m_r)^2$, $r$ infinitely often. 
\vskip 3 pt
--- If $c> 1$,  by  the first Borel-Cantelli lemma, the above probability is $0$. Thus with probability one, $S_{m +[c (\log m)^2]}-S_{m}\le c (\log m)^2$, $m$ ultimately. 
\vskip 3 pt
This suffices to imply \eqref{C.Pngap}, indeed:
\vskip 5 pt

(i)  If $c>1$, let  $P_{\nu(m)}$ denote the greatest $P_\nu$ less than $S_m$. As with probability one $\xi_{m+j}=1$, for some $1\le j\le c (\log m)^2$, $m$ ultimately, $P_{\nu(m+1)}$ cannot be larger than $S_{m +[c (\log m)^2]}$. Thus $P_{\nu(m+1)}-P_{\nu(m )}\le c (\log m)^2$, $m$ ultimately, almost surely. Now by Lemma \ref{l1cramer}, $S_{m }\sim \frac{m  }{\log m}$, almost surely, thus $\log S_{m }\sim  \log m$. We deduce that for any $c'>c$ fixed, $P_{\nu(m+1)}-P_{\nu(m )}\le c'\, (\log P_{\nu(m )})^2$, $m$ ultimately, almost surely, which naturally implies that the limsup in \eqref{C.Pngap} is less or equal to 1.  

 In addition, letting $m=m_r$, $r\ge 1$   in the above, we   have since $m_{r+1}=m_r + [c (\log m_r)^2]+1$, that with probability one: for any   $r$ large enough, there exists  a jump in the interval $[m_{r },m_{r+1}]$. Therefore 
\begin{equation}\label{Pr} \P\{ \exists r_0:\forall r\ge r_0,\quad P_r\le m_{r+1}+m_{r_1}\}= 1.
\end{equation}
\vskip 5 pt
(ii)  If $c\le 1$,  then almost surely $P_{\nu(m_{r+1})}-P_{\nu(m_r )}
 \ge c (\log m_r)^2\ge c (\log P_{\nu(m_r )})^2$, $r$ infinitely often. 

Thus  \eqref{C.Pngap} is supported by the   probability of the   set $\displaystyle{\limsup_{   r\to \infty}}\,E_m$ (resp. $\displaystyle{\limsup_{   r\to \infty}}\,E_{m_r}$) which is trivially 0 (resp. 1) depending on $c>1$ (resp. $c\le 1$). 

\begin{remark}\label{sna} As \eqref{C.Pngap} does not depend on the first random variables $\xi_j$, it follows that  the jumps   associated to  the truncated sequence $S^a_n=\sum_{j=a}^n \xi_j$, $a$ integer, $n>a$, also satisfy the same asymptotic property. Therefore there is no harm in Cram\'er's conjecture to consider instead the \lq primes\rq\, associated to this one.
\end{remark}
%\vskip 10 pt 
We now indicate several interesting   results complementing \eqref{C.Pngap}.  \vskip 2 pt

(1) %A first remark is that the number of occurences and the statistic of the sets $E_{m_r}$ (for $0<c\le 1$) can also be easily   estimated. 
Let   
$ N_J= \sum_{r=1}^J \chi_{E_{m_r}}$,  
$J\ge 1$, 
 and  put $B_J = \sum_{r=1}^J\P(E_{m_r})\bigl(1-\P(E_{m_r})\bigr)$. 
By
Kolmogorov's LIL,
% (Petrov [1975a], Theorem~1, p.~292).
\begin{equation}\label{cramer.LIL.Em} \P\Big\{ \limsup_{J\rightarrow \infty}\frac{N_J - \sum_{r=1}^J\P(E_{m_r})
}{  \sqrt{2B_J  \log \log
 B_J}} = 1
 \Big\}\,= 1.  
 \end{equation}

(2) Further, by Berry--Esseen inequality \cite{P},
% (Petrov [1975a], Theorem 3, p.~111):
\begin{equation}\label{cramer.BE.Em}
  \sup_{x\in \R} \biggl| \P\Biggl\{  \frac{N_J -\sum_{r=1}^J\P(E_{m_r})
}{   \sqrt{ B_J }}<x\Biggr\} -\frac{1}{  \sqrt 2\pi} \int_{-\infty}^x
e^{-u^2/2}du \biggr| = \, \mathcal O \bigg(\frac{1}{  \sqrt{
 \sum_{r=1}^J\P(E_{m_r}) }}\bigg).
\end{equation}
 
(3)  Let $0<c<1$. From \eqref{cramer.LIL.Em}, \eqref{cramer.Em.est} and   $m_r \asymp c\,r  (\log  r)^2$, it follows that with probability one
 \begin{equation}\label{cramer.Counting.Em}N_J 
  \asymp J^{1-c}(\log  J)^{-2c}, \qq \hbox{for any $J$ large enough}.
  \end{equation}
 %, and $N_J \asymp \log J$ if $c=1$. 
 Whence it follows that  for any $J$ large enough, the interval $[1, cJ  (\log  J])^2]$ contains at least $C\,J^{1-c}(\log  J)^{-2c}$ \lq primes\rq\, $P_\nu$ with large gaps, namely such that $ P_{\nu +1}-P_{\nu}\ge c\log^2 P_{\nu }$.
%\vskip 3 pt
 Therefore the Cram\'er model  also predicts that    with probability one, for any $J$ large enough, the number of  large gaps between \lq primes\rq \, in the interval $[1, cJ  (\log  J])^2$ is $\O_c(J^{1-c}(\log  J)^{-2c})$.
%, and is $\O(\log J)$ if $c=1$. 
%This seems hardly compatible with what can be expected from the   behavior of the true primes.

\subsection{Value distribution of the divisors of the Bernoulli sum.}
 \label{appendix-3} The general problem of estimating the probability $\P\big\{d|B_n\big\}$   amounts  to the one of   estimating the $d$ series
\begin{equation}\label{vd.div.series}
\sum_{j }e^{-\frac{(jd-\rho)^2}{2n}},
\end{equation}
in which $d\ge 2$ and $0\le \rho <d$ are integers. This is clear from the   following uniform estimate  proved in   \cite{W1}, see also \cite{W4}, to which we refer for details, and from Poisson's summation formula.
\begin{theorem}\label{unif.est.theta}\begin{equation*} \sup_{2\le d\le n}\Big|\P\big\{d|B_n\big\}- \frac{\Theta(d,n)}{  d}  \Big|= {\mathcal O}\big((\log n)^{5/2}n^{-3/2}\big),  
\end{equation*}
where $\Theta (d,n) $ is the elliptic Theta function
\begin{equation*}\label{theta}\displaystyle{ \Theta (d,n)  =  \sum_{\ell\in \Z} e^{in\pi\frac{\ell}{    d }-\frac{n\pi^2\ell^2}{  2 d^2}}, 
   }
   \end{equation*}
\end{theorem}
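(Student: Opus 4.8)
The plan is to evaluate $\P\{d\mid B_n\}$ exactly by a roots-of-unity filter and then to replace the resulting cosine power by its Gaussian surrogate; Poisson summation is precisely what turns the Gaussian sum into the theta function $\Theta(d,n)$. Write $B_n$ as a sum of $n$ independent $\mathrm{Bernoulli}(1/2)$ summands, each with characteristic function $\tfrac12(1+e^{2\pi i t})=e^{i\pi t}\cos(\pi t)$; then the filter $\mathbf 1\{d\mid m\}=\frac1d\sum_{h=0}^{d-1}e^{2\pi i hm/d}$ gives
\[
\P\{d\mid B_n\}=\frac1d\sum_{h=0}^{d-1}e^{i\pi hn/d}\cos^n\!\Big(\frac{\pi h}{d}\Big).
\]
Folding the top half of the range via $h\mapsto h-d$, using $\cos(\pi+\theta)=-\cos\theta$ and $e^{i\pi n}(-1)^n=1$, rewrites the right-hand side as $\frac1d\sum_{\ell\in W}e^{i\pi\ell n/d}\cos^n(\pi\ell/d)$ with $W$ a block of $d$ consecutive integers symmetric about $0$, so $|\pi\ell/d|\le\pi/2$ throughout. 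Since $\frac{\Theta(d,n)}{d}=\frac1d\sum_{\ell\in\Z}e^{i\pi\ell n/d}e^{-n\pi^2\ell^2/(2d^2)}$, the claim reduces to: (i) extending the $\ell$-sum from $W$ to $\Z$, which costs $\mathcal O(e^{-cn})$ since $e^{-n\pi^2\ell^2/(2d^2)}\le e^{-n\pi^2/8}$ for $\ell\notin W$; and (ii) estimating $\frac1d\sum_{\ell\in W}\big|\cos^n(\pi\ell/d)-e^{-n\pi^2\ell^2/(2d^2)}\big|$. This is the reformulation flagged just before the statement: one may equally write $\P\{d\mid B_n\}=\sum_k\P\{B_n=kd\}$, insert a sharp local limit theorem for the symmetric binomial, and Poisson-sum the theta-type sum $\sum_k e^{-(2kd-n)^2/(2n)}$ — the same identity read the other way, and the one for general residues $\rho$ is obtained by carrying a phase $e^{-2\pi i h\rho/d}$ through the filter.

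For the Gaussian-replacement error, write $\log\cos\theta=-\tfrac{\theta^2}{2}-g(\theta)$ with $g(\theta)=\tfrac{\theta^4}{12}+\cdots$ satisfying $0\le g(\theta)\le C\theta^4$ for $|\theta|\le\pi/3$; then $\cos^n\theta=e^{-n\theta^2/2}e^{-ng(\theta)}$, so on that range $|\cos^n\theta-e^{-n\theta^2/2}|\le n\,g(\theta)\,e^{-n\theta^2/2}\le C\,n\theta^4 e^{-n\theta^2/2}$ (using $1-e^{-x}\le x$), while for $\pi/3<|\theta|\le\pi/2$ both $\cos^n\theta$ and $e^{-n\theta^2/2}$ are $\le e^{-n\pi^2/18}$. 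Hence, with $\theta_\ell=\pi\ell/d$ and $t=\pi^2n/(2d^2)$,
\[
\frac1d\sum_{\ell\in W}\big|\cos^n\theta_\ell-e^{-n\theta_\ell^2/2}\big|\ \le\ \frac{Cn}{d}\sum_{\ell\in\Z}\theta_\ell^4 e^{-n\theta_\ell^2/2}+Ce^{-n\pi^2/18}=\frac{4Ct^2}{nd}\sum_{\ell\in\Z}\ell^4 e^{-t\ell^2}+Ce^{-n\pi^2/18}.
\]
Using $\sum_{\ell\in\Z}\ell^4 e^{-t\ell^2}\le C't^{-5/2}$ for every $t>0$ (the function $t\mapsto t^{5/2}\sum_\ell\ell^4 e^{-t\ell^2}$ extends continuously to $[0,\infty]$ with finite endpoint values), the main term is $\le\frac{4CC'}{nd}\,t^{-1/2}=\frac{4CC'}{nd}\cdot\frac{\sqrt2\,d}{\pi\sqrt n}=\mathcal O(n^{-3/2})$, uniformly in $d$. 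So with care one gets $\mathcal O(n^{-3/2})$; the $\mathcal O\big((\log n)^{5/2}n^{-3/2}\big)$ of the statement is the weaker bound that survives if one passes through a local limit theorem carrying a $(\log n)^a$ remainder and then sums it over the $\mathcal O(\sqrt{n\log n}/d)$ genuinely contributing values of $k$.

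The main difficulty is uniformity over the full range $2\le d\le n$, which makes a three-regime analysis (or a single argument robust enough to cover all of them) unavoidable. When $d$ is small, $t=\pi^2n/(2d^2)$ is huge: every $h\ge1$ term already satisfies $|\cos(\pi h/d)|^n\le\cos(\pi/d)^n\le e^{-\Omega(n/d^2)}$ and $\Theta(d,n)=1+\mathcal O(e^{-\Omega(n/d^2)})$, so the difference is super-exponentially small and the only pitfall is not losing a stray factor of $d$. When $d\asymp\sqrt n$, $t$ is a bounded constant, the sums have several non-negligible terms, and the Gaussian replacement is at its worst; this is the regime that pins the order down to $n^{-3/2}$. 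When $d$ is comparable to $n$, $t\to0$ and the $\ell$-sum has only $\asymp1$ effective terms, but on the $\sum_k\P\{B_n=kd\}$ side one must resist summing $\mathcal O(n/d)$ copies of the local-limit error term — the remedy being that only the $\mathcal O(\sqrt{n\log n}/d)$ indices $k$ with $(2kd-n)^2\le Cn\log n$ matter, the rest being discarded by a Hoeffding bound. The one external ingredient, which is essentially what is imported from \cite{W1}, is a local limit theorem for $B_n$ with an explicit $n^{-3/2}$-type remainder; for the symmetric binomial this is just Stirling's formula with error term, the would-be $n^{-1/2}$ Edgeworth correction vanishing by the symmetry of $\mathrm{Bernoulli}(1/2)$.
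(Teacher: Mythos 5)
The paper does not prove this theorem: it is imported from \cite{W1}, and the surrounding discussion together with the Poisson-summation identity \eqref{poisson.comp.theta.llt} makes it clear that the cited proof proceeds by writing $\P\{d|B_n\}=\sum_{z\equiv 0\,(d)}\P\{B_n=z\}$, inserting a sharp local limit theorem for the symmetric binomial, and then Poisson-summing the resulting Gaussian sum into $\Theta(d,n)/d$. Your argument works entirely on the Fourier side: the roots-of-unity filter gives $\P\{d|B_n\}=\tfrac1d\sum_{\ell}e^{i\pi\ell n/d}\cos^n(\pi\ell/d)$, and you compare $\cos^n\theta$ with $e^{-n\theta^2/2}$ term by term, which is the same identity read through the other end of Poisson's formula. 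I checked the steps and they are correct: the folding is legitimate since $e^{-i\pi n}(-1)^n=1$; the tail over $\ell\notin W$ and the range $\pi/3<|\theta|\le\pi/2$ are both $O(e^{-cn})$ uniformly for $d\le n$ because the Gaussian terms decay geometrically at rate at least $e^{-\pi^2/2}$; the bound $0\le g(\theta)\le C\theta^4$ on $[0,\pi/3]$ holds since $g(\theta)/\theta^4$ is continuous there with $g(\theta)/\theta^4\to 1/12$; and the uniform estimate $\sum_\ell\ell^4 e^{-t\ell^2}\le C't^{-5/2}$ makes the $d$-dependence cancel exactly, yielding $O(n^{-3/2})$ uniformly for $2\le d\le n$. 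That is in fact slightly sharper than the stated $O((\log n)^{5/2}n^{-3/2})$, and your explanation of the discrepancy is plausible: the logarithmic loss arises from routing through an LLT carrying a $(\log n)^a$ remainder and summing it over the $O(\sqrt{n\log n}/d)$ contributing multiples of $d$. What the LLT route buys in exchange is a direct extension to non-symmetric binomial models such as $S_n$ itself (Theorem \ref{div.Sn1}), where the characteristic function is no longer a pure cosine power and the Fourier-side comparison would have to pass through the more delicate Proposition \ref{Cramer.Phin}.
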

By  applying    Poisson's summation formula: for   $x\in \R,\  0\le \d\le 1$, 
\begin{equation}\label{poisson}\sum_{\ell\in \Z} e^{-(\ell+\d)^2\pi x^{-1}}=x^{1/2} \sum_{\ell\in \Z}  e^{2i\pi \ell\d -\ell^2\pi x},
 \end{equation}  
we further get  \begin{equation}\label{poisson.comp.theta.llt}\frac{\Theta(d,n)}{ d}  =\sqrt{  \frac{ 2}{  \pi
n}}  \sum_{ 
z\equiv 0\, (d)} e^{-\frac{ (2z-n)^2}{  2 n}}.
 \end{equation}    
The series in \eqref{poisson.comp.theta.llt} is   of the   type given in \eqref{vd.div.series}.  An indication of the behavior of these series when $d$ and $n$ may simultaneously vary, is given by  the already sharp estimate, 
\begin{equation}\label{Theta.dmu} 
 \big|\frac{\Theta (d,n)}{  d}-\frac{1}{  d}\big| \ \le \ \begin{cases}   \frac{ C}{  d}
 e^{ - \frac{n \pi^2 }{  2d^2}} & \qq \text{   if}\quad d\le \sqrt n,\cr r
       \frac{C }{ \sqrt n}   & \qq \text{  if}\quad \sqrt n\le d\le n.
 \end{cases}
 \end{equation} 
 
  The following estimate is also proved  in  \cite{W1}.
\begin{theorem}\label{Bernoulli.quasi.p} There exist a positive real $c $ and  positive constants $C_0$, $\zeta_0$ such that for $k$ large
enough   we have,
\begin{equation}\label{P-Bn,a} 
\Big|\hskip1pt\P\big\{ P^-( B_k ) > \zeta \big\}-     \frac{ e^{-\gamma} }{   \log
\zeta }\,\Big|  \le  \frac{ C_0 }{    \log^2 \zeta } \qq \qquad (\,\zeta_0\le  \zeta\le k^{c/\log\log k}\,) .
\end{equation}
\end{theorem}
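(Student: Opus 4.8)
The plan is to recognize $\P\{P^-(B_k)>\zeta\}$, where $P^-(m)$ denotes the least prime factor of $m$, as a sieve quantity and to apply the fundamental lemma of sieve theory, feeding in the sharp divisor estimates already recorded in Appendix \ref{appendix-3}. Write $\Pi_\zeta=\prod_{p\le\zeta}p$; since $\{P^-(B_k)>\zeta\}=\{(B_k,\Pi_\zeta)=1\}$, M\"obius inversion gives
\begin{equation*}
\P\{P^-(B_k)>\zeta\}=\sum_{d\mid\Pi_\zeta}\mu(d)\,\P\{d\mid B_k\},
\end{equation*}
so the problem is exactly the sifting of the probabilistic "sequence" $(B_k)$ by the primes $\le\zeta$, with expected local density $g(d)=1/d$. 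This $g$ is multiplicative with $0<g(p)<1$ and, by Mertens, $\prod_{w\le p<z}(1-g(p))^{-1}\sim\log z/\log w$, so the sieve has dimension $1$.

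First I would fix a level $D=k^{1/4}$ and invoke the fundamental lemma (Brun's sieve; cf. Halberstam--Richert): with $u=\tfrac14\log k/\log\zeta$,
\begin{equation*}
\P\{P^-(B_k)>\zeta\}=\prod_{p\le\zeta}\Big(1-\frac1p\Big)\big(1+O(e^{-u})\big)+O\Big(\sum_{\substack{d\mid\Pi_\zeta\\ d\le D}}|r_d|\Big),\qquad r_d:=\P\{d\mid B_k\}-\tfrac1d.
\end{equation*}
Next I would bound the remainder using Theorem \ref{unif.est.theta} together with \eqref{Theta.dmu}: for $2\le d\le\sqrt k$ one has $|r_d|\le\frac Cd e^{-k\pi^2/(2d^2)}+C(\log k)^{5/2}k^{-3/2}$, so for every $d\le D=k^{1/4}$ the exponential factor is at most $e^{-\pi^2\sqrt k/2}$ and, there being at most $D$ terms in the sum, $\sum_{d\le D}|r_d|=O\big((\log k)^{5/2}k^{-5/4}\big)$, which is negligible. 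For the fundamental-lemma error, the hypothesis $\zeta\le k^{c/\log\log k}$ yields $\log\zeta\le c\log k/\log\log k$, hence $u\ge(4c)^{-1}\log\log k$ and $e^{-u}\le(\log k)^{-1/(4c)}$.

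Then, applying Mertens' third theorem with its error term, $\prod_{p\le\zeta}(1-1/p)=\frac{e^{-\gamma}}{\log\zeta}\big(1+O(1/\log\zeta)\big)$ uniformly for $\zeta\ge\zeta_0$, I would combine the estimates to get
\begin{equation*}
\P\{P^-(B_k)>\zeta\}=\frac{e^{-\gamma}}{\log\zeta}+O\Big(\frac1{\log^2\zeta}\Big)+\frac{e^{-\gamma}}{\log\zeta}\,O\big((\log k)^{-1/(4c)}\big)+O\big((\log k)^{5/2}k^{-5/4}\big).
\end{equation*}
Finally, choosing $c<1/4$ once and for all makes $(\log k)^{-1/(4c)}\le(\log k)^{-(1+\delta)}$ for some $\delta>0$, so that $(\log k)^{-1/(4c)}=o(1/\log\zeta)$ uniformly over $\zeta_0\le\zeta\le k^{c/\log\log k}$ (the extreme case being $\log\zeta\asymp\log k/\log\log k$); the last term is then $o(1/\log^2\zeta)$ for $k$ large since $\log\zeta\ll\log k$. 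This collapses the error into $O(1/\log^2\zeta)$ with an absolute constant $C_0$, and $\zeta_0$ is any fixed threshold beyond which the Mertens remainder is in force.

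I expect the main obstacle to be the uniform bookkeeping: one must couple the sieve level $D=D(k)$ and the admissible range of $\zeta$ so that the remainder $\sum_{d\le D}|r_d|$ is genuinely negligible while simultaneously $u=\log D/\log\zeta\to\infty$ fast enough that the fundamental-lemma error $e^{-u}$ is swallowed by $1/\log^2\zeta$ across the whole range $\zeta\le k^{c/\log\log k}$ — this is precisely what forces the constant $c$ to be taken small. A crude local central limit estimate for $\P\{d\mid B_k\}$ would not leave enough room in the remainder, so the sharp uniform theta-function estimate of Theorem \ref{unif.est.theta} is what makes the argument go through.
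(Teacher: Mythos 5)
The paper does not prove Theorem \ref{Bernoulli.quasi.p} in the text you were given: it is stated in Appendix \ref{appendix-3} as a recall from \cite{W1} (``Small divisors of Bernoulli sums''), so there is no in-paper proof to compare against. That said, your sieve-theoretic reconstruction is sound and is very likely close to what \cite{W1} does, since the key input you use --- the uniform $\Theta$-function estimate of Theorem \ref{unif.est.theta} together with the two-regime bound \eqref{Theta.dmu} --- is itself a result of \cite{W1}, recorded immediately before the statement precisely so that arguments of this type go through. Your bookkeeping is correct: with $D=k^{1/4}$ and $\zeta\le k^{c/\log\log k}$, one gets $u=\log D/\log\zeta\ge(4c)^{-1}\log\log k$, the remainder $\sum_{d\le D,\,d\mid\Pi_\zeta}|r_d|$ is dominated by at most $D$ terms each $O((\log k)^{5/2}k^{-3/2})$ (the exponential piece being negligible for $d\le k^{1/4}$), and choosing $c<1/4$ makes $(\log k)^{-1/(4c)}$ small enough to be absorbed into $1/\log^2\zeta$ across the whole range including the endpoint $\log\zeta\asymp\log k/\log\log k$. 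Two cosmetic remarks. First, the fundamental lemma is more naturally stated with error $O(e^{-u\log u})$ (or $u^{-u/2}$); you wrote $O(e^{-u})$, which is weaker but still tends to zero and still suffices, so nothing breaks. Second, it is exactly the fundamental lemma (rather than a fixed-truncation inclusion--exclusion) that lets you choose $D$ polynomially large in $k$ while $\zeta$ stays subpolynomial, which is what makes the $\zeta\le k^{c/\log\log k}$ range appear; your closing paragraph correctly identifies this as the crux and correctly identifies that a crude LLT for $\P\{d\mid B_k\}$ (error only $O(k^{-1/2})$) would not be summable over $d\le D$, so Theorem \ref{unif.est.theta} is genuinely needed.
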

  As $\P\big\{   B_n  \ \hbox{is prime} \big\}\le \P\big\{ P^-( B_n ) > \zeta \big\}$, the following corollary is immediate.
\begin{corollary}\label{Bnprime.ubound}  There exists an absolute constant $C_1$, such that for all $n$ large enough,
\begin{equation*} 
\P\big\{   B_n  \ \hbox{is prime} \big\}\le C_1\,    \frac{   \log\log n }{   c  \log  n}.
\end{equation*}
The constant $c$ is the same as in  Theorem \ref{Bernoulli.quasi.p}.\end{corollary}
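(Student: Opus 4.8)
The plan is to read the corollary off Theorem \ref{Bernoulli.quasi.p} almost verbatim; the only point requiring a moment's care is that the event $\{B_n\ \text{prime}\}$ forces $P^-(B_n)$ to be large only when $B_n$ is itself large, so one must first discard the negligible event that $B_n$ is an atypically small prime.

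First I would record a concentration estimate for $B_n=\sum_{j=1}^n\varepsilon_j$, the Bernoulli sum, where the $\varepsilon_j$ are i.i.d.\ Bernoulli$(1/2)$. Applying Lemma \ref{md}(b) with $X_j=\varepsilon_j$, $\mu=\E B_n=n/2$ and deviation parameter $1/2$ gives $\P\{B_n\le n/4\}\le e^{-n/16}$. Since every admissible $\zeta$ in Theorem \ref{Bernoulli.quasi.p}, i.e.\ $\zeta_0\le\zeta\le n^{c/\log\log n}$, satisfies $\zeta=o(n)$, we get $\{B_n\le\zeta\}\subset\{B_n\le n/4\}$ and hence $\P\{B_n\le\zeta\}\le e^{-n/16}$ for all $n$ large enough.

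Next I would use the elementary inclusion: if $B_n$ is prime and $B_n>\zeta$, then the only prime divisor of $B_n$ is $B_n$ itself, so $P^-(B_n)=B_n>\zeta$; hence
\[
\{B_n\ \text{prime}\}\ \subset\ \{P^-(B_n)>\zeta\}\ \cup\ \{B_n\le\zeta\}.
\]
Combining this with the previous step and with Theorem \ref{Bernoulli.quasi.p}, for $n$ large and $\zeta_0\le\zeta\le n^{c/\log\log n}$,
\[
\P\{B_n\ \text{prime}\}\ \le\ \P\{P^-(B_n)>\zeta\}+e^{-n/16}\ \le\ \frac{e^{-\gamma}}{\log\zeta}+\frac{C_0}{\log^2\zeta}+e^{-n/16}\ \le\ \frac{C}{\log\zeta},
\]
with $C$ an absolute constant. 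Finally I would optimise by taking $\zeta$ as large as permitted, $\zeta=n^{c/\log\log n}$, so that $\log\zeta=\frac{c\log n}{\log\log n}$ and $\frac{1}{\log\zeta}=\frac{\log\log n}{c\log n}$; this yields $\P\{B_n\ \text{prime}\}\le C\,\frac{\log\log n}{c\log n}$ for all $n$ large, which is the claim with $C_1=C$. The argument has no genuine obstacle: all the substance is contained in Theorem \ref{Bernoulli.quasi.p}, which we may invoke, and the only subtlety — the failure of the naive inclusion $\{B_n\ \text{prime}\}\subset\{P^-(B_n)>\zeta\}$ when $B_n$ happens to be a small prime — is removed by the exponential concentration bound of the first step.
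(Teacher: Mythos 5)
Your proof is correct and follows essentially the same route as the paper: invoke Theorem \ref{Bernoulli.quasi.p} with $\zeta$ at the top of its admissible range, $\zeta=k^{c/\log\log k}$, so that $1/\log\zeta=\log\log k/(c\log k)$. The paper states the inclusion $\{B_n\ \text{prime}\}\subset\{P^-(B_n)>\zeta\}$ without qualification, which, as you observe, fails on the exponentially negligible event that $B_n$ is itself a prime $\le\zeta$; your extra concentration step correctly patches this minor gap, but the substance of the argument is identical.
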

By    using Borel-Cantelli Lemma  it follows that, along subsequences   of integers growing at least   exponentially,  $B_n$ is  ultimately {\it not} prime   with probability 1.  
   \vskip 10 pt
 The  collection of  variables ${\mathfrak D} 
  =\big\{ \chi\{d|B_n\},    \  n\ge 1, d\ge 1\big\}
 $   
further  forms a mixing system, that is   the correlation function 
  \begin{equation}\label{Delta.def}  {{ \Delta}}  \big( (d,n),  (\d, m)\big)   \,=\, \P\big\{ d|B_n\, ,\,   \d|B_m\big\}-\P \{ d|B_n \}\P \{    \d|B_m \},
 \end{equation} 
verifies for each     $d$ and $\d$, 
\begin{equation}\label{corr1a1}
   \lim_{m-n\to \infty }  {{ \Delta}}  \big( (d,n),  (\d, m)\big)\,=\, 0. 
\end{equation}
The second order theory of this system is thoroughly studied in  \cite{W4}.  Three zones of dependence, weak independence and strong independence   can be identified,
   according to the cases   $n<m\le n+n^c$,   $n+ n^{c} \le m\le 2n$ and   $m\ge 2n$, where  $0<c<1$.
 The corresponding correlation estimates are established  in \cite{W4}.

%%%%%%
%%%%%%

\end{document}